\newcommand{\M}{\mathcal{M}}
\renewcommand{\phi}{\varphi}
\newcommand{\BP}{\textbf{P}}
\newcommand{\lr}[1]{\langle #1 \rangle}
\renewcommand{\iff}{\Longleftrightarrow}
\newcommand{\lra}{\leftrightarrow}
\newcommand{\weg}[1]{}
\newcommand{\bis}{\mathrel{\mathchoice%
{\raisebox{.3ex}{$\,
  \underline{\makebox[.7em]{$\leftrightarrow$}}\,$}}%
{\raisebox{.3ex}{$\,
  \underline{\makebox[.7em]{$\leftrightarrow$}}\,$}}%
{\raisebox{.2ex}{$\,
  \underline{\makebox[.5em]{\scriptsize$\leftrightarrow$}}\,$}}%
{\raisebox{.2ex}{$\,
  \underline{\makebox[.5em]{\scriptsize$\leftrightarrow$}}\,$}}}}
\theoremstyle{definition}
\newtheorem{theorem}{Theorem}[section]
\newtheorem{lemma}[theorem]{Lemma}
\newtheorem{definition}[theorem]{Definition}
\newtheorem{fact}[theorem]{Fact}
\newtheorem{conjecture}[theorem]{Conjecture}
\newtheorem{proposition}[theorem]{Proposition}
\newtheorem{corollary}[theorem]{Corollary}
\title{Notes on neighborhood semantics for logics of unknown truths and false beliefs}
\author{Jie Fan\\
\small School of Humanities, University of Chinese Academy of Sciences  \\
\small \texttt{jiefan@ucas.ac.cn}}
\date{}
\begin{document}
\maketitle

\begin{abstract}
In this article, we study logics of unknown truths and false beliefs under neighborhood semantics. We compare the relative expressivity of the two logics. It turns out that they are incomparable over various classes of neighborhood models, and the combination of the two logics are equally expressive as standard modal logic over any class of neighborhood models. We propose morphisms for each logic, which can help us explore the frame definability problem, show a general soundness and completeness result, and generalize some results in the literature. We axiomatize the two logics over various classes of neighborhood frames. Last but not least, we extend the results to the case of public announcements, which has good applications to Moore sentences and some others.
\end{abstract}

\noindent Keywords: unknown truths, false beliefs, accident, neighborhood semantics, morphisms, axiomatizations, expressivity, frame definability, intersection semantics

\section{Introduction}

This paper studies logics of unknown truths and false beliefs under neighborhood semantics. Intuitively, if $p$ is true but you do not know that $p$, then you have an {\em unknown truth} that $p$; if $p$ is false but you believe that $p$, then you have a {\em false belief} that $p$, or you {\em are wrong about} $p$.


The notion of unknown truths is important in philosophy and formal epistemology. For instance, it is related to Verificationism, or `verification thesis'~\cite{jfak.fitch:2004}. Verificationism says that all truths can be known. However, from the thesis, the unknown truth of $p$, formalized $p\land \neg Kp$, gives us a consequence that all truths are actually known. In other words, the notion gives rise to a well-known counterexample to Verificationism. This is the so-called Fitch's `paradox of knowability'~\cite{fitch:1963}.\footnote{For an excellent survey on Fitch's paradox of knowability, we refer to~\cite{sep-fitch-paradox}.} To take another example: it gives rise to an important type of Moore sentences, which is essential to Moore's paradox, which says that one cannot claim the paradoxical sentence ``$p$ but I do not know it''~\cite{moore:1942,hintikka:1962}. It is known that such a Moore sentence is unsuccessful and self-refuting (see, e.g.~\cite{hollidayetal:2010,jfak.book:2011,hvdetal.del:2007}).\footnote{To say a formula $\phi$ is {\em successful}, if it still holds after being announced, in symbol $\vDash[\phi]\phi$. Otherwise, we say this formula is {\em unsuccessful}. Moreover, to say $\phi$ is {\em self-refuting}, if its negation always holds after being announced, in symbol $\vDash[\phi]\neg\phi$.}


In addition to the axiomatization for the logic of unknown truths on topological semantics~\cite{steinsvold:2008}, there has been various work on the metaphysical counterpart of unknown truths --- accidental truths, or simply, `accident'. The notion of accidental truths traces back at least to Leibniz, in the name of `v\'{e}rit\'{e}s de fait' (factual truths), see e.g.~\cite{AD:1989A,Heinemann:1948}. This notion is related to problem of {\em future contingents}, which is formalized by a negative form of accident~\cite{Aristotle:1941}. Moreover, it is applied to reconstruct G\"{o}del's ontological argument (e.g.~\cite{Small:2001}), and also to provide an additional partial verification of the Boxdot Conjecture raised in~\cite{FH:2009} (also see~\cite{Steinsvold:2011}).


The logical investigation on the notion of accidental truths is initiated by Marcos, who axiomatizes a minimal logic of accident under relational semantics in~\cite{Marcos:2005}, to differentiate `accident' from `contingency'.\footnote{As for a recent survey on (non)contingency logic, we refer to~\cite{fan2019symmetric}.} The axiomatization is then simplified and its various extensions are presented in~\cite{Steinsvold:2008a}. Symmetric accident logic is axiomatized in~\cite{Fan:2015accident}, and Euclidean accident logic is explored in~\cite{BalbianiFan:2017}.\footnote{In fact, \cite{BalbianiFan:2017} gave a complete axiomatization for strong noncontingency logic $\mathcal{L}(\blacktriangle)$ over the class of Euclidean frames, thereby answering an open question posed in~\cite{Fan:2019strongnoncontingency}. However, since as shown in~\cite{Fan:2019strongnoncontingency}, $\mathcal{L}(\blacktriangle)$ is equally expressive as accident logic over the class of arbitrary models, thus one can translate the axiomatization of Euclidean strong noncontingency logic into an axiomatization of Euclidean accident logic.} Some quite general soundness and completeness results can be found in~\cite{GilbertVenturi:2016}. Some relative expressivity results are obtained in~\cite{Marcos:2005,Fan:2015accident}.

In comparison, the notion of false beliefs is popular in the area of cognitive science, see e.g.~\cite{perner1987three,wellman2001meta}. For technical reasons, \cite{Steinsvold:falsebelief} proposes a logic that has the operator $W$ as a sole modality. There, $W\phi$ is read ``the agent is wrong about $\phi$'', and being wrong about $\phi$ means believing $\phi$ though $\phi$ is false. Complete axiomatizations of the minimal logic of false belief and its various extensions are given, and some results of frame definability are presented.





 However, all this work are based on relational semantics. As the logics of unknown truths and false beliefs are non-normal (due to the non-normality of their modalities), it is then natural and interesting to investigate them from the perspective of neighborhood semantics.

Neighborhood semantics is independently proposed by Scott and Montague in 1970~\cite{Scott:1970,Montague:1970}. Since it is introduced, neighborhood semantics has become a standard semantics for investigating non-normal modal logics~\cite{Chellas1980}. Partly inspired by~\cite{FanvD:neighborhood}, the authors of~\cite{GilbertVenturi:2017} proposes neighborhood semantics for logics of unknown truths and false beliefs. According to the semantics, ``it is an unknown truth that $\phi$'' is interpreted as ``$\phi$ is true and the proposition expressed by $\phi$ is not a neighborhood of the evaluated state'', and ``it is a false belief that $\phi$'' as ``$\phi$ is false and the proposition expressed by $\phi$ is a neighborhood of the evaluated state''. Beyond some invariance and negative results, a minimal logic of unknown truths under relational semantics, denoted ${\bf B_K}$ there, is shown to be sound and complete with respect to the class of filters, and a minimal logic of false beliefs under relational semantics, denoted ${\bf A_K}$ therein, is shown to be sound and complete with respect to the class of neighborhood frames that are closed under binary intersections and are negatively supplemented.

In this paper, in addition to explore the relative expressivity of logics of unknown truths and false beliefs over various classes of neighborhood models, we also axiomatize logics of unknown truths and false beliefs over various neighborhood frames. By defining notions of $\bullet$-morphisms and $W$-morphisms, we obtain good applications to, e.g. frame (un)definability, a general soundness and completeness result, and some results that generalize those in~\cite{GilbertVenturi:2017} in a relative easy way. Moreover, we extend the results to the case of public announcements: by adopting the intersection semantics in the literature (which is a kind of neighborhood semantics for public announcements), we find suitable reduction axioms and thus complete proof systems, which, again, gives us good applications to some interesting questions. For instance, are Moore sentences self-refuting? How about the negation of Moore sentences? Are false beliefs of a fact successful formulas? Other natural questions also result, for instance, are all unknown truths themselves unknown truths? Are all false beliefs themselves are false beliefs?

As we will show in a proof-theoretical way, interestingly, under fairly weak assumption (namely, monotonicity), one's false belief of a fact cannot be removed even after being told: if you have a false belief, then after someone tells you this, you still have the false belief. In other words, false beliefs of facts are all successful formulas. Different from the case in relational semantics, under neighborhood semantics, Moore sentences are {\em not} self-refuting in general. But the negation of Moore sentences are successful in the presence of monotonicity. Also, all unknown truths themselves unknown truths, but not all false beliefs themselves are false beliefs, indeed, {\em none} of false beliefs themselves are false beliefs.

The reminder of the paper is organized as follows. After reviewing the languages and their neighborhood semantics and some common neighborhood properties (Sec.~\ref{sec.syntaxandsemantics}), we compare the relative expressivity of the languages in Sec.~\ref{sec.expressivity}. Sec.~\ref{sec.morphisms} proposes notions of $\bullet$-morphisms and $W$-morphisms and exploit their applications. Sec.~\ref{sec.axiomatizations} axiomatizes the logics over various classes of neighborhood frames, which include a general soundness and completeness result shown via the notion of $W$-morphisms. Sec.~\ref{sec.publicannouncements} extends the previous results to the case of public announcements, where by using intersection semantics for public announcements, we find suitable reduction axioms and complete axiomatizations, which gives us good applications to Moore sentences and some others. We conclude with some future work in Sec.~\ref{sec.conclusion}.
\section{Syntax and Semantics}\label{sec.syntaxandsemantics}

Throughout this paper, we fix a nonempty set of propositional variables $\BP$ and $p\in\BP$.

\begin{definition} The languages involved in the current paper include the following. 
\[
\begin{array}{ll}
\mathcal{L}(\bullet)& \phi::=p\mid \neg\phi\mid \phi\land\phi\mid \bullet\phi\\
\mathcal{L}(W) & \phi::=p\mid \neg\phi\mid \phi\land\phi\mid W\phi\\
\mathcal{L}(\bullet,W)&\phi::=p\mid \neg\phi\mid \phi\land\phi\mid \bullet\phi\mid W\phi\\
\mathcal{L}(\Box)&\phi::=p\mid \neg\phi\mid \phi\land\phi\mid \Box\phi\\
\end{array}
\]

$\mathcal{L}(\bullet)$ is the {\em language of the logic of unknown truths}, $\mathcal{L}(W)$ is the {\em language of the logic of false beliefs}, $\mathcal{L}(\bullet,W)$ is the {\em language of the logic of unknown truths and false beliefs}, and $\mathcal{L}(\Box)$ is the {\em language of epistemic/doxastic logic}.
\end{definition}

Intuitively, $\bullet\phi$ is read ``it is an {\em unknown truth} that $\phi$'', that is, ``$\phi$ is true but unknown'', $W\phi$ is read ``the agent {\em is wrong about} $\phi$'', or ``it is a {\em false belief} that $\phi$'', that is, ``$\phi$ is false but believed'', and $\Box\phi$ is read ``it is known/believed that $\phi$''. Other connectives are defined as usual; in particular, $\circ\phi$ is abbreviated as $\neg\bullet\phi$, read ``it is known that $\phi$ once it is the case that $\phi$''. In a philosophical context, $\bullet\phi$, $\circ\phi$, and $\Box\phi$ are read ``it is accident (or accidentally true) that $\phi$'', ``it is essential that $\phi$'', and ``it is necessary that $\phi$'', respectively.

All the above-mentioned languages are interpreted over neighborhood models.

\begin{definition}
A (neighborhood) model is a triple $\M=\lr{S,N,V}$ such that, $S$ is a nonempty set of states (or called `possible worlds'), $N$ is a neighborhood function from $S$ to $\mathcal{P}(\mathcal{P}(S))$, and $V$ is a valuation function. Intuitively, $X\in N(s)$ means that $X$ is a neighborhood of $s$. For any neighborhood model $\M$ and state $s$ in $\M$, $(\M,s)$ is called a {\em pointed (neighborhood) model}. Without considering the valuation function, we obtain a (neighborhood) frame.
\end{definition}

Given a neighborhood model $\M=\lr{S,N,V}$ and a state $s\in S$, the semantics of the aforementioned languages is defined inductively as follows.
\[
\begin{array}{|lll|}
\hline
\M,s\vDash p&\iff & s\in V(p)\\
\M,s\vDash \neg\phi &\iff &\M,s\nvDash\phi\\
\M,s\vDash \phi\land\psi &\iff& \M,s\vDash\phi\text{ and }\M,s\vDash\psi\\
\M,s\vDash\bullet\phi&\iff& s\in\phi^{\M}\text{ and }\phi^{\M}\notin N(s)\\
\M,s\vDash W\phi&\iff &\phi^\M\in N(s)\text{ and }s\notin \phi^\M\\
\M,s\vDash\Box\phi&\iff& \phi^{\M}\in N(s)\\
\hline
\end{array}
\]
Where $\phi^{\M}=\{s\in\M\mid \M,s\vDash\phi\}$.

It is easily computed that
\[
\begin{array}{lll}
\M,s\vDash\circ\phi&\iff& s\in \phi^\M\text{ implies }\phi^{\M}\in N(s).\\
\end{array}
\]

\weg{Recall that the standard knowledge/necessity operator is interpreted in the following.
\[
\begin{array}{lll}
\M,s\vDash\Box\phi&\iff& \phi^{\M}\in N(s)\\
\end{array}
\]}

Thus one may easily verify that $\vDash\bullet\phi\lra (\phi\land\neg\Box\phi)$, $\vDash W\phi\lra \Box\phi\land\neg\phi$, $\vDash\circ\phi\lra (\phi\to\Box\phi)$, which conform to the previous readings of $\bullet\phi$, $W\phi$, $\circ\phi$, respectively. This indicates that the modalities $\bullet$, $W$, $\circ$ are all definable in the standard modal logic $\mathcal{L}(\Box)$, and therefore $\mathcal{L}(\Box)$ is at least as expressive as $\mathcal{L}(\bullet)$ and also $\mathcal{L}(W)$ over any class of neighborhood models.

The neighborhood properties which we mainly focus on in this paper include the following.
\begin{definition}[Neighborhood properties] Let $\mathcal{F}=\lr{S,N}$ be a neighborhood frame, and $\M$ be a neighborhood model based on $\mathcal{F}$. For each $s\in S$ and $X,Y\subseteq S$:
\begin{enumerate}
\item[$(m)$] $N(s)$ is {\em supplemented}, or {\em closed under supersets}, if $X\in N(s)$ and $X\subseteq Y$ implies $Y\in N(s)$. In this case, we also say that $N(s)$ is {\em monotone}.
\item[$(c)$] $N(s)$ is {\em closed under (binary) intersections}, if $X\in N(s)$ and $Y\in N(s)$ implies $X\cap Y\in N(s)$.
\item[$(n)$] $N(s)$ {\em contains the unit}, if $S\in N(s)$.
\item[$(r)$] $N(s)$ {\em contains its core}, if $\bigcap N(s)\in N(s)$.
\weg{\item[(u)] $X\notin N(s)$ implies $s\in X$.
\item[(f)] $X\in N(s)$ implies $s\notin X$.}
\end{enumerate}

The function $N$ possesses such a property, if $N(s)$ has the property for all $s\in S$; $\mathcal{F}$ has a property, if $N$ has. Frame $\mathcal{F}$ is a filter, if $\mathcal{F}$ has $(m)$, $(c)$ and $(n)$; $\mathcal{F}$ is augmented, if $\mathcal{F}$ has $(m)$ and $(r)$. Model $\M$ has a property, if $\mathcal{F}$ has such a property.
\end{definition}

It is known that every augmented model is a filter, but not vice versa (see e.g.~\cite{Chellas1980}).
\section{Expressivity}\label{sec.expressivity}

This part compares the relative expressivity of $\mathcal{L}(\bullet)$ and $\mathcal{L}(W)$. To begin with, we give the definition of expressivity.

\begin{definition}[Expressivity] Let $\mathcal{L}_1$ and $\mathcal{L}_2$ be two logical languages that are interpreted in the same class $\mathbb{M}$ of models,
\begin{itemize}
\item $\mathcal{L}_2$ is {\em at least as expressive as} $\mathcal{L}_1$, notation: $\mathcal{L}_1\preceq\mathcal{L}_2$, if for each formula $\phi$ in $\mathcal{L}_1$, there exists a formula $\psi$ in $\mathcal{L}_2$ such that for each model $\M$ in $\mathbb{M}$, for each state $s$ in $\M$, we have that $\M,s\vDash\phi$ iff $\M,s\vDash\psi$.
\item $\mathcal{L}_1$ is {\em less expressive than} $\mathcal{L}_2$, notation: $\mathcal{L}_1\prec\mathcal{L}_2$, if $\mathcal{L}_1\preceq\mathcal{L}_2$ and $\mathcal{L}_2\not\preceq\mathcal{L}_1$.
\item $\mathcal{L}_1$ and $\mathcal{L}_2$ are {\em equally expressive}, if $\mathcal{L}_1\preceq\mathcal{L}_2$ and $\mathcal{L}_2\preceq \mathcal{L}_1$.
\item $\mathcal{L}_1$ and $\mathcal{L}_2$ are {\em incomparable (in expressivity)}, if $\mathcal{L}_1\not\preceq\mathcal{L}_2$ and $\mathcal{L}_2\not\preceq \mathcal{L}_1$.
\end{itemize}
\end{definition}

The following two propositions state that the languages $\mathcal{L}(\bullet)$ and $\mathcal{L}(W)$ are incomparable over any model classes with the above neighborhood properties.
\begin{proposition}\label{prop.exp-lcirc-lw}
On the class of all models, the $(m)$-models, the $(c)$-models, the $(n)$-models, the $(r)$-models, $\mathcal{L}(\bullet)$ is not at least as expressive as $\mathcal{L}(W)$.
\end{proposition}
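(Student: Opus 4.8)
The plan is to use the standard model-comparison method for separating expressive power: I will exhibit two pointed neighborhood models $(\M_1,s)$ and $(\M_2,s)$ that satisfy exactly the same formulas of $\mathcal{L}(\bullet)$ but disagree on some formula of $\mathcal{L}(W)$. Since then every $\mathcal{L}(\bullet)$-formula receives the same truth value at the two points while the witnessing $\mathcal{L}(W)$-formula does not, no $\mathcal{L}(\bullet)$-formula can be equivalent to it over the class, giving $\mathcal{L}(W)\not\preceq\mathcal{L}(\bullet)$. To cover all five classes at once I will ensure both models lie in every class under consideration; it suffices to make each an augmented filter, since by the fact recalled in the excerpt an augmented model is a filter and so, together with $(r)$ from the definition of augmented, enjoys all of $(m),(c),(n),(r)$.

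The guiding idea is the asymmetry visible in the semantic clauses: $\bullet\phi$ can hold at $s$ only when $\phi$ holds at $s$, so the clause for $\bullet$ at $s$ only ever consults $N(s)$ on sets that contain $s$; dually, $W\phi$ holds at $s$ only when $\phi$ fails at $s$, so $W$ is precisely sensitive to neighborhoods that do not contain $s$. Accordingly I would take $S=\{s,t\}$ with the shared valuation $V(p)=\{s\}$ (all other variables empty in both models), and let the two models differ only in whether the set $\{t\}$ (which does not contain $s$) is a neighborhood of $s$. Concretely, set $N_1(s)=\{\{t\},S\}$ and $N_2(s)=\{S\}$, and $N_1(t)=N_2(t)=\{\{t\},S\}$. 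Each of these collections is the upward closure of its core (generated by $\{t\}$, by $S$, and by $\{t\}$, respectively), hence each $N_i$ is augmented; so this single pair witnesses the claim over each of the five classes simultaneously.

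Next I would prove by induction on $\phi\in\mathcal{L}(\bullet)$ that $\phi^{\M_1}=\phi^{\M_2}$. The atomic and Boolean cases are immediate from the shared valuation. For $\phi=\bullet\psi$ the induction hypothesis yields a single set $X:=\psi^{\M_1}=\psi^{\M_2}$; since the two models agree on $N(t)$, the extensions agree at $t$, and at $s$ the clause for $\bullet$ requires $s\in X$, in which case $X\neq\{t\}$, so the sole difference between $N_1(s)$ and $N_2(s)$ is never consulted and membership of $X$ agrees in both models. Hence $(\bullet\psi)^{\M_1}=(\bullet\psi)^{\M_2}$, and in particular $(\M_1,s)$ and $(\M_2,s)$ agree on all of $\mathcal{L}(\bullet)$. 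Finally, the formula $W\neg p$ separates them: $(\neg p)^{\M_i}=\{t\}$ does not contain $s$ and lies in $N_1(s)$ but not in $N_2(s)$, so $\M_1,s\vDash W\neg p$ while $\M_2,s\nvDash W\neg p$.

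The one genuinely delicate point is reconciling two requirements that pull in opposite directions: the models must differ, so that some $W$-formula tells them apart, yet both must remain in every listed class, including the monotone class $(m)$, which tends to propagate a single difference across many sets. The resolution is to locate the difference entirely among sets excluding $s$ and to realise each neighborhood collection as the upward closure of a single set; monotonicity (and the remaining properties) is then automatic, and because $\bullet$ at $s$ only inspects sets containing $s$, the difference stays invisible to $\mathcal{L}(\bullet)$. Once the models are fixed in this way, the induction is routine.
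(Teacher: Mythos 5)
Your proposal is correct and follows essentially the same route as the paper: two pointed models on $\{s,t\}$, lying in all five classes, that differ only in whether the set $\{t\}$ (a set not containing $s$) is a neighborhood of $s$, distinguished by a $W$-formula whose extension is $\{t\}$, together with an induction on $\mathcal{L}(\bullet)$ exploiting that the clause for $\bullet$ at $s$ only ever consults neighborhoods containing $s$. The only differences (swapped valuation, the separating formula $W\neg p$ in place of $Wp$, and a slightly enriched $N(t)$) are cosmetic.
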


\begin{proof}
Consider the following models, where the only difference is $N'(s)=N(s)\cup\{\{t\}\}$, and an arrow from a state $x$ to a set $X$ means that $X$ is a neighborhood of $x$:
$$
\xymatrix{&&&&&&&&\{t\}\\
\M&s:\neg p\ar[r]&\{s,t\}&t:p\ar[l]&&\M'&s:\neg p\ar[r]\ar[urr]&\{s,t\}&t:p\ar[l]}
$$

It may be easily checked that both $\M$ and $\M'$ have $(m)$, $(c)$, $(n)$ and $(r)$.

Moreover, $(\M,s)$ and $(\M',s)$ can be distinguished by an $\mathcal{L}(W)$-formula: on the one hand, as $p^\M=\{t\}\notin N(s)$, we have $\M,s\nvDash Wp$; on the other hand, since $\M',s\nvDash p$ and $p^{\M'}=\{t\}\in N'(s)$, we infer that $\M,s\vDash Wp$.

However, these two pointed models cannot be distinguished by any $\mathcal{L}(\bullet)$-formulas. For this, we show a stronger result that for all $\phi\in\mathcal{L}(\bullet)$, for all $x\in S$, $\M,x\vDash\phi$ iff $\M',x\vDash\phi$, that is, $\phi^\M=\phi^{\M'}$. As the two models differs only in the neighborhood of $s$, it suffices to show that $\M,s\vDash\phi$ iff $\M',s\vDash\phi$, that is, $s\in \phi^\M$ iff $s\in \phi^{\M'}$. The proof goes with induction on $\phi$, where the only case to treat is $\bullet\phi$.

To begin with, suppose that $\M,s\vDash\bullet\phi$, then $s\in\phi^\M$ and $\phi^\M\notin N(s)$. By induction hypothesis, $s\in \phi^{\M'}$ and $\phi^{\M'}\notin N(s)$. Since $s\in \phi^{\M'}$, it must be the case that $\phi^{\M'}\neq \{t\}$, that is, $\phi^{\M'}\notin \{\{t\}\}$, and thus $\phi^{\M'}\notin N(s)\cup\{\{t\}\}=N'(s)$. Therefore, $\M',s\vDash\bullet\phi$.

Conversely, assume that $\M',s\vDash\bullet\phi$, then $s\in \phi^{\M'}$ and $\phi^{\M'}\notin N'(s)$. As $N(s)\subseteq N'(s)$, by induction hypothesis, we infer that $s\in \phi^\M$ and $\phi^{\M}\notin N(s)$. Therefore, $\M,s\vDash\bullet\phi$.

Therefore, $\mathcal{L}(W)\not\preceq\mathcal{L}(\bullet)$.
\end{proof}

\begin{proposition}\label{prop.exp-lw-lcirc}
On the class of all models, the $(m)$-models, the $(c)$-models,  $(n)$-models, the $(r)$-models, $\mathcal{L}(W)$ is not at least as expressive as $\mathcal{L}(\bullet)$.
\end{proposition}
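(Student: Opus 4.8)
The plan is to mirror the construction used in Proposition~\ref{prop.exp-lcirc-lw}, swapping the roles of $\bullet$ and $W$. The key observation is that $\bullet\phi$ is sensitive only to the extensions of formulas that are \emph{true} at the evaluated state (since it requires $s\in\phi^\M$), whereas $W\phi$ is sensitive only to the extensions of formulas that are \emph{false} there (since it requires $s\notin\phi^\M$). So I would build two models differing exactly by adding to $N(s)$ a set that \emph{contains} $s$ and is the extension of a propositional letter; such a change is detectable by $\bullet$ but invisible to $W$.

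Concretely, take $S=\{s,t\}$ with $V(p)=\{s\}$, and set $N(x)=\{S\}$ for every state $x$ in $\M$. Let $\M'$ agree with $\M$ everywhere except $N'(s)=N(s)\cup\{\{s\}\}=\{\{s\},S\}$. First I would verify that both $\M$ and $\M'$ enjoy $(m)$, $(c)$, $(n)$, and $(r)$; this is a routine check, since $\{S\}$ and $\{\{s\},S\}$ are both filters whose cores (namely $S$ and $\{s\}$, respectively) belong to them. Hence the single pair of models witnesses the failure of expressivity across all the listed classes at once.

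Next I would exhibit the distinguishing $\mathcal{L}(\bullet)$-formula $\bullet p$. In $\M$ we have $s\in p^\M=\{s\}$ while $\{s\}\notin N(s)$, so $\M,s\vDash\bullet p$; in $\M'$ the set $\{s\}=p^{\M'}$ now lies in $N'(s)$, so $\M',s\nvDash\bullet p$. Finally I would show, by induction on $\psi\in\mathcal{L}(W)$, that $\psi^\M=\psi^{\M'}$; since the two models differ only at $s$, it suffices to treat the modal case $W\psi$ at $s$. By the induction hypothesis $\psi^\M=\psi^{\M'}$, and whenever $W\psi$ could hold we have $s\notin\psi^\M$, hence $\psi^\M\neq\{s\}$; therefore $\psi^\M\in N(s)$ iff $\psi^\M\in N'(s)$, so $W\psi$ has the same truth value in both models. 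This yields $\mathcal{L}(\bullet)\not\preceq\mathcal{L}(W)$.

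The only subtle point, and the step I expect to require the most care, is choosing the added neighborhood to be the extension of a \emph{true} atom (a set containing $s$): this is exactly what makes $\bullet p$ flip while keeping every $W$-formula inert, and it is the precise dual of the original proof, where the added set $\{t\}$ excluded $s$ and so fooled $\bullet$ while being seen by $W$.
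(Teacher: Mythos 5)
Your proof is correct and is essentially the paper's own argument: you use the same pair of models (your $N(x)=\{S\}$ with $S=\{s,t\}$ is exactly the paper's $N(s)=N(t)=\{\{s,t\}\}$, and $N'(s)=N(s)\cup\{\{s\}\}$), the same distinguishing formula $\bullet p$, and the same induction on $\mathcal{L}(W)$-formulas exploiting that $s\notin\psi^\M$ forces $\psi^\M\neq\{s\}$. The only difference is presentational: the paper splits the $W\psi$ case into two implications (one via $N(s)\subseteq N'(s)$, one via $\psi^{\M'}\neq\{s\}$), which your single argument compresses correctly.
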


\begin{proof}
Consider the following models, where the only difference is that $N'(s)=N(s)\cup\{\{s\}\}$:
$$
\xymatrix{&&&&&&\{s\}&&\\
\M&s:p\ar[r]&\{s,t\}&t:\neg p\ar[l]&&\M'&s:p\ar[r]\ar[u]&\{s,t\}&t:\neg p\ar[l]}
$$

One may check that $\M$ and $\M'$ both have $(m)$, $(c)$, $(n)$ and $(r)$.

One the one hand, $(\M,s)$ and $(\M',s)$ can be distinguished by an $\mathcal{L}(\bullet)$-formula, just noticing that $\M,s\vDash\bullet p$ (as $\M,s\vDash p$ but $p^\M=\{s\}\notin N(s)$) and $\M',s\nvDash\bullet p$ (since $p^{\M'}=\{s\}\in N'(s)$).

On the other hand, $(\M,s)$ and $(\M',s)$ cannot be distinguished by any $\mathcal{L}(W)$-formulas. For this, we prove a stronger result that for all $\phi\in\mathcal{L}(W)$, for all $x\in S$, $\M,x\vDash\phi$ iff $\M',x\vDash\phi$, that is, $\phi^\M=\phi^{\M'}$. As the two models differs only in the neighborhood of $s$, it is sufficient to demonstrate that $\M,s\vDash\phi$ iff $\M',s\vDash\phi$. The proof continues with induction on $\phi$, in which the only case to fix is $W\phi$.

First, suppose that $\M,s\vDash W\phi$, then $\phi^\M\in N(s)$ and $s\notin \phi^\M$. Since $N(s)\subseteq N'(s)$, by induction hypothesis, we can obtain that $\phi^{\M'}\in N'(s)$ and $s\notin \phi^{\M'}$, and thus $\M',s\vDash W\phi$.

For the other direction, assume that $\M',s\vDash W\phi$, then $\phi^{\M'}\in N'(s)$ and $s\notin \phi^{\M'}$. As $s\notin \phi^{\M'}$, it must be the case that $\phi^{\M'}\neq \{s\}$, that is, $\phi^{\M'}\notin \{\{s\}\}$. Thus $\phi^{\M'}\in N(s)$. By induction hypothesis, we infer that $\phi^\M\in N(s)$ and $s\notin\phi^\M$, therefore $\M,s\vDash W\phi$.

Therefore, $\mathcal{L}(\bullet)\not\preceq \mathcal{L}(W)$.
\end{proof}

The following result follows immediately from Prop.~\ref{prop.exp-lcirc-lw} and Prop.~\ref{prop.exp-lw-lcirc}.
\begin{corollary}
On the class of all models, the $(m)$-models, the $(c)$-models, the $(n)$-models, the $(r)$-models, $\mathcal{L}(\bullet)$ and $\mathcal{L}(W)$ are incomparable, and thus both logics are less expressive than $\mathcal{L}(\bullet,W)$.
\end{corollary}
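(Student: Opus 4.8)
The plan is to read both assertions straight off the two preceding propositions, using only the definition of expressivity, the evident syntactic inclusions, and transitivity of $\preceq$. For the incomparability claim I would simply unfold the definition: two languages are incomparable exactly when neither is at least as expressive as the other. Proposition~\ref{prop.exp-lcirc-lw} supplies $\mathcal{L}(W)\not\preceq\mathcal{L}(\bullet)$ and Proposition~\ref{prop.exp-lw-lcirc} supplies $\mathcal{L}(\bullet)\not\preceq\mathcal{L}(W)$, each over every class in the list, so incomparability of $\mathcal{L}(\bullet)$ and $\mathcal{L}(W)$ follows immediately on each of those classes.

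For the second claim I would first record the two trivial inclusions $\mathcal{L}(\bullet)\preceq\mathcal{L}(\bullet,W)$ and $\mathcal{L}(W)\preceq\mathcal{L}(\bullet,W)$: every $\mathcal{L}(\bullet)$-formula and every $\mathcal{L}(W)$-formula already \emph{is} an $\mathcal{L}(\bullet,W)$-formula, so the identity translation witnesses these inclusions on any model class. It then remains to establish the strict parts $\mathcal{L}(\bullet,W)\not\preceq\mathcal{L}(\bullet)$ and $\mathcal{L}(\bullet,W)\not\preceq\mathcal{L}(W)$. The key move is to invoke transitivity of $\preceq$, which is itself immediate by composing translations. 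Suppose toward a contradiction that $\mathcal{L}(\bullet,W)\preceq\mathcal{L}(\bullet)$ over one of the classes; combining this with $\mathcal{L}(W)\preceq\mathcal{L}(\bullet,W)$ and transitivity yields $\mathcal{L}(W)\preceq\mathcal{L}(\bullet)$, contradicting Proposition~\ref{prop.exp-lcirc-lw}. Hence $\mathcal{L}(\bullet,W)\not\preceq\mathcal{L}(\bullet)$, and together with the inclusion above this gives $\mathcal{L}(\bullet)\prec\mathcal{L}(\bullet,W)$. The argument for $\mathcal{L}(W)\prec\mathcal{L}(\bullet,W)$ is symmetric, using Proposition~\ref{prop.exp-lw-lcirc} in place of Proposition~\ref{prop.exp-lcirc-lw}. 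Alternatively one may argue concretely: the pair $(\M,\M')$ of Proposition~\ref{prop.exp-lcirc-lw} agrees on all $\mathcal{L}(\bullet)$-formulas yet is separated by $Wp\in\mathcal{L}(\bullet,W)$, directly witnessing $\mathcal{L}(\bullet,W)\not\preceq\mathcal{L}(\bullet)$, and dually the pair of Proposition~\ref{prop.exp-lw-lcirc} is separated by $\bullet p$ for the other case.

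Since every step is a one-line consequence of a definition or of an already-proved proposition, I expect no genuine obstacle. The only point requiring a moment's care is not to conflate the incomparability of $\mathcal{L}(\bullet)$ and $\mathcal{L}(W)$ with the strictness of their inclusions into $\mathcal{L}(\bullet,W)$: the latter is not formally entailed by the former alone and must be obtained through the transitivity argument (or, equivalently, through the explicit separating model pairs), and one should check that both routes go through uniformly for each of the classes of all models, $(m)$-, $(c)$-, $(n)$-, and $(r)$-models, which they do because both propositions already hold on all of them.
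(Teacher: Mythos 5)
Your proof is correct and matches the paper's intent: the paper gives no explicit proof, stating only that the corollary "follows immediately" from Propositions~\ref{prop.exp-lcirc-lw} and~\ref{prop.exp-lw-lcirc}, and your argument (incomparability by definition, strictness via the trivial syntactic inclusions plus transitivity of $\preceq$, or equivalently the explicit separating model pairs) is exactly the deduction being left implicit. No gap.
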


The result below states that $\mathcal{L}(\bullet,W)$ is equally expressive as $\mathcal{L}(\Box)$ over any class of neighborhood models. This extends the result in~\cite{Fan:2019sane}, where it is shown that the two logics are equally expressive over any class of relational models.
\begin{proposition}
$\mathcal{L}(\bullet,W)$ is equally expressive as $\mathcal{L}(\Box)$ on any class of neighborhood models.
\end{proposition}

\begin{proof}
Since $\vDash\bullet\phi\lra \phi\land\neg\Box\phi$ and $\vDash W\phi\lra \Box\phi\land\neg\phi$, we have $\mathcal{L}(\bullet,W)\preceq \mathcal{L}(\Box)$.

Moreover, we demonstrate that $\vDash \Box\phi\lra W\phi\vee(\circ\phi\land\phi)$, as follows. Given any neighborhood model $\M=\lr{S,N,V}$ and $s\in S$, we have the following equivalences:
\[\begin{array}{ll}
&\M,s\vDash W\phi\vee(\circ\phi\land\phi)\\
\iff &\M,s\vDash W\phi\text{ or }\M,s\vDash \circ\phi\land\phi\\
\iff &(\phi^\M\in N(s)\text{ and }\M,s\nvDash\phi)\text{ or }(\M,s\vDash\circ\phi\text{ and }\M,s\vDash\phi)\\
\iff &(\phi^\M\in N(s)\text{ and }\M,s\nvDash\phi)\text{ or }((\M,s\vDash\phi\text{ implies }\phi^\M\in N(s))\text{ and }\M,s\vDash\phi)\\
\iff &(\phi^\M\in N(s)\text{ and }\M,s\nvDash\phi)\text{ or }(\M,s\vDash\phi\text{ and }\phi^\M\in N(s))\\
\iff &\phi^M\in N(s)\\
\iff &\M,s\vDash\Box\phi.
\end{array}\]

This implies that $\mathcal{L}(\Box)\preceq \mathcal{L}(\bullet,W)$, and therefore $\mathcal{L}(\bullet,W)$ is equally expressive as $\mathcal{L}(\Box)$ on any class of neighborhood models.
\end{proof}

\section{Morphisms and their applications}\label{sec.morphisms}

This section proposes notions of morphisms for $\mathcal{L}(\bullet)$ and $\mathcal{L}(W)$, and some of their applications.

\subsection{$\bullet$-morphisms}

\weg{\section{Bisimulations}

The notion of bisimulation

Recall that the structural equivalence notion for $\mathcal{L}(\Box)$ between neighborhood models is called `behavioral equivalence'~\cite[Def.~3.1]{Hansenetal:2009}, which is defined via the notion of neighborhood morphisms~\cite[Def.~7]{Bakhtiarietal:2017}, also called `bounded morphisms' in~\cite[Def.~2.5]{Hansenetal:2009}.

\begin{definition}[Neighborhood morphism]
Let $\M=\lr{S,N,V}$ and $\M'=\lr{S',N',V'}$ be neighborhood models. A function $f:S\to S'$ is said to be a {\em neighborhood morphism} from $\M$ to $\M'$, if it satisfies the following conditions:
\begin{itemize}
\item[(a)] for all $p\in \BP$ and $s\in S$, $s\in V(p)$ iff $f(s)\in V'(p)$;
\item[(b)] for all $X\subseteq S'$, $f^{-1}[X]\in N(s)$ iff $X\in N'(f(s))$.
\end{itemize}
\end{definition}

\begin{definition}
Two pointed neighborhood models $(\M_1,s_1)$ and $(\M_2,s_2)$ are {\em behaviorally equivalent}, notation: $(\M_1,s_1)\bis_b(\M_2,s_2)$, if there exist a neighborhood model $\mathcal{N}$ and neighborhood morphisms $f_i:\M_i\to \mathcal{N}$ for $i=1,2$ such that $f_1(s_1)=f_2(s_2)$. Sometimes we simply write $s_1\bis_bs_2$.
\end{definition}

\begin{proposition}\label{prop.bis_b}
Let $\M_1=\lr{S_1,N_1,V_1}$ and $\M_2=\lr{S_2,N_2,V_2}$ be neighborhood models and $s_1\in S_1$, $s_2\in S_2$. If $(\M_1,s_1)\bis_b(\M_2,s_2)$, then: $s_1\in V_1(p)$ iff $s_2\in V_2(p)$ for all $p\in\BP$, and there exist a neighborhood model $\mathcal{N}=\lr{S,N,V}$ and neighborhood morphisms $f_i:\M_i\to \mathcal{N}$ for $i=1,2$ such that $f_1^{-1}[X]\in N_1(s_1)$ iff $f_2^{-1}[X]\in N_2(s_2)$ for all $X\subseteq S$.
\end{proposition}

\begin{proof}
Suppose that $(\M_1,s_1)\bis_b(\M_2,s_2)$. Then there exist a neighborhood model $\mathcal{N}=\lr{S,N,V}$ and neighborhood morphisms $f_i:\M_i\to \mathcal{N}$ for $i=1,2$ such that $f_1(s_1)=f_2(s_2)$. Since $f_1$ is a neighborhood morphism from $\M_1$ to $\mathcal{N}$, we have
\begin{itemize}
\item[(a1)] for all $p\in \BP$ and $x_1\in S_1$, $x_1\in V_1(p)$ iff $f_1(x_1)\in V(p)$;
\item[(b1)] for all $X\subseteq S$ and $x_1\in S_1$, $f_1^{-1}[X]\in N_1(x_1)$ iff $X\in N(f_1(x_1))$.
\end{itemize}
Letting $x_1=s_1$, we obtain $s_1\in V_1(p)$ iff $f_1(s_1)\in V(p)$ for all $p\in\BP$, and $f_1^{-1}[X]\in N_1(s_1)$ iff $X\in N(f_1(s_1))$ for all $X\subseteq S$.

Similarly, as $f_2$ is a neighborhood morphism from $\M_2$ to $\mathcal{N}$, we have
\begin{itemize}
\item[(a2)] for all $p\in \BP$ and $x_2\in S_2$, $x_2\in V_2(p)$ iff $f_2(x_2)\in V(p)$;
\item[(b2)] for all $X\subseteq S$ and $x_2\in S_2$, $f_2^{-1}[X]\in N_2(x_2)$ iff $X\in N(f_2(x_2))$.
\end{itemize}
Setting $x_1=s_2$, we obtain $s_2\in V_2(p)$ iff $f_2(s_2)\in V(p)$ for all $p\in\BP$, and $f_2^{-1}[X]\in N_2(s_2)$ iff $X\in N(f_2(s_2))$ for all $X\subseteq S$.

Now that $f_1(s_1)=f_2(s_2)$, we derive that $s_1\in V_1(p)$ iff $s_2\in V_2(p)$ for all $p\in\BP$, and $f_1^{-1}[X]\in N_1(s_1)$ iff $f_2^{-1}[X]\in N_2(s_2)$ for all $X\subseteq S$.
\end{proof}}

\begin{definition}[$\bullet$-Morphisms]
Let $\M=\lr{S,N,V}$ and $\M'=\lr{S',N',V'}$ be neighborhood models. A function $f:S\to S'$ is a {\em $\bullet$-morphism} from $\M$ to $\M'$, if for all $s\in S$,
\begin{itemize}
\item[(Var)] $s\in V(p)$ iff $f(s)\in V(p)$ for all $p\in \BP$;
\item[($\bullet$-Mor)] for all $X\subseteq S$, $[s\in X\text{ and }X\notin N(s)]\iff [f(s)\in f[X]\text{ and }f[X]\notin N'(f(s))].$
\end{itemize}

We say that $\M'$ is a $\bullet$-morphic image of $\M$, if there is a surjective $\bullet$-morphism from $\M$ to $\M'$.
\end{definition}

\weg{\begin{definition}[$\circ$-Morphisms]
Let $\M=\lr{S,N,V}$ and $\M'=\lr{S',N',V'}$ be neighborhood models. A function $f:S\to S'$ is a {\em $\circ$-morphism} from $\M$ to $\M'$, if for all $s\in S$,
\begin{itemize}
\item[(Var)] $s\in V(p)$ iff $f(s)\in V(p)$ for all $p\in \BP$;
\item[($\circ$-Mor)] for all $X\subseteq S$, $[s\notin X\text{ or }X\in N(s)]\iff [f(s)\notin f[X]\text{ or }f[X]\in N'(f(s))].$
\end{itemize}

We say that $\M'$ is a $\circ$-morphic image of $\M$, if there is a surjective $\circ$-morphism from $\M$ to $\M'$.
\end{definition}}

The following result indicates that the formulas of $\mathcal{L}(\bullet)$ are invariant under $\bullet$-morphisms.

\begin{proposition}\label{prop.circ-morphism}
Let $\M=\lr{S,N,V}$ and $\M'=\lr{S',N',V'}$ be neighborhood models, and let $f$ be a $\bullet$-morphism from $\M$ to $\M'$. Then for all $s\in S$, for all $\phi\in\mathcal{L}(\bullet)$, we have $\M,s\vDash\phi\iff \M',f(s)\vDash\phi$, that is, $f[\phi^\M]=\phi^{\M'}$.
\end{proposition}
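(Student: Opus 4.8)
The plan is to prove the invariance $\M,s\vDash\phi\iff \M',f(s)\vDash\phi$ by induction on the structure of $\phi\in\mathcal{L}(\bullet)$. The Boolean cases ($p$, $\neg\phi$, $\phi\land\psi$) are routine: the atomic case is exactly condition (Var), and negation and conjunction follow immediately from the induction hypothesis together with the observation that the semantic clauses for these connectives are defined pointwise. So the entire substance of the argument lies in the modal case $\bullet\phi$, where I expect to use condition ($\bullet$-Mor) together with the induction hypothesis in its set-theoretic form $f[\phi^\M]=\phi^{\M'}$.

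For the modal case, I would unfold the semantics: $\M,s\vDash\bullet\phi$ means $s\in\phi^\M$ and $\phi^\M\notin N(s)$. The goal is to show this is equivalent to $f(s)\in\phi^{\M'}$ and $\phi^{\M'}\notin N'(f(s))$, which is $\M',f(s)\vDash\bullet\phi$. The natural move is to instantiate the clause ($\bullet$-Mor) at the particular set $X=\phi^\M$. This gives
\[
[s\in\phi^\M\text{ and }\phi^\M\notin N(s)]\iff[f(s)\in f[\phi^\M]\text{ and }f[\phi^\M]\notin N'(f(s))].
\]
The left-hand side is exactly $\M,s\vDash\bullet\phi$. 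For the right-hand side, I would invoke the induction hypothesis $f[\phi^\M]=\phi^{\M'}$ to rewrite $f[\phi^\M]$ as $\phi^{\M'}$, turning the right-hand side into $f(s)\in\phi^{\M'}$ and $\phi^{\M'}\notin N'(f(s))$, which is precisely $\M',f(s)\vDash\bullet\phi$. Chaining these equivalences closes the case.

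The main obstacle — and the point deserving care — is the correct application of the induction hypothesis. The induction hypothesis must be stated in the strong set-level form $f[\phi^\M]=\phi^{\M'}$ rather than merely the pointwise form $\M,s\vDash\phi\iff\M',f(s)\vDash\phi$, because the clause ($\bullet$-Mor) quantifies over the image $f[X]$ of an arbitrary subset $X$, and here $X=\phi^\M$ forces us to identify $f[\phi^\M]$ with $\phi^{\M'}$. Establishing $f[\phi^\M]=\phi^{\M'}$ itself requires a small argument: the inclusion $f[\phi^\M]\subseteq\phi^{\M'}$ follows from the pointwise direction of the induction hypothesis, but the reverse inclusion $\phi^{\M'}\subseteq f[\phi^\M]$ requires that $f$ be surjective, so that every point of $\phi^{\M'}$ has a preimage (which by the induction hypothesis lies in $\phi^\M$). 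I would therefore want to track the surjectivity assumption carefully; if the proposition as stated does not presuppose surjectivity of $f$, one should either fold it into the definition of $\bullet$-morphism or restrict to $\bullet$-morphic images, since the equality $f[\phi^\M]=\phi^{\M'}$ — and hence the clean instantiation $X=\phi^\M$ — genuinely depends on it.
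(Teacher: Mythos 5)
Your proof is essentially the paper's own: induction on $\phi$, with the only nontrivial case $\bullet\phi$ handled by instantiating ($\bullet$-Mor) at $X=\phi^\M$ and converting $f[\phi^\M]$ into $\phi^{\M'}$ via the induction hypothesis taken in its set-level form $f[\phi^\M]=\phi^{\M'}$.

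However, the surjectivity issue you flag at the end is a genuine gap, and it afflicts the paper's own proof as well, which performs the same identification of $f[\phi^\M]$ with $\phi^{\M'}$ without comment. As you observe, the pointwise induction hypothesis only yields $f[\phi^\M]=\phi^{\M'}\cap f[S]$, and already at the base case one only gets $f[V(p)]=V'(p)\cap f[S]$. In fact the proposition as stated is \emph{false} for non-surjective $f$: take $\M=\lr{S,N,V}$ with $S=\{s\}$, $N(s)=\emptyset$, $V(p)=\{s\}$, and $\M'=\lr{S',N',V'}$ with $S'=\{s',t'\}$, $N'(s')=\{\{s',t'\}\}$, $N'(t')=\emptyset$, $V'(p)=\{s',t'\}$ (all other variables false everywhere), and let $f(s)=s'$. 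Then (Var) holds, and ($\bullet$-Mor) holds for both subsets of $S$: for $X=\emptyset$ both sides are false, and for $X=\{s\}$ both sides are true, since $s\in\{s\}$, $\{s\}\notin N(s)$, $f(s)\in f[\{s\}]=\{s'\}$ and $\{s'\}\notin N'(s')$. Nevertheless $\M,s\vDash\bullet p$ (as $s\in p^\M=\{s\}\notin N(s)$), while $\M',f(s)\nvDash\bullet p$ (as $p^{\M'}=\{s',t'\}\in N'(s')$). So invariance genuinely requires $f$ to be surjective, and your proposed repair --- adding surjectivity to the hypothesis, or equivalently stating the result for $\bullet$-morphic images --- is exactly what is needed; with it, your proof (and the paper's) goes through. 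Note that the paper's later applications of this proposition (Prop.~\ref{prop.equivalence-circ}, Coro.~\ref{coro.frameequivalent-circ}, and the analogous $W$-morphism results) only ever apply it to the identity map on $S$, which is surjective, so no downstream result is affected.
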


\begin{proof}
By induction on $\phi$. The nontrivial case is $\bullet\phi$.

Suppose that $\M,s\vDash\bullet\phi$, to show that $\M',f(s)\vDash\bullet\phi$. By supposition, $s\in \phi^\M$ and $\phi^\M\notin N(s)$. By ($\bullet$-Mor), we have that $f(s)\in f[\phi^\M]$ and $f[\phi^\M]\notin N'(f(s))$. By induction hypothesis, this means that $f(s)\in \phi^{\M'}$ and $\phi^{\M'}\notin N'(f(s))$. Thus $\M',f(s)\vDash\bullet\phi$.

Conversely, assume that $\M',f(s)\vDash\bullet\phi$, to prove that $\M,s\vDash\bullet\phi$. By assumption, $f(s)\in \phi^{\M'}$ and $\phi^{\M'}\notin N'(f(s))$. By induction hypothesis, this entails that $f(s)\in f[\phi^\M]$ and $f[\phi^\M]\notin N'(f(s))$. By ($\bullet$-Mor) again, we obtain that $s\in \phi^\M$ and $\phi^\M\notin N(s)$. Therefore, $\M,s\vDash\bullet\phi$.
\end{proof}

\weg{\begin{proposition}\label{prop.circ-morphism}
Let $\M=\lr{S,N,V}$ and $\M'=\lr{S',N',V'}$ be neighborhood models, and let $f$ be a $\circ$-morphism from $\M$ to $\M'$. Then for all $s\in S$, for all $\phi\in\mathcal{L}(\circ)$, we have $\M,s\vDash\phi\iff \M',f(s)\vDash\phi$, that is, $f[\phi^\M]=\phi^{\M'}$.
\end{proposition}

\begin{proof}
By induction on $\phi$. The nontrivial case is $\circ\phi$.

Suppose that $\M,s\vDash\circ\phi$, to show that $\M',f(s)\vDash\circ\phi$. By supposition, $s\notin \phi^\M$ or $\phi^\M\in N(s)$. By (Mor), we have that either $f(s)\notin f[\phi^\M]$ or $f[\phi^\M]\in N'(f(s))$. By induction hypothesis, this means that either $f(s)\notin \phi^{\M'}$ or $\phi^{\M'}\in N'(f(s))$. Thus $\M',f(s)\vDash\circ\phi$.

Conversely, assume that $\M',f(s)\vDash\circ\phi$, to prove that $\M,s\vDash\circ\phi$. By assumption, $f(s)\notin \phi^{\M'}$ or $\phi^{\M'}\in N'(f(s))$. By induction hypothesis, this entails that either $f(s)\notin f[\phi^\M]$ or $f[\phi^\M]\in N'(f(s))$. By (Mor) again, we obtain that either $s\notin \phi^\M$ or $\phi^\M\in N(s)$. Therefore, $\M,s\vDash\circ\phi$.
\end{proof}}

\weg{\subsection{Bisimulation for $\mathcal{L}(\circ)$}

\begin{definition}[nbh-$\circ$-bisimulation]
Let $\M=\lr{S,N,V}$ and $\M'=\lr{S',N',V'}$ be neighborhood models. A nonempty relation $Z$ is said to be an {\em (nbh-)$\circ$-bisimulation} between $\M$ and $\M'$, notation: $(\M,x)\bis_\circ(\M',x')$, if $xZx'$ implies
\begin{enumerate}
\item[(Var)] $x\in V(p)$ iff $x'\in V'(p)$ for all $p\in Var$.
\item[(Coh)] if $(X,X')$ is $Z$-coherent, then
$$[x\in X\text{ implies }X\in N(x)]\iff [x'\in X'\text{ implies }X'\in N'(x')].$$
\end{enumerate}
If there is an (nbh-)$\circ$-bisimulation $Z$ between $\M$ and $\M'$ such that $sZs'$, then we say that $(\M,s)$ and $(\M',s')$ are {\em (nbh-)$\circ$-bisimilar}, notation $(\M,s)\bis_\circ^\text{nbh}(\M',s')$. Sometimes we simply write $s\bis_\circ^\text{nbh}s'$.
\end{definition}

\begin{proposition}
Let $\M=\lr{S,N,V}$ and $\M'=\lr{S',N',V'}$ be neighborhood models and $s\in S$, $s'\in S'$. If $(\M,s)\bis_b(\M',s')$, then $(\M,s)\bis_\circ(\M',s')$.
\end{proposition}

\begin{proof}
Define $Z=\{(x,x')\mid x\bis_bx'\}$. Suppose that $xZx'$, then $x\bis_b x'$. By Prop.~\ref{prop.bis_b}, (Var) holds. For (Coh), suppose that $(X,X')$ is $Z$-coherent, then $x\notin X$ iff $x'\notin X'$. Moreover, there exist a neighborhood model $\mathcal{N}=\lr{S,N,V}$ and neighborhood morphisms $f_1:\M\to \mathcal{N}$ and $f_2:\M'\to\mathcal{N}$ such that $f_1^{-1}[Y]\in N(x)$ iff $f_2^{-1}[Y]\in N'(x')$ for all $Y\subseteq S$.
\end{proof}

\begin{proposition}[Invariance under $\bis_\circ$]\label{prop.inv-circ-bis}
Let $\M$ and $\M'$ be both neighborhood models, $s\in\M$, $s'\in\M'$. If $(\M,s)\bis_\circ(\M',s')$, then for every $\phi\in\mathcal{L}_\circ$, $\M,s\vDash\phi$ iff $\M',s'\vDash\phi$.
\end{proposition}

\begin{proof}
Suppose that $(\M,s)\bis_\circ(\M',s')$. We proceed by induction on $\phi$, where the only case to fix is $\circ\phi$. For this, we first show that $(\phi^\M,\phi^{\M'})$ is $\bis_\circ$-coherent. Suppose that $x\bis_\circ x'$. Then by induction hypothesis, we obtain that $\M,x\vDash\phi$ iff $\M',x'\vDash\phi$, that is, $x\in \phi^\M$ iff $x'\in \phi^{\M'}$. Then
\[
\begin{array}{ll}
&\M,s\vDash\circ\phi\\
\iff & s\in \phi^\M\text{ implies }\phi^\M\in N(s)\\
\stackrel{\text{(Coh)}}\iff & s'\in \phi^{\M'}\text{ implies }\phi^{\M'}\in N'(s')\\
\iff & \M',s'\vDash\circ\phi.\\
\end{array}
\]
\end{proof}

\begin{proposition}
$\bis_\circ$ is an equivalence relation.
\end{proposition}}

The notion of $\bullet$-morphisms can be applied to the following result in a relative easy way. Note that $\M^{t^+}$ and $\M^{t^-}$ defined on~\cite[p.~254]{GilbertVenturi:2017} are, respectively, the special cases of $\M^{t^+}$ and $\M^{t^-}$ defined below when $\Gamma_w=S_w$. Thus our result below is an extension of~\cite[Thm.~1.10]{GilbertVenturi:2017}.

\weg{\begin{proposition}\label{prop.equivalence-circ}
Let $\M=\lr{S,N,V}$. For each $w\in S$ and $\alpha\in\mathcal{L}(\circ)$, we have
$$\M,w\vDash\alpha\text{ iff }\M^{t^+},w\vDash\alpha$$
and
$$\M,w\vDash\alpha\text{ iff }\M^{t^-},w\vDash\alpha,$$
where $\M^{t^+}=\lr{S,N^{t^+},V}$ and $\M^{t^-}=\lr{S,N^{t^-},V}$, where $N^{t^+}(w)=N(w)\cup \Gamma_w$ and $N^{t^-}(w)=N(w)\backslash \Gamma_w$, in which $\Gamma_w\subseteq S_w=\{X\subseteq S\mid w\notin X\}$.
\end{proposition}

\begin{proof}
By Prop.~\ref{prop.circ-morphism}, it is sufficient to show that $f:S\to S$ such that $f(x)=x$ is a $\circ$-morphism from $\M$ to $\M^{t^+}$, and also a $\circ$-morphism from $\M$ to $\M^{t^-}$.

The condition (Var) is clear. For (Mor), we need to show that

$$[w\notin X\text{ or }X\in N(w)]\iff [w\notin X\text{ or }X\in N^{t^+}(w)]~~~~~~~~~~~~~~~~~~~~~~~~~(1)$$
and
$$[w\notin X\text{ or }X\in N(w)]\iff [w\notin X\text{ or }X\in N^{t^-}(w)]~~~~~~~~~~~~~~~~~~~~~~~~~(2).$$
The ``$\Longrightarrow$'' of (1) and ``$\Longleftarrow$'' of (2) follows directly from the fact that $N^{t^-}(w)\subseteq N(w)\subseteq N^{t^+}(w)$.

Moreover, if $w\in X$, then according to the definition of $S_w$, we have $X\notin S_w$, thus $X\notin \Gamma_w$. This follows that ``$\Longleftarrow$'' of (1) and ``$\Longrightarrow$'' of (2).
\end{proof}}

\begin{proposition}\label{prop.equivalence-circ}
Let $\M=\lr{S,N,V}$. For each $w\in S$ and $\alpha\in\mathcal{L}(\bullet)$, we have
$$\M,w\vDash\alpha\text{ iff }\M^{t^+},w\vDash\alpha$$
and
$$\M,w\vDash\alpha\text{ iff }\M^{t^-},w\vDash\alpha,$$
where $\M^{t^+}=\lr{S,N^{t^+},V}$ and $\M^{t^-}=\lr{S,N^{t^-},V}$, where $N^{t^+}(w)=N(w)\cup \Gamma_w$ and $N^{t^-}(w)=N(w)\backslash \Gamma_w$, in which $\Gamma_w\subseteq S_w=\{X\subseteq S\mid w\notin X\}$.
\end{proposition}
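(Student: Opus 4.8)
The plan is to reduce the statement to the invariance of $\mathcal{L}(\bullet)$-formulas under $\bullet$-morphisms, which is Prop.~\ref{prop.circ-morphism}. Since all three models $\M$, $\M^{t^+}$, $\M^{t^-}$ share the same domain $S$ and the same valuation $V$, the natural candidate to connect them is the identity map $f:S\to S$ given by $f(x)=x$. Thus it suffices to verify that $f$ is a $\bullet$-morphism from $\M$ to $\M^{t^+}$, and likewise from $\M$ to $\M^{t^-}$; once this is established, Prop.~\ref{prop.circ-morphism} immediately yields $\M,w\vDash\alpha$ iff $\M^{t^+},w\vDash\alpha$ (and the analogous equivalence for $\M^{t^-}$) for every $\alpha\in\mathcal{L}(\bullet)$ and every $w\in S$.

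The condition (Var) is immediate because $V$ is unchanged and $f$ is the identity. For ($\bullet$-Mor), using $f(s)=s$ and $f[X]=X$, I must establish, for all $X\subseteq S$, the two equivalences
$$[w\in X\text{ and }X\notin N(w)]\iff [w\in X\text{ and }X\notin N^{t^+}(w)]$$
and
$$[w\in X\text{ and }X\notin N(w)]\iff [w\in X\text{ and }X\notin N^{t^-}(w)].$$
One direction of each is cheap: from the inclusions $N^{t^-}(w)\subseteq N(w)\subseteq N^{t^+}(w)$ I obtain directly the ``$\Longleftarrow$'' of the first equivalence and the ``$\Longrightarrow$'' of the second.

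The crux---and the only place where the hypothesis $\Gamma_w\subseteq S_w$ is used---is the remaining direction of each equivalence, which hinges on a single observation: whenever $w\in X$, we have $X\notin S_w$ by definition of $S_w=\{X\subseteq S\mid w\notin X\}$, and hence $X\notin\Gamma_w$. From this, if $w\in X$ and $X\notin N(w)$, then $X\notin N(w)\cup\Gamma_w=N^{t^+}(w)$, which gives the ``$\Longrightarrow$'' of the first equivalence; and if $w\in X$ with $X\notin N^{t^-}(w)=N(w)\backslash\Gamma_w$, then, since $X\notin\Gamma_w$, we cannot have $X\in N(w)$, which gives the ``$\Longleftarrow$'' of the second equivalence.

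I do not anticipate a genuine obstacle here: the whole argument rests on the fact that $\bullet$-satisfaction at $w$ only ever queries whether a set \emph{containing} $w$ lies in the neighborhood, whereas $\Gamma_w$ touches only sets that \emph{exclude} $w$, so adding or deleting members of $\Gamma_w$ is invisible to $\mathcal{L}(\bullet)$. The one point demanding care is bookkeeping: keeping the roles of $N^{t^+}$ and $N^{t^-}$ straight so that the ``easy'' direction (coming from set inclusion) and the ``$\Gamma_w\subseteq S_w$'' direction are matched to the correct equivalence in each of the two cases.
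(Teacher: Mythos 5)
Your proposal is correct and follows essentially the same route as the paper's own proof: reduce to Prop.~\ref{prop.circ-morphism} via the identity map, dispatch one direction of each equivalence from the inclusions $N^{t^-}(w)\subseteq N(w)\subseteq N^{t^+}(w)$, and obtain the remaining directions from the observation that $w\in X$ implies $X\notin S_w$ and hence $X\notin\Gamma_w$. Your write-up in fact spells out the last two directions slightly more explicitly than the paper does; no gaps.
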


\begin{proof}
By Prop.~\ref{prop.circ-morphism}, it is sufficient to show that $f:S\to S$ such that $f(x)=x$ is a $\bullet$-morphism from $\M$ to $\M^{t^+}$, and also a $\bullet$-morphism from $\M$ to $\M^{t^-}$.

The condition (Var) is clear. For (Mor), we need to show that

$$[w\in X\text{ and }X\notin N(w)]\iff [w\in X\text{ and }X\notin N^{t^+}(w)]~~~~~~~~~~~~~~~~~~~~~~~~~(1)$$
and
$$[w\in X\text{ and }X\notin N(w)]\iff [w\in X\text{ and }X\notin N^{t^-}(w)]~~~~~~~~~~~~~~~~~~~~~~~~~(2).$$
The ``$\Longleftarrow$'' of (1) and ``$\Longrightarrow$'' of (2) follows directly from the fact that $N^{t^-}(w)\subseteq N(w)\subseteq N^{t^+}(w)$.

Moreover, given $w\in X$, according to the definition of $S_w$, we have $X\notin S_w$, thus $X\notin \Gamma_w$. This follows that ``$\Longrightarrow$'' of (1) and ``$\Longleftarrow$'' of (2).
\end{proof}

\weg{The notion of bisimulation can be applied to the following result. Note that $\M^{t^+}$ and $\M^{t^-}$ defined on~\cite[p.~254]{GilbertVenturi:2017} are, respectively, the special cases of $\M^{t^+}$ and $\M^{t^-}$ defined below when $\Gamma_w=S_w$. Thus our result below is an extension of~\cite[Thm.~1.10]{GilbertVenturi:2017}.

\begin{proposition}\label{prop.equivalence-circ}
Let $\M=\lr{S,N,V}$. For each $w\in S$ and $\alpha\in\mathcal{L}(\circ)$, we have
$$\M,w\vDash\alpha\text{ iff }\M^{t^+},w\vDash\alpha$$
and
$$\M,w\vDash\alpha\text{ iff }\M^{t^-},w\vDash\alpha,$$
where $\M^{t^+}=\lr{S,N^{t^+},V}$ and $\M^{t^+}=\lr{S,N^{t^+},V}$, where $N^{t^+}(w)=N(w)\cup \Gamma_w$ and $N^{t^-}(w)=N(w)\backslash \Gamma_w$, in which $\Gamma_w\subseteq S_w=\{X\subseteq S\mid w\notin X\}$.
\end{proposition}

\begin{proof}
By Prop.~\ref{prop.inv-circ-bis}, it is sufficient to show that $(\M,w)\bis_\circ(\M^{t^+},w)$ and $(\M,w)\bis_\circ(\M^{t^-},w).$ For this, we define $Z=\{(w,w)\mid w\in S\}$ and need to prove that $Z$ is a $\circ$-bisimulation between $\M$ and $\M^{t^+}$. Let $wZw$.  The condition (Var) is clear.

For (Coh), suppose that $(X,X')$ is $Z$-coherent. Then we must have $X=X'$: otherwise, there would be an $x\in S$ such that $x\in X$ but $x\notin X'$, contradicting the fact that $xZx$ and the supposition. It remains to demonstrate that
$$[w\notin X\text{ or }X\in N(w)]\iff [w\notin X\text{ or }X\in N^{t^+}(w)]~~~~~~~~~~~~~~~~~~~~~~~~~(1)$$
and
$$[w\notin X\text{ or }X\in N(w)]\iff [w\notin X\text{ or }X\in N^{t^-}(w)]~~~~~~~~~~~~~~~~~~~~~~~~~(2).$$
The ``$\Longrightarrow$'' of (1) and ``$\Longleftarrow$'' of (2) follows directly from the fact that $N^{t^-}(w)\subseteq N(w)\subseteq N^{t^+}(w)$.

Moreover, if $w\in X$, then according to the definition of $S_w$, we have $X\notin S_w$, thus $X\notin \Gamma_w$. This follows that ``$\Longleftarrow$'' of (1) and ``$\Longrightarrow$'' of (2).
\end{proof}
}

\weg{\begin{proposition}\cite[Thm.~1.10]{GilbertVenturi:2017}
Let $\M=\lr{S,N,V}$. For all $w\in S$ and $\alpha\in\mathcal{L}(\circ)$, we have
$$\M,w\vDash\alpha\text{ iff }\M^{t^+},w\vDash\alpha$$
and
$$\M,w\vDash\alpha\text{ iff }\M^{t^-},w\vDash\alpha,$$
where $\M^{t^+}=\lr{S,N^{t^+},V}$ and $\M^{t^+}=\lr{S,N^{t^+},V}$, where $N^{t^+}(w)=N(w)\cup S_w$ and $N^{t^-}(w)=N(w)\backslash S_w$, in which $S_w=\{X\subseteq S\mid w\notin X\}$.
\end{proposition}

\begin{proof}
By Prop.~\ref{prop.inv-circ-bis}, it is sufficient to show that $(\M,w)\bis_\circ(\M^{t^+},w)$ and $(\M,w)\bis_\circ(\M^{t^-},w).$ For this, we define $Z=\{(w,w)\mid w\in S\}$ and need to prove that $Z$ is a $\circ$-bisimulation between $\M$ and $\M^{t^+}$. Let $wZw$.  The condition (Var) is clear.

For (Coh), suppose that $(X,X')$ is $Z$-coherent. Then we must have $X=X'$: otherwise, there would be an $x\in S$ such that $x\in X$ but $x\notin X'$, contradicting the fact that $xZx$ and the supposition. It remains to demonstrate that
$$[w\notin X\text{ or }X\in N(w)]\iff [w\notin X\text{ or }X\in N^{t^+}(w)]~~~~~~~~~~~~~~~~~~~~~~~~~(1)$$
and
$$[w\notin X\text{ or }X\in N(w)]\iff [w\notin X\text{ or }X\in N^{t^-}(w)]~~~~~~~~~~~~~~~~~~~~~~~~~(2).$$
The ``$\Longrightarrow$'' of (1) and ``$\Longleftarrow$'' of (2) follows directly from the fact that $N^{t^-}(w)\subseteq N(w)\subseteq N^{t^+}(w)$.

Moreover, if $w\in X$, then according to the definition of $S_w$, we have $X\notin S_w$. Then ``$\Longleftarrow$'' of (1) and ``$\Longrightarrow$'' of (2) obtain.
\end{proof}}

Note that in the above proposition, as $\Gamma_w$ is defined in terms of $w$, thus given any two points $x,y\in S$, $\Gamma_x$ may be different from $\Gamma_y$. This point will be used frequently in the proofs below.

Now coming back to Prop.~\ref{prop.exp-lcirc-lw}, instead of directly proving that $\mathcal{L}(\bullet)$-formulas cannot distinguish between $(\M,s)$ and $(\M',s)$, we can resort to Prop.~\ref{prop.equivalence-circ}, by just noticing that $\M'=\M^{t^+}$ where $\Gamma_s=\{\{t\}\}$ and $\Gamma_t=\emptyset$.\footnote{Note that $\Gamma_s$ is an arbitrary subset of $S_s$, and $\{t\}\in S_s$ (as $s\notin \{t\}$), we thus can set $\Gamma_s=\{\{t\}\}$. Similar arguments apply for $\Gamma_t$ and other similar definitions of $\Gamma_s$ and $\Gamma_t$ in other situations below.}

With Prop.~\ref{prop.equivalence-circ}, we immediately have the following corollary, which extends the result in~\cite[Coro.~1.11]{GilbertVenturi:2017}.
\begin{corollary}\label{coro.frameequivalent-circ}
Let $\mathcal{F}=\lr{S,N}$, and $\mathcal{F}^{t^+}=\lr{S,N^{t^+}}$ and $\mathcal{F}^{t^-}=\lr{S,N^{t^-}}$ be defined as in Prop.~\ref{prop.equivalence-circ}. Then for all $\phi\in\mathcal{L}(\bullet)$, we have
$$\mathcal{F}\vDash\phi\text{ iff }\mathcal{F}^{t^+}\vDash\phi$$
and
$$\mathcal{F}\vDash\phi\text{ iff }\mathcal{F}^{t^-}\vDash\phi.$$
\end{corollary}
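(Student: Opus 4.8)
The plan is to derive this frame-level equivalence directly from the model-level Proposition~\ref{prop.equivalence-circ} by unwinding the definition of frame validity. Recall that $\F\vDash\phi$ holds precisely when $\lr{S,N,V},w\vDash\phi$ for every valuation $V$ on $S$ and every $w\in S$, and analogously $\F^{t^+}\vDash\phi$ holds precisely when $\lr{S,N^{t^+},V},w\vDash\phi$ for every such $V$ and $w$ (and similarly for $\F^{t^-}$). So the whole task is to transfer a pointwise equivalence that already holds for each individual model across the universal quantifier over valuations.

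The key observation is that $N^{t^+}$ and $N^{t^-}$ are defined purely in terms of $N$ and the fixed family $\{\Gamma_w\}_{w\in S}$, with no reference whatsoever to a valuation. Consequently, as $V$ ranges over all valuations on $S$, the model $\lr{S,N,V}$ ranges over exactly the models based on $\F$, while $\lr{S,N^{t^+},V}$ ranges over exactly the models based on $\F^{t^+}$; moreover the assignment $V\mapsto\lr{S,N^{t^+},V}$ pairs each model on $\F$ with the model on $\F^{t^+}$ sharing the same valuation. This pairing is exactly the correspondence $\M\mapsto\M^{t^+}$ appearing in Proposition~\ref{prop.equivalence-circ}.

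Carrying this out, I would fix an arbitrary valuation $V$ and an arbitrary $w\in S$ and apply Proposition~\ref{prop.equivalence-circ} to the model $\M=\lr{S,N,V}$, obtaining $\lr{S,N,V},w\vDash\phi$ iff $\lr{S,N^{t^+},V},w\vDash\phi$ for every $\phi\in\mathcal{L}(\bullet)$. Since this holds uniformly for all choices of $V$ and $w$, universally quantifying both sides yields $\F\vDash\phi$ iff $\F^{t^+}\vDash\phi$. The identical argument, invoking the $t^-$ half of Proposition~\ref{prop.equivalence-circ}, gives $\F\vDash\phi$ iff $\F^{t^-}\vDash\phi$.

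There is no genuine obstacle in this argument; the one point that deserves explicit mention is the valuation-independence of $\Gamma_w$, which is what guarantees that the correspondence $V\mapsto\lr{S,N^{t^+},V}$ is a bijection between the models on the two frames and hence that the universal quantifier over valuations transfers cleanly. Once this is noted, the result is immediate from Proposition~\ref{prop.equivalence-circ}.
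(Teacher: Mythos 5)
Your proposal is correct and follows exactly the route the paper intends: the paper states this corollary as an immediate consequence of Prop.~\ref{prop.equivalence-circ}, and your argument simply makes explicit the routine step of unwinding frame validity over all valuations and states, using the fact that the $t^+$/$t^-$ modification of $N$ is valuation-independent so that $\lr{S,N^{t^+},V}=(\lr{S,N,V})^{t^+}$. Nothing is missing; this is the same proof, spelled out.
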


It turns out that this corollary is quite useful in exploring the problem of frame (un)definability of $\mathcal{L}(\bullet)$. A frame property $P$ is said to be definable in a language $\mathcal{L}$, if there exists a set $\Theta$ of formulas in $\mathcal{L}$ such that $\mathcal{F}\vDash\Theta$ iff $\mathcal{F}$ has $P$. When $\Theta=\{\phi\}$, we write simply $\phi$ rather than $\{\phi\}$.

To demonstrate the undefinability of a frame property $P$ in a language $\mathcal{L}$, we (only) need to construct two frames such that one of them has $P$ but the other fails, and any $\mathcal{L}$-formula is valid on one frame if and only if it is also valid on the other. The argument is as follows: were $P$ defined by a set of $\mathcal{L}$-formulas $\Theta$, we would derive that $\mathcal{F}\vDash\Theta$ iff $\mathcal{F}$ has $P$. As any $\mathcal{L}$-formula is valid on one frame if and only if it is also valid on the other, this also applies to the set $\Theta$. This implies that one frame has $P$ iff the other also has, which is a contradiction.

\begin{proposition}\label{prop.undefinable-cr}
The frame properties $(c)$ and $(r)$ are undefinable in $\mathcal{L}(\bullet)$.
\end{proposition}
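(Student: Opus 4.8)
The plan is to apply the undefinability technique described in the paragraph just above the proposition, using Corollary~\ref{coro.frameequivalent-circ} as the engine that guarantees two frames validate exactly the same $\mathcal{L}(\bullet)$-formulas. Concretely, I would exhibit a single frame $\mathcal{F}$ enjoying both $(c)$ and $(r)$ together with a $t^+$-variant $\mathcal{F}^{t^+}$ that fails both, so that one pair of frames settles both undefinability claims at once.

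For the base frame, take $S=\{s,t,u\}$ and put $N(x)=\{S\}$ for every $x\in S$. Then $\mathcal{F}$ has $(c)$, since $S\cap S=S\in N(x)$, and it has $(r)$, since $\bigcap N(x)=S\in N(x)$. Now choose $\Gamma_s=\{\{t\},\{u\}\}$ and $\Gamma_t=\Gamma_u=\emptyset$; note that $\{t\},\{u\}\in S_s$ because $s\notin\{t\}$ and $s\notin\{u\}$, so this is a legitimate instance of the construction in Prop.~\ref{prop.equivalence-circ}. The resulting frame $\mathcal{F}^{t^+}$ then has $N^{t^+}(s)=\{S,\{t\},\{u\}\}$ and $N^{t^+}(t)=N^{t^+}(u)=\{S\}$.

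Next I would verify that $\mathcal{F}^{t^+}$ violates both properties at $s$. For $(c)$: $\{t\},\{u\}\in N^{t^+}(s)$ but $\{t\}\cap\{u\}=\emptyset\notin N^{t^+}(s)$. For $(r)$: $\bigcap N^{t^+}(s)=S\cap\{t\}\cap\{u\}=\emptyset\notin N^{t^+}(s)$. Hence $\mathcal{F}^{t^+}$ has neither $(c)$ nor $(r)$, whereas $\mathcal{F}$ has both. Finally, by Corollary~\ref{coro.frameequivalent-circ} we have $\mathcal{F}\vDash\phi$ iff $\mathcal{F}^{t^+}\vDash\phi$ for every $\phi\in\mathcal{L}(\bullet)$; running the argument from the paragraph preceding the proposition with this pair of frames yields the undefinability of $(c)$ and of $(r)$ in $\mathcal{L}(\bullet)$.

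The only genuinely delicate point is the choice of $S$ and $\Gamma_s$: the $t^+$-operation may only add sets that exclude $s$, so to destroy closure under intersection one needs two neighborhoods whose intersection again excludes $s$, and this forces at least three worlds (with two worlds the sole nonempty member of $S_s$ is $\{t\}$, and $\{t\}\cap\{t\}=\{t\}$, so $t^+$ can never break $(c)$). I would also be careful to keep every $N(x)$ nonempty in the base frame, so that the value of $\bigcap N(x)$ is unambiguous and $(r)$ genuinely holds there; taking $N(x)=\{S\}$ throughout handles this cleanly.
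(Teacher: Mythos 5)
Your proof is correct, and it runs on exactly the same engine as the paper's: exhibit a pair $\mathcal{F}$, $\mathcal{F}^{t^+}$ that disagree on $(c)$ and $(r)$, then invoke Corollary~\ref{coro.frameequivalent-circ} to conclude they validate the same $\mathcal{L}(\bullet)$-formulas. The only difference is the instantiation: the paper uses a two-world frame run in the opposite direction --- there the base frame $\mathcal{F}$ (with $N(s)=N(t)=\{\{s\},\{t\}\}$) \emph{fails} $(c)$ and $(r)$, and the variant $\mathcal{F}^{t^+}$ obtained with $\Gamma_s=\Gamma_t=\{\emptyset\}$ \emph{has} both. Since the corollary gives a biconditional, either direction is equally good.

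One side remark in your writeup is false, although it does not affect the validity of your proof: three worlds are not forced. Your necessity argument only considers intersections of two \emph{added} sets, but $(c)$ can just as well be destroyed by intersecting an added set with an \emph{original} neighborhood containing $s$. For instance, take $S=\{s,t\}$ with $N(s)=\{\{s\}\}$ and $N(t)=\{\{t\}\}$; this frame has $(c)$ and $(r)$. Choosing $\Gamma_s=\{\{t\}\}$ and $\Gamma_t=\emptyset$ (legitimate, as $s\notin\{t\}$) yields $N^{t^+}(s)=\{\{s\},\{t\}\}$, which fails $(c)$ because $\{s\}\cap\{t\}=\emptyset\notin N^{t^+}(s)$, and fails $(r)$ because $\bigcap N^{t^+}(s)=\emptyset\notin N^{t^+}(s)$. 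So a two-world example works in your direction too.
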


\begin{proof}
Consider the following frames:
$$
\xymatrix{&\{s\}&&\{t\}&&&\{s\}&&\{t\}\\
\mathcal{F}&s\ar[u]\ar[urr]&&t\ar[u]\ar[ull]&&\mathcal{F}'&s\ar[u]\ar[urr]\ar[r]&\emptyset&t\ar[u]\ar[ull]\ar[l]\\}
$$

From the above figure, we can see that $\mathcal{F}'$ possesses $(c)$ and $(r)$ but $\mathcal{F}$ does not, since $\{s\}\in N(s)$ and $\{t\}\in N(s)$ but $\{s\}\cap \{t\}=\emptyset\notin N(s)$.

Moreover, one may easily verify that $\mathcal{F}'=\mathcal{F}^{t^+}$ in which $\Gamma_s=\Gamma_t=\{\emptyset\}$, then by Coro.~\ref{coro.frameequivalent-circ}, we obtain that for all $\phi\in\mathcal{L}(\bullet)$, we have $\mathcal{F}\vDash\phi$ iff $\mathcal{F}'\vDash\phi$.
\end{proof}

\weg{\begin{proposition}
The frame properties $(m)$ and $(r)$ are undefinable in $\mathcal{L}(\bullet)$.
\end{proposition}

\begin{proof}
Consider the following frames:
$$
\xymatrix{&\{s\}&&&&&\{s\}&&\emptyset\\
\mathcal{F}&s\ar[u]\ar[r]&\{s,t\}&t\ar[l]&&\mathcal{F}'&s\ar[u]\ar[urr]\ar[r]&\{s,t\}&t\ar[u]\ar[ull]\ar[l]\\}
$$

At first, $\mathcal{F}$ has $(r)$ but $\mathcal{F}'$ does not, since $\{s\}\in N'(s)$ and $\{t\}\in N'(s)$ but $\emptyset\notin N'(s)$; secondly, $\mathcal{F}'$ has $(m)$ whereas $\mathcal{F}$ does not, because $\{s\}\in N(s)$ but $\{s,t\}\notin N(s)$ although $\{s\}\subseteq \{s,t\}$.

Besides, $\mathcal{F}'=\mathcal{F}^{t^+}$ where $\Gamma_s=\{\{t\}\}$ and $\Gamma_t=\{\{s\}\}$. Then by Prop.~\ref{coro.frameequivalent-circ}, we derive that $\mathcal{F}\vDash\phi$ iff $\mathcal{F}'\vDash\phi$ for all $\phi\in\mathcal{L}(\circ)$.
\end{proof}}

\begin{proposition}\label{prop.undefinable-m}
The frame property $(m)$ is undefinable in $\mathcal{L}(\bullet)$.
\end{proposition}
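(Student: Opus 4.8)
The plan is to reuse the recipe established for Proposition~\ref{prop.undefinable-cr}: to witness the undefinability of $(m)$ in $\mathcal{L}(\bullet)$, I would exhibit two frames $\mathcal{F}$ and $\mathcal{F}'$ on the same carrier such that $\mathcal{F}'$ has $(m)$ while $\mathcal{F}$ does not, yet $\mathcal{F}'=\mathcal{F}^{t^+}$ for a family $\{\Gamma_w\}$ with each $\Gamma_w\subseteq S_w$. Corollary~\ref{coro.frameequivalent-circ} then yields $\mathcal{F}\vDash\phi$ iff $\mathcal{F}'\vDash\phi$ for every $\phi\in\mathcal{L}(\bullet)$, which by the argument recorded just before Proposition~\ref{prop.undefinable-cr} rules out any defining set $\Theta\subseteq\mathcal{L}(\bullet)$ for $(m)$.

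For the construction I would take $S=\{s,t\}$ and put $N(s)=\{\emptyset,\{s\},\{s,t\}\}$, leaving $N(t)$ equal to some monotone family (e.g.\ $\emptyset$ or $\mathcal{P}(S)$). I set $\Gamma_s=\{\{t\}\}$ and $\Gamma_t=\emptyset$, so that $N'(s)=N(s)\cup\{\{t\}\}=\mathcal{P}(S)$ and $N'(t)=N(t)$; this is exactly $\mathcal{F}'=\mathcal{F}^{t^+}$. One then checks the two required facts: $\mathcal{F}'$ has $(m)$, because $N'(s)=\mathcal{P}(S)$ is trivially closed under supersets and $N'(t)$ was chosen monotone; and $\mathcal{F}$ fails $(m)$, witnessed by $\emptyset\in N(s)$ with $\emptyset\subseteq\{t\}$ but $\{t\}\notin N(s)$. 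Finally $\Gamma_s=\{\{t\}\}\subseteq S_s$ holds since $s\notin\{t\}$, so the hypotheses of Proposition~\ref{prop.equivalence-circ} and Corollary~\ref{coro.frameequivalent-circ} are met.

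The one point that needs care --- and the reason the example is forced to look the way it does --- is the side condition $\Gamma_w\subseteq S_w$: I am only allowed to toggle neighborhoods that do \emph{not} contain $w$. A failure of $(m)$ at $s$ requires a witnessing pair $X\subseteq Y$ with $X\in N(s)$ and $Y\notin N(s)$, and to create it by toggling I must remove (or withhold) such a $Y$ with $s\notin Y$; but then $X\subseteq Y$ forces $s\notin X$ as well, so on a two-element carrier the only available pair is $X=\emptyset$, $Y=\{t\}$. This is why $\emptyset$ must sit in $N(s)$, and why monotonicity of $\mathcal{F}'$ then inflates $N'(s)$ all the way to $\mathcal{P}(S)$, since any monotone family containing $\emptyset$ is all of $\mathcal{P}(S)$. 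Once this constraint is respected the verification is routine; the substantive work is simply arranging the toggled set to lie above a retained neighborhood while staying off the point $s$.
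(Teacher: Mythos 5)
Your proposal is correct and takes essentially the same approach as the paper: both exhibit a two-point frame and its $t^+$-modification (toggling a single set that avoids $s$), exactly one of which is monotone, and then invoke Corollary~\ref{coro.frameequivalent-circ} together with the standard undefinability argument recorded before Proposition~\ref{prop.undefinable-cr}. The only difference is cosmetic: the paper starts from the monotone frame with $N(s)=\{\{s\},\{s,t\}\}$ and adds $\emptyset$ to destroy $(m)$, whereas you start from the non-monotone $N(s)=\{\emptyset,\{s\},\{s,t\}\}$ and add $\{t\}$ to restore it.
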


\begin{proof}
Consider the following frames:
$$
\xymatrix{&\{s\}&&&&&\{s\}&&\emptyset\\
\mathcal{F}&s\ar[u]\ar[r]&\{s,t\}&t\ar[l]&&\mathcal{F}'&s\ar[u]\ar[urr]\ar[r]&\{s,t\}&t\ar[l]\\}
$$

One may check that $\mathcal{F}$ possesses $(m)$ whereas $\mathcal{F}'$ does not, since $\emptyset\in N'(s)$ but $\{t\}\notin N'(s)$ although $\emptyset\subseteq\{t\}$.


Besides, $\mathcal{F}'=\mathcal{F}^{t^+}$ where $\Gamma_s=\{\emptyset\}$ and $\Gamma_t=\emptyset$. Then by Coro.~\ref{coro.frameequivalent-circ}, we derive that $\mathcal{F}\vDash\phi$ iff $\mathcal{F}'\vDash\phi$ for all $\phi\in\mathcal{L}(\bullet)$.
\end{proof}

Although the properties of $(m)$, $(c)$ and $(r)$ are undefinable in $\mathcal{L}(\bullet)$, the property $(n)$ is definable in the language. This can be explained via Coro.~\ref{coro.frameequivalent-circ} as follows: since for all $w$ in $\M=\lr{S,N,V}$, $w$ must be in $S$, thus it must be the case that $S\notin \Gamma_w$, and this makes a suitable definition of $\Gamma_w$ in showing the undefinability as in Props.~\ref{prop.undefinable-cr} and~\ref{prop.undefinable-m} unavailable.

\begin{proposition}\label{prop.definable-n}
The frame property $(n)$ is definable in $\mathcal{L}(\bullet)$.
\end{proposition}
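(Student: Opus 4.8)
The plan is to exhibit a single $\mathcal{L}(\bullet)$-formula whose validity on a frame is equivalent to $(n)$, namely $\circ\top$, i.e.\ $\neg\bullet\top$, where $\top$ abbreviates $p\to p$ (any propositional tautology serves equally well). The guiding intuition is exactly the remark preceding the proposition: the undefinability arguments for $(c)$, $(r)$ and $(m)$ in Props.~\ref{prop.undefinable-cr} and~\ref{prop.undefinable-m} perturb $N(w)$ only by sets drawn from $S_w=\{X\mid w\notin X\}$, but the unit $S$ never belongs to any $S_w$, so membership of $S$ in $N(s)$ is a feature no such perturbation can alter. This suggests probing the unit $S$ directly, which is precisely what $\circ\top$ accomplishes.

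First I would compute the truth condition of $\circ\top$. The key observation is that $\top^\M=S$ in every model $\M=\lr{S,N,V}$, independently of the valuation $V$. Substituting this into the derived semantics for $\circ$ yields
$$\M,s\vDash\circ\top\iff(s\in S\text{ implies }S\in N(s)).$$
Since $s\in S$ holds automatically, this collapses to $\M,s\vDash\circ\top\iff S\in N(s)$. The crucial point is that the right-hand side refers only to $N(s)$ and $S$, so the truth value of $\circ\top$ at a state is insensitive to the choice of valuation; this is what distinguishes $(n)$ from the properties treated above.

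Then I would read off frame definability. Because validity on a frame quantifies over all models and all states, and the truth of $\circ\top$ at $s$ depends only on the frame-level datum $S\in N(s)$, we obtain
$$\mathcal{F}\vDash\circ\top\iff\text{for all }s\in S,\ S\in N(s)\iff\mathcal{F}\text{ has }(n).$$
Taking $\Theta=\{\circ\top\}$ then completes the argument. I expect no genuine obstacle here: the only creative step is guessing the defining formula, and once $\circ\top$ is selected, the valuation-independence of $\top^\M=S$ makes both directions immediate. In contrast to the undefinability results above, there is neither an induction to run nor a pair of frames to construct.
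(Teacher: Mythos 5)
Your proposal is correct and follows essentially the same route as the paper: both use the single formula $\circ\top$ and rest on the valuation-independent fact that $\top^\M=S$, so that $\M,s\vDash\circ\top$ holds iff $S\in N(s)$, from which both directions of the definability claim are immediate. The paper merely phrases the two directions separately (assuming $(n)$ to get validity, and assuming the failure of $(n)$ to refute validity at a witnessing state), which is the same argument you compress into one computed equivalence.
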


\begin{proof}
We show that $(n)$ is defined by $\circ\top.$ Let $\mathcal{F}=\lr{S,N}$. 

Suppose that $\mathcal{F}$ has $(n)$, to show that $\mathcal{F}\vDash\circ\top$. For this, for any model $\M$ based on $\mathcal{F}$ and $s\in S$, we need to show that $\M,s\vDash\circ\top$, which amounts to showing that $S\in N(s)$ (because $\M,s\vDash\top$ and $\top^\M=S$). And $S\in N(s)$ is immediate by supposition.

Conversely, assume that $\mathcal{F}$ does not have $(n)$, then there exists $s\in S$ such that $S\notin N(s)$, that is, $\top^\M\notin N(s)$. We have also $\M,s\vDash\top$, and thus $\M,s\nvDash\circ\top$, therefore $\mathcal{F}\nvDash\circ\top$.
\end{proof}

\subsection{$W$-morphisms}

\weg{\begin{proposition}
On the class of all models, the $(m)$-models, the $(c)$-models, the $(n)$-models, the $(r)$-models, $\mathcal{L}(\circ)$ is not at least as expressive as $\mathcal{L}(W)$.
\end{proposition}

\begin{proof}
Consider the following models:
$$
\xymatrix{&&&&&&&&\{t\}\\
\M&s:\neg p\ar[r]&\{s,t\}&t:p\ar[l]&&\M'&s:\neg p\ar[r]\ar[urr]&\{s,t\}&t:p\ar[l]}
$$

Firstly, it may be easily checked that both $\M$ and $\M'$ have $(m)$, $(c)$, $(n)$ and $(r)$.

Secondly, $(\M,s)$ and $(\M',s)$ can be distinguished by an $\mathcal{L}(W)$-formula: on the one hand, as $p^\M=\{t\}\notin N(s)$, we have $\M,s\nvDash Wp$; on the other hand, since $\M',s\nvDash p$ and $p^{\M'}=\{t\}\in N'(s)$, we infer that $\M,s\vDash Wp$.

However, these two pointed models cannot be distinguished by any $\mathcal{L}(\circ)$-formulas. To see this, note that $\M'=\M^{t^+}$ where $\Gamma_s=\{\{t\}\}$ and $\Gamma_t=\{\emptyset\}$. Then using Prop.~\ref{prop.equivalence-circ}, we conclude that $\M,s\vDash\phi$ iff $\M',s\vDash\phi$ for all $\phi\in\mathcal{L}(\circ)$.
\end{proof}}


\begin{definition}[$W$-morphisms]
Let $\M=\lr{S,N,V}$ and $\M'=\lr{S',N',V'}$ be neighborhood models. A function $f:S\to S'$ is a {\em $W$-morphism} from $\M$ to $\M'$, if for all $s\in S$,
\begin{enumerate}
\item[(Var)] $s\in V(p)$ iff $s'\in V'(p)$ for all $p\in \BP$;
\item[($W$-Mor)] for all $X\subseteq S$, $[X\in N(s)\text{ and }s\notin X]\iff [f[X]\in N'(f(s))\text{ and }f(s)\notin f[X]].$
\end{enumerate}

We say that $\M'$ is a $W$-morphic image of $\M$, if there is a surjective $W$-morphism from $\M$ to $\M'$.
\end{definition}

\begin{proposition}\label{prop.w-morphisms}
Let $\M=\lr{S,N,V}$ and $\M'=\lr{S',N',V'}$ be neighborhood models, and let $f$ be a $W$-morphism from $\M$ to $\M'$. Then for all $s\in S$, for all $\phi\in\mathcal{L}(W)$, we have
$\M,s\vDash\phi\iff \M',f(s)\vDash\phi$, that is, $f[\phi^\M]=\phi^{\M'}$.
\end{proposition}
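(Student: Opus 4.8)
The plan is to mirror the structure of the proof of Proposition~\ref{prop.circ-morphism}, proceeding by induction on the complexity of $\phi\in\mathcal{L}(W)$. The claim to be established is that $\M,s\vDash\phi\iff \M',f(s)\vDash\phi$ for all $s\in S$; the boolean cases (propositional variables, negation, conjunction) are routine, using condition (Var) for the atomic base case and the induction hypothesis together with the compositionality of the boolean connectives for the inductive step. The statement $f[\phi^\M]=\phi^{\M'}$ is simply a reformulation of this equivalence once we have it for every $s$, so I would not treat it separately.

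The only nontrivial case is $\phi=W\psi$, and here I would use condition ($W$-Mor) exactly as ($\bullet$-Mor) was used in Proposition~\ref{prop.circ-morphism}. Concretely, I would argue: suppose $\M,s\vDash W\psi$; then by the semantic clause $\psi^\M\in N(s)$ and $s\notin\psi^\M$. Instantiating ($W$-Mor) with $X=\psi^\M$ yields $f[\psi^\M]\in N'(f(s))$ and $f(s)\notin f[\psi^\M]$. Now I invoke the induction hypothesis, which (in its $f[\psi^\M]=\psi^{\M'}$ form) lets me rewrite $f[\psi^\M]$ as $\psi^{\M'}$, giving $\psi^{\M'}\in N'(f(s))$ and $f(s)\notin\psi^{\M'}$, i.e.\ $\M',f(s)\vDash W\psi$. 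The converse direction runs symmetrically: from $\M',f(s)\vDash W\psi$ I get $\psi^{\M'}\in N'(f(s))$ and $f(s)\notin\psi^{\M'}$, rewrite $\psi^{\M'}$ as $f[\psi^\M]$ via the induction hypothesis, and then apply the right-to-left direction of ($W$-Mor) to recover $\psi^\M\in N(s)$ and $s\notin\psi^\M$, hence $\M,s\vDash W\psi$.

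The one subtlety worth flagging is the interplay between the two ``halves'' of the $W$-modality's truth condition, namely the neighborhood membership $\psi^\M\in N(s)$ and the falsity condition $s\notin\psi^\M$. Because ($W$-Mor) bundles both conditions together as a single biconditional (rather than giving each separately), I expect the main care to be in making sure I apply ($W$-Mor) to the \emph{conjoined} statement rather than trying to transfer the membership and the non-membership independently --- the latter would not follow from the hypotheses as stated. Since the induction hypothesis delivers $f[\psi^\M]=\psi^{\M'}$ as an identity of sets, the substitution of $\psi^{\M'}$ for $f[\psi^\M]$ (and vice versa) is transparent and causes no difficulty. Thus the proof is essentially a faithful $W$-analogue of the $\bullet$-case, and I anticipate no genuine obstacle beyond this bookkeeping.
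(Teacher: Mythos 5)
Your proposal is correct and follows essentially the same route as the paper's own proof: induction on $\phi$, with the $W\psi$ case handled by applying ($W$-Mor) to the conjoined truth condition and then using the induction hypothesis in its set-identity form $f[\psi^\M]=\psi^{\M'}$ to pass between $f[\psi^\M]$ and $\psi^{\M'}$, in both directions. The subtlety you flag --- that ($W$-Mor) must be applied to the conjunction rather than to the membership and non-membership conditions separately --- is exactly how the paper's argument proceeds, so there is nothing to add.
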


\begin{proof}
By induction on $\phi$, where the only nontrivial case is $W\phi$.

Suppose that $\M,s\vDash W\phi$, to show that $\M',f(s)\vDash W\phi$. By supposition, $\phi^\M\in N(s)$ and $s\notin \phi^\M$. By ($W$-Mor), $f[\phi^\M]\in N'(f(s))$ and $f(s)\notin f[\phi^\M]$. By induction hypothesis, we infer that $\phi^{\M'}\in N'(f(s))$ and $f(s)\notin \phi^{\M'}$, and thus $\M',f(s)\vDash W\phi$.

Conversely, assume that $\M',f(s)\vDash W\phi$, to prove that $\M,s\vDash W\phi$. By assumption, $\phi^{\M'}\in N'(f(s))$ and $f(s)\notin \phi^{\M'}$. By induction hypothesis, we derive that $f[\phi^\M]\in N'(f(s))$ and $f(s)\notin f[\phi^\M]$. Then by ($W$-Mor) again, we get $\phi^\M\in N(s)$ and $s\notin \phi^\M$, and therefore $\M,s\vDash W\phi$.
\end{proof}

\weg{\begin{definition}[$W$-bisimulation]
Let $\M=\lr{S,N,V}$ and $\M'=\lr{S',N',V'}$ be neighborhood models. A nonempty relation $Z$ is said to be a {\em $W$-bisimulation} between $\M$ and $\M'$, if $sZs'$ implies the following:
\begin{enumerate}
\item[(Var)] $s\in V(p)$ iff $s'\in V'(p)$ for all $p\in \BP$.
\item[(W-Coh)] if $(X,X')$ is $Z$-coherent, then
$$[X\in N(s)\text{ and }s\notin X]\iff [X'\in N'(s')\text{ and }s'\notin X'].$$
\end{enumerate}
We say that $(\M,s)$ and $(\M',s')$ are {\em $W$-bisimilar}, notation: $(\M,s)\bis_W(\M',s')$, if there is a $W$-bisimulation between $\M$ and $\M'$ linking $s$ and $s'$.
\end{definition}

The $\mathcal{L}(W)$-formulas are invariant under $W$-bisimulations. In other words, $\mathcal{L}(W)$-formulas cannot distinguish between $W$-bisimilar pointed models.
\begin{proposition}[Invariance under $\bis_W$]\label{prop.inv-bis-W}
Let $\M$ and $\M'$ be both neighborhood models, $s\in\M$, $s'\in\M'$. If $(\M,s)\bis_W(\M',s')$, then for every $\phi\in\mathcal{L}(W)$, $\M,s\vDash\phi$ iff $\M',s'\vDash\phi$.
\end{proposition}

\begin{proof}
Suppose that $(\M,s)\bis_W(\M',s')$. The proof then goes by induction on $\phi$, where the only nontrivial case is $W\phi$.

At first, $(\phi^\M,\phi^{\M'})$ is $\bis_W$-coherent: for any $x\in S,x'\in S'$, if $x\bis_Wx'$, then by induction hypothesis, $x\in \phi^\M$ iff $x'\in \phi^{\M'}$.

Then we have the following equivalences:
\[
\begin{array}{ll}
&\M,s\vDash W\phi\\
\iff &\phi^\M\in N(s)\text{ and }s\notin \phi^\M\\
\stackrel{\text{(W-Coh)}}\iff &\phi^{\M'}\in N'(s')\text{ and }s'\notin\phi^{\M'}\\
\iff &\M',s'\vDash W\phi.\\
\end{array}\]
\end{proof}

\begin{proposition}
The $w$-bisimilarity $\bis_w$ is an equivalence relation.
\end{proposition}

\begin{proof}

\end{proof}}


The models $\M^{u^+}$ and $\M^{u^-}$ defined in~\cite[p.~262]{GilbertVenturi:2017} are, respectively, the special cases of those defined in the following proposition, when $\Sigma_w=U_w$. Therefore, the following proposition extends the result in~\cite[Thm.~2.8]{GilbertVenturi:2017}.

\begin{proposition}\label{prop.equivalence-W}
Let $\M=\lr{S,N,V}$. For all $w\in S$ and $\alpha\in\mathcal{L}(W)$, we have
$$\M,w\vDash \alpha\text{ iff }\M^{u^+},w\vDash\alpha$$
and
$$\M,w\vDash \alpha\text{ iff }\M^{u^-},w\vDash\alpha,$$
where $\M^{u^+}=\lr{S,N^{u^+},V}$ and $\M^{u^-}=\lr{S,N^{u^-},V}$, where $N^{u^+}(w)=N(w)\cup \Sigma_w$ and $N^{u^-}(w)=N(w)\backslash \Sigma_w$ for $\Sigma_w\subseteq U_w=\{X\subseteq S\mid w\in X\}$.
\end{proposition}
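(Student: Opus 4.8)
The plan is to apply Proposition~\ref{prop.w-morphisms} with the identity map, exactly mirroring the strategy used for Proposition~\ref{prop.equivalence-circ}. By that invariance result, it suffices to show that $f:S\to S$ defined by $f(x)=x$ is a $W$-morphism from $\M$ to $\M^{u^+}$, and likewise a $W$-morphism from $\M$ to $\M^{u^-}$. Since all three models carry the same valuation $V$, the condition (Var) is immediate, so the whole argument reduces to verifying ($W$-Mor) in each case.

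Because $f$ is the identity we have $f[X]=X$ and $f(w)=w$, so ($W$-Mor) collapses to the requirement that, for all $X\subseteq S$,
$$[X\in N(w)\text{ and }w\notin X]\iff [X\in N^{u^+}(w)\text{ and }w\notin X]$$
and
$$[X\in N(w)\text{ and }w\notin X]\iff [X\in N^{u^-}(w)\text{ and }w\notin X].$$
The ``$\Longrightarrow$'' of the first equivalence and the ``$\Longleftarrow$'' of the second both follow at once from the inclusions $N^{u^-}(w)\subseteq N(w)\subseteq N^{u^+}(w)$, just as in the $\bullet$ case.

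For the two remaining directions the key observation is that the side condition $w\notin X$ carried by the $W$ modality rules out membership in the perturbation set $\Sigma_w$: since $\Sigma_w\subseteq U_w=\{X\subseteq S\mid w\in X\}$, whenever $w\notin X$ we have $X\notin U_w$ and hence $X\notin\Sigma_w$. Thus for the first equivalence, if $X\in N^{u^+}(w)=N(w)\cup\Sigma_w$ together with $w\notin X$, then $X\notin\Sigma_w$ forces $X\in N(w)$; and for the second, if $X\in N(w)$ with $w\notin X$, then $X\notin\Sigma_w$ gives $X\in N(w)\backslash\Sigma_w=N^{u^-}(w)$. This completes the verification that the identity is a $W$-morphism in both cases, and Proposition~\ref{prop.w-morphisms} delivers the claimed equivalences for every $\alpha\in\mathcal{L}(W)$.

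I do not anticipate any genuine obstacle, as the argument is the precise dual of Proposition~\ref{prop.equivalence-circ}, with the pair ``$w\in X$/$S_w$'' replaced throughout by ``$w\notin X$/$U_w$''. The only point demanding care is confirming that the polarity matches: the truth condition for $W\phi$ attaches the constraint $s\notin\phi^{\M}$, and this constraint must align with the defining condition $w\in X$ of $U_w$ in exactly the opposite sense, so that the added or removed neighborhoods in $\Sigma_w$ never touch the sets relevant to the $W$ modality. Getting this orientation right is what makes the identity map work.
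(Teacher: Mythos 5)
Your proposal is correct and matches the paper's own proof essentially verbatim: both reduce the claim to Proposition~\ref{prop.w-morphisms} via the identity map, dispatch the ``$\Longrightarrow$'' of the $N^{u^+}$ case and the ``$\Longleftarrow$'' of the $N^{u^-}$ case by the inclusions $N^{u^-}(w)\subseteq N(w)\subseteq N^{u^+}(w)$, and settle the remaining two directions by observing that $w\notin X$ implies $X\notin U_w$, hence $X\notin\Sigma_w$. No gaps; nothing further is needed.
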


\begin{proof}
By Prop.~\ref{prop.w-morphisms}, it suffices to show that $f:S\to S$ such that $f(x)=x$ is a $W$-morphism from $\M$ to $\M^{u^+}$, and also a $W$-morphism from $\M$ to $\M^{u^-}$.

The condition (Var) is clear. For ($W$-Mor), we only need to show that
$$[X\in N(w)\text{ and }w\notin X]\iff [X\in N^{u^+}(w)\text{ and }w\notin X]~~~~~~~~~~~~~~~~~(1)$$
and
$$[X\in N(w)\text{ and }w\notin X]\iff [X\in N^{u^-}(w)\text{ and }w\notin X]~~~~~~~~~~~~~~~~~(2).$$

The ``$\Longrightarrow$'' of (1) and ``$\Longleftarrow$'' of (2) are straightforward since $N^{u^-}(w)\subseteq N(w)\subseteq N^{u^+}(w)$.

Moreover, if $w\notin X$, then $X\notin U_w$, thus $X\notin \Sigma_w$. This gives us  ``$\Longleftarrow$'' of (1) and ``$\Longrightarrow$'' of (2).
\end{proof}

\weg{\begin{proof}
By Prop.~\ref{prop.inv-bis-W}, it suffices to show that $(\M,w)\bis_W(\M^{u^+},w)$ and $(\M,w)\bis_W(\M^{u^-},w)$. For this, define $Z=\{(w,w)\mid w\in S\}$. Let $wZw$. The condition (Var) is clear.

For (W-Coh), suppose that $(X,X')$ is $Z$-coherent. Then it must be the case that $X=X'$: otherwise, there would be an $w\in S$ such that $w\in X$ but $w\notin X'$, which contradicts the fact that $wZw$ and the supposition. We only need to show that
$$[X\in N(w)\text{ and }w\notin X]\iff [X\in N^{u^+}(w)\text{ and }w\notin X]~~~~~~~~~~~~~~~~~(1)$$
and
$$[X\in N(w)\text{ and }w\notin X]\iff [X\in N^{u^-}(w)\text{ and }w\notin X]~~~~~~~~~~~~~~~~~(2).$$

The ``$\Longrightarrow$'' of (1) and ``$\Longleftarrow$'' of (2) are straightforward since $N^{u^-}(w)\subseteq N(w)\subseteq N^{u^+}(w)$.

Moreover, if $w\notin X$, then $X\notin U_w$, thus $X\notin \Sigma_w$. This gives us  ``$\Longleftarrow$'' of (1) and ``$\Longrightarrow$'' of (2).
\end{proof}}

Similar to the case in Prop.~\ref{prop.equivalence-circ}, here $\Sigma_w$ is defined in terms of $w$, thus given any two points $x,y\in S$, $\Sigma_x$ may be different from $\Sigma_y$.

Now coming back to Prop.~\ref{prop.exp-lw-lcirc}, without showing directly $\mathcal{L}(W)$-formulas cannot distinguish between $(\M,s)$ and $(\M',s')$, we can appeal to Prop.~\ref{prop.equivalence-W}, by noting that $\M'=\M^{u^+}$ where $\Sigma_s=\{\{s\}\}$ and $\Sigma_t=\{\{s,t\}\}$.\footnote{Note that since $\Sigma_s$ is an arbitrary subset of $U_s$, and $\{s\}\in U_s$ (as $s\in \{s\}$), then we can set $\Sigma_s=\{\{s\}\}$. Similar arguments also holds for $\Sigma_t$ and other definitions of $\Sigma_s$ and $\Sigma_t$ in other situations below.} Prop.~\ref{prop.equivalence-W} will be also useful in proving a general completeness result (see Thm.~\ref{thm.generalcomp}).

With Prop.~\ref{prop.equivalence-W}, we have immediately the following, which extends the result in~\cite[Coro.~2.9]{GilbertVenturi:2017}.
\begin{corollary}\label{coro.frameequivalent-W}
Let $\mathcal{F}=\lr{S,N}$, and $\mathcal{F}^{u^+}=\lr{S,N^{u^+}}$ and $\mathcal{F}^{u^-}=\lr{S,N^{u^-}}$ be defined as in Prop.~\ref{prop.equivalence-W}. Then for all $\phi\in\mathcal{L}(\circ)$, we have
$$\mathcal{F}\vDash\phi\text{ iff }\mathcal{F}^{u^+}\vDash\phi$$
and
$$\mathcal{F}\vDash\phi\text{ iff }\mathcal{F}^{u^-}\vDash\phi.$$
\end{corollary}

Similar to Coro.~\ref{coro.frameequivalent-circ}, Coro.~\ref{coro.frameequivalent-W} can also be applied to proving the results of frame (un)definability in $\mathcal{L}(W)$.
\weg{\begin{proposition}
The frame properties $(m)$, $(n)$, $(f)$ are undefinable in $\mathcal{L}(W)$.
\end{proposition}

\begin{proof}
Consider the following frames:
\[
\xymatrix{\mathcal{F}&s\ar[r]&\emptyset&&\mathcal{F}'&s\ar[r]&\{s\}\\}
\]

One may check that $\mathcal{F}$ has $(f)$, but $\mathcal{F}'$ fails, since $\{s\}\in N'(s)$ but $s\in \{s\}$; $\mathcal{F}'$ has $(m)$ and $(n)$, but $\mathcal{F}$ does not, since $\emptyset\in N(s)$ but $\{s\}\notin N(s)$.

Moreover, $\mathcal{F}'=\mathcal{F}^{u^+}$ where $\Sigma_s=\{\{s\}\}$. Then by Coro.~\ref{coro.frameequivalent-W}, we conclude that for all $\phi\in\mathcal{L}(W)$, $\mathcal{F}\vDash\phi\iff \mathcal{F}'\vDash\phi.$
\end{proof}}

\begin{proposition}
The frame properties $(m)$ and $(n)$ are undefinable in $\mathcal{L}(W)$.
\end{proposition}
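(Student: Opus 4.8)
The plan is to follow the general recipe for frame undefinability described just before Prop.~\ref{prop.undefinable-cr}, using Coro.~\ref{coro.frameequivalent-W} as the main engine. I would exhibit two frames on a single state $S=\{s\}$ that verify exactly the same $\mathcal{L}(W)$-formulas yet disagree on both $(m)$ and $(n)$ simultaneously, so that one pair of frames settles both properties at once.

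First I would set $\mathcal{F}=\lr{S,N}$ with $N(s)=\{\emptyset\}$ and $\mathcal{F}'=\lr{S,N'}$ with $N'(s)=\{\{s\}\}$. A direct check shows that $\mathcal{F}'$ has both $(m)$ and $(n)$ while $\mathcal{F}$ has neither. For $(n)$: $S=\{s\}\in N'(s)$ but $S=\{s\}\notin N(s)$. For $(m)$: $\mathcal{F}'$ is trivially monotone, since the only superset of $\{s\}$ inside $S$ is $\{s\}$ itself, whereas in $\mathcal{F}$ we have $\emptyset\in N(s)$ and $\emptyset\subseteq\{s\}$ but $\{s\}\notin N(s)$, so monotonicity fails.

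Next I would observe that $N'(s)=N(s)\cup\{\{s\}\}$, hence $\mathcal{F}'=\mathcal{F}^{u^+}$ with $\Sigma_s=\{\{s\}\}$. The crucial point is that since $s\in\{s\}$ we have $\{s\}\in U_s$, so the side condition $\Sigma_s\subseteq U_s$ required by Coro.~\ref{coro.frameequivalent-W} is met. Applying that corollary then yields $\mathcal{F}\vDash\phi$ iff $\mathcal{F}'\vDash\phi$ for every $\phi\in\mathcal{L}(W)$. Finally I would close with the standard contradiction argument: were $(m)$ (respectively $(n)$) defined by some $\Theta\subseteq\mathcal{L}(W)$, then the equivalence just established would give $\mathcal{F}\vDash\Theta$ iff $\mathcal{F}'\vDash\Theta$, forcing $\mathcal{F}$ and $\mathcal{F}'$ to agree on possessing the property, which contradicts the fact that exactly one of them does.

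The only delicate step is respecting the side condition $\Sigma_s\subseteq U_s$ in Coro.~\ref{coro.frameequivalent-W}. Because the clause $(W\text{-Mor})$ only constrains neighborhoods $X$ with $s\notin X$, a $W$-morphism cannot detect any change made to a neighborhood that \emph{contains} the evaluation point; this is precisely why modifications are permitted only by sets in $U_w$. Consequently the asymmetric choice of toggling $\{s\}$ rather than $\emptyset$ is forced: adding $\emptyset$ (which does not contain $s$) would fall outside $U_s$ and would genuinely alter $\mathcal{L}(W)$-truth. Getting this one set right is the whole content of the argument; everything else is routine verification.
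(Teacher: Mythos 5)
Your overall strategy is exactly the paper's: exhibit a one-point frame pair related by the $(\cdot)^{u^+}$ construction, invoke Coro.~\ref{coro.frameequivalent-W}, and run the standard contradiction argument from the discussion preceding Prop.~\ref{prop.undefinable-cr}. However, there is a concrete error in your construction: you define $N'(s)=\{\{s\}\}$ and then assert $N'(s)=N(s)\cup\{\{s\}\}$. These are different sets, since $N(s)\cup\{\{s\}\}=\{\emptyset,\{s\}\}$. As literally defined, $\mathcal{F}'$ is \emph{not} of the form $\mathcal{F}^{u^+}$ for any $\Sigma_s\subseteq U_s$, because $N^{u^+}(s)=N(s)\cup\Sigma_s$ always contains $N(s)$, so $\emptyset$ would have to remain a neighborhood of $s$. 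Worse, the conclusion you want from the corollary is actually false for your literal pair: writing $\bot$ for $p\land\neg p$, in any model on $\mathcal{F}$ we have $\bot^\M=\emptyset\in N(s)$ and $s\notin\emptyset$, hence $\mathcal{F}\vDash W\bot$, whereas $\emptyset\notin N'(s)$ gives $\mathcal{F}'\nvDash W\bot$. So your two frames are distinguished by an $\mathcal{L}(W)$-formula and cannot witness undefinability of anything. Ironically, this is precisely the phenomenon you flag in your closing paragraph --- removing or adding a neighborhood \emph{not} containing $s$ (here $\emptyset$) is visible to $\mathcal{L}(W)$ --- but it is your own $\mathcal{F}'$ that commits it, by silently discarding $\emptyset$.

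The repair is immediate: keep $\emptyset$ and set $N'(s)=N(s)\cup\{\{s\}\}=\{\emptyset,\{s\}\}$, which is exactly the paper's second frame. Then $\mathcal{F}'=\mathcal{F}^{u^+}$ with $\Sigma_s=\{\{s\}\}\subseteq U_s$, Coro.~\ref{coro.frameequivalent-W} applies, and $\mathcal{F}'$ still has both $(m)$ and $(n)$ --- though your monotonicity check must now also cover the supersets of $\emptyset$, namely $\emptyset$ and $\{s\}$, both of which lie in $N'(s)$ --- while $\mathcal{F}$ has neither. With that one correction, your argument coincides with the paper's proof.
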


\begin{proof}
Consider the following frames:
\[
\xymatrix{\mathcal{F}&s\ar[r]&\emptyset&&\mathcal{F}'&\{s\}&s\ar[r]\ar[l]&\emptyset\\}
\]

One may check that $\mathcal{F}'$ has $(m)$ and $(n)$, but $\mathcal{F}$ does not, since $\emptyset\in N(s)$ but $\{s\}\notin N(s)$ although $\emptyset\subseteq \{s\}$.

Moreover, $\mathcal{F}'=\mathcal{F}^{u^+}$ where $\Sigma_s=\{\{s\}\}$. Then by Coro.~\ref{coro.frameequivalent-W}, we conclude that for all $\phi\in\mathcal{L}(W)$, $\mathcal{F}\vDash\phi\iff \mathcal{F}'\vDash\phi.$
\end{proof}

\begin{proposition}
The frame properties $(c)$ and $(r)$ are undefinable in $\mathcal{L}(W)$.
\end{proposition}

\begin{proof}
Consider the following frames:
$$
\xymatrix{&&&\{t\}&&&\{s\}&&\{t\}\\
\mathcal{F}&s\ar[r]\ar[urr]&\{s,t\}&t\ar[l]\ar[u]&&\mathcal{F}'&s\ar[r]\ar[u]\ar[urr]&\{s,t\}&t\ar[l]\ar[u]}
$$

One may check that $\mathcal{F}$ has $(c)$ and $(r)$, but $\mathcal{F}'$ fails, since $\{s\}\in N'(s)$ and $\{t\}\in N'(s)$ but $\{s\}\cap\{t\}=\emptyset\notin N'(s)$.

Moreover, $\mathcal{F}'=\mathcal{F}^{u^+}$ where $\Sigma_s=\{\{s\}\}$ and $\Sigma_t=\{\{s,t\}\}$. Then by Coro.~\ref{coro.frameequivalent-W}, we conclude that for all $\phi\in\mathcal{L}(W)$, $\mathcal{F}\vDash\phi\iff \mathcal{F}'\vDash\phi.$
\end{proof}

\weg{\begin{proposition}
On the class of all models, the $(m)$-models, the $(c)$-models, the $(r)$-models, $\mathcal{L}(W)$ is not at least as expressive as $\mathcal{L}(\circ)$.
\end{proposition}

\begin{proof}
Consider the following models:
$$
\xymatrix{\M&s:p&&\M'&s:p\ar[r]&\{s\}}
$$

One can check that $\M$ and $\M'$ possess the properties $(m)$ and $(c)$. And also, $\M'=\M^{u^+}$ where $\Sigma=\{\{s\}\}$, and thus by Prop.~\ref{prop.equivalence-W}, $(\M,s)$ and $(\M',s)$ cannot distinguished by $\mathcal{L}(W)$-formulas.

However, these two pointed models can be distinguished by an $\mathcal{L}(\circ)$-formula, since on the one hand, $\M',s\vDash\circ p$ but $\M,s\nvDash\circ p$: to see this, $s\vDash p$ but $p^{\M}=\{s\}\notin N(s)$.

For the case of $(r)$, just changing the above models to the following:
$$
\xymatrix{\M&s:p\ar[r]&\emptyset&&\M'&s:p\ar[r]&\{s\}}
$$
\end{proof}}

\weg{\begin{proposition}
On the class of all models, the $(m)$-models, the $(c)$-models,  $(n)$-models, the $(r)$-models, $\mathcal{L}(W)$ is not at least as expressive as $\mathcal{L}(\circ)$.
\end{proposition}

\begin{proof}
Consider the following models:
$$
\xymatrix{&&&&&&\{s\}&&\\
\M&s:p\ar[r]&\{s,t\}&t:\neg p\ar[l]&&\M'&s:p\ar[r]\ar[u]&\{s,t\}&t:\neg p\ar[l]}
$$

First, $\M$ and $\M'$ both have $(m)$, $(c)$, $(n)$ and $(r)$.

One the one hand, $\M'=\M^{u^+}$ where $\Sigma_s=\{\{s\}\}$ and $\Sigma_t=\{\{s,t\}\}$. By Prop.~\ref{prop.equivalence-W}, we infer that $(\M,s)$ and $(\M',s)$ cannot be distinguished by $\mathcal{L}(W)$-formulas.

On the other hand, these two pointed models can be distinguished by an $\mathcal{L}(\circ)$-formula, just noticing that $\M,s\nvDash\circ p$ (as $s\vDash p$ but $p^\M=\{s\}\notin N(s)$) and $\M',s\vDash\circ p$.
\end{proof}}

We conclude this part with another application of the notion of $W$-morphisms. For this, we define the notion of transitive closure of a neighborhood frame, which comes from~\cite[Def.~2.12]{GilbertVenturi:2017}.

\begin{definition}
Given a neighborhood frame $\mathcal{F}=\lr{S,N}$, we define its {\em transitive closure} $\mathcal{F}^{tc}=\lr{S,N^{tc}}$ inductively as $\bigcup_{i\in\mathbb{N}}\mathcal{F}_i$, with $\mathcal{F}_0=\mathcal{F}$ and $\mathcal{F}_{i+1}=\lr{S,N_{i+1}}$, where 
$$N_{i+1}(w)=N_i(w)\cup\{m_{N_i}(X)\mid X\in N_i(w)\}$$
for every $w\in S$, and
$$m_{N_i}(X)=\{z\in S\mid X\in N_i(z)\}$$
for $X\subseteq S$.
\end{definition}

\begin{fact}\label{fact.literature}\cite[Fact~2.13]{GilbertVenturi:2017}
For all $w\in S$, if $X\in N^{tc}(w)\backslash N(w)$, then $w\in X$.
\end{fact}

The following proposition is shown in~\cite[Thm.~2.14]{GilbertVenturi:2017}, but without use of a morphism argument. Here we give a much easier proof via the notion of $W$-morphisms.
\begin{proposition}
Let $\M=\lr{S,N,V}$ be a model based on a frame $\mathcal{F}$ and $\M^{tc}$ the corresponding one based on $\mathcal{F}^{tc}$. For all $w\in S$ and $\phi\in\mathcal{L}(W)$, we have
$$\M,w\vDash\phi\text{ iff }\M^{tc},w\vDash\phi.$$
\end{proposition}

\begin{proof} We show a stronger result: $f:S\to S$ such that $f(x)=x$ is a $W$-morphism from $\M$ to $\M^{tc}$.
which implies the statement due to Prop.~\ref{prop.w-morphisms}.  The condition (Var) is straightforward.

For ($W$-Mor), we need to show that
$$[X\in N(x)\text{ and }x\notin X]\iff [X\in N^{tc}(x)\text{ and }x\notin X].$$

The `$\Longrightarrow$' follows immediately since $N(x)\subseteq N^{tc}(x)$. For the other direction, if $X\in N^{tc}(x)\text{ and }x\notin X$, by Fact~\ref{fact.literature}, we obtain that $X\in N(x)$, as desired.
\end{proof}

\section{Axiomatizations}\label{sec.axiomatizations}

We now axiomatize $\mathcal{L}(\bullet)$ and $\mathcal{L}(W)$ over various neighborhood frames.

\subsection{Axiomatizations for $\mathcal{L}(\bullet)$}

The following lists the axioms and inference rules that are needed in this part.
\[\begin{array}{lllll}
\text{Axioms}&&&\text{Rules}\\
\texttt{PL}& \text{All instances of propositional tautologies}&&&\\
\circ\texttt{E}& \bullet\phi\to\phi&& \texttt{MP}&\dfrac{\phi,\phi\to\psi}{\psi}\\
\circ\texttt{M}& \circ\phi\land\phi\to\circ(\phi\vee\psi) && \\
\circ\texttt{C}& \circ\phi\land\circ\psi\to\circ(\phi\land\psi)&&\texttt{RE}\circ&\dfrac{\phi\lra \psi}{\circ\phi\lra \circ\psi}\\
\circ\texttt{N}& \circ\top &&\\
\end{array}\]

All axioms and inference rules arise in the literature, with distinct names, except for $\circ\texttt{M}$, which is derivable from axiom (K1.2) in~\cite{Marcos:2005}, that is, $((\phi\land\circ\phi)\vee(\psi\land\circ\psi))\to \circ(\phi\vee\psi)$. Rather, a stronger rule $\dfrac{\phi\to\psi}{(\circ\phi\land\phi)\to(\circ\psi\land\psi)}$ (denoted $\texttt{RM}\circ$), has usually been used to replace axiom $\circ\texttt{M}$ (see e.g.~\cite{Steinsvold:2008a,GilbertVenturi:2016,GilbertVenturi:2017}). But we prefer axioms to rules of inference. As we will see below, given $\texttt{RE}\circ$ (and propositional calculus), the rule $\texttt{RM}\circ$ is also derivable from $\circ\texttt{M}$.

\begin{proposition}\label{prop.derivable-rm}
$\texttt{RM}\circ$ is derivable from $\texttt{PL}+\texttt{MP}+\circ\texttt{M}+\texttt{RE}\circ$.
\end{proposition}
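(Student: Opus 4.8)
The plan is to derive the rule $\texttt{RM}\circ$, namely $\dfrac{\phi\to\psi}{(\circ\phi\land\phi)\to(\circ\psi\land\psi)}$, from the base system $\texttt{PL}+\texttt{MP}+\circ\texttt{M}+\texttt{RE}\circ$. First I would assume the premise $\phi\to\psi$ is derivable. The key observation is that $\phi\to\psi$ is propositionally equivalent to $\psi\lra(\phi\vee\psi)$; that is, under the hypothesis $\phi\to\psi$, the disjunction $\phi\vee\psi$ collapses to $\psi$. So using \texttt{PL} and \texttt{MP}, from $\phi\to\psi$ I would obtain $\vdash\psi\lra(\phi\vee\psi)$.

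Next I would feed this equivalence into $\texttt{RE}\circ$, the congruence rule, to get $\vdash\circ\psi\lra\circ(\phi\vee\psi)$. In particular this yields $\vdash\circ(\phi\vee\psi)\to\circ\psi$. Now I would invoke the axiom $\circ\texttt{M}$, which reads $\circ\phi\land\phi\to\circ(\phi\vee\psi)$. Chaining $\circ\texttt{M}$ with the implication just obtained gives $\vdash(\circ\phi\land\phi)\to\circ\psi$, all by propositional reasoning (\texttt{PL} and \texttt{MP}).

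It remains to recover the $\psi$ conjunct in the consequent. Since the premise gives $\phi\to\psi$, and from $\circ\phi\land\phi$ we certainly have $\phi$ (by \texttt{PL}), we get $\vdash(\circ\phi\land\phi)\to\psi$. Combining $\vdash(\circ\phi\land\phi)\to\circ\psi$ and $\vdash(\circ\phi\land\phi)\to\psi$ propositionally yields $\vdash(\circ\phi\land\phi)\to(\circ\psi\land\psi)$, which is exactly the conclusion of $\texttt{RM}\circ$. This completes the derivation.

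I do not anticipate a genuine obstacle here, since every step is either a propositional tautology manipulation or a direct application of one of the available rules. The one point requiring mild care is the first step: spotting that the instance of $\circ\texttt{M}$ needed is precisely the one whose second disjunct matches the target $\psi$, and then using $\texttt{RE}\circ$ on the right equivalence so that $\circ(\phi\vee\psi)$ can be replaced by $\circ\psi$ rather than being left as an awkward disjunction. Everything else is routine \texttt{PL}+\texttt{MP} bookkeeping.
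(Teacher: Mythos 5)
Your proposal is correct and follows essentially the same route as the paper's own derivation: use \texttt{PL} to turn the premise $\phi\to\psi$ into the equivalence $\phi\vee\psi\lra\psi$, apply $\texttt{RE}\circ$ to get $\circ(\phi\vee\psi)\lra\circ\psi$, chain with the axiom $\circ\texttt{M}$ to obtain $(\circ\phi\land\phi)\to\circ\psi$, and finally recover the conjunct $\psi$ from the premise by propositional reasoning. There is nothing to add; every step matches the paper's proof sequence.
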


\begin{proof}
We have the following proof sequences in $\texttt{PL}+\texttt{MP}+\circ\texttt{M}+\texttt{RE}\circ$:
\[
\begin{array}{llll}
(1)& \phi\to \psi & &\text{Premise}\\
(2)& \phi\vee\psi\lra \psi && (1),\texttt{PL},\texttt{MP}\\
(3)& \circ(\phi\vee\psi)\lra \circ\psi & & (2),\texttt{RE}\circ\\
(4)& \circ\phi\land\phi\to\circ(\phi\vee\psi) & & \circ\texttt{M}\\
(5)& \circ\phi\land\phi\to\circ\psi & & (3),(4)\\
(6)& (\circ\phi\land\phi)\to(\circ\psi\land\psi) & & (1),(5)\\
\end{array}
\]
\end{proof}


If we consider all axioms and rules above, we obtain a logic called ${\bf B_K}$ in~\cite{Steinsvold:2008a,GilbertVenturi:2016,GilbertVenturi:2017}.\footnote{More precisely, the system ${\bf B_K}$ contains the rule $\texttt{RM}\circ$ instead of the axiom $\circ\texttt{M}$, and skips the rule $\texttt{RE}\circ$ since it is then derivable from $\texttt{RM}\circ$ and $\circ\texttt{E}$ (see~\cite[Prop.~3.2]{GilbertVenturi:2016}).} ${\bf B_K}$ is the minimal logic for $\mathcal{L}(\circ)$ over relational semantics, that is, it is sound and strongly complete with respect to the class of all relational frames~\cite{Steinsvold:2008a}. As each Kripke model is pointwise equivalent to some augmented model, ${\bf B_K}$ is also (sound and) strongly complete with respect to the class of augmented frames~\cite{GilbertVenturi:2017}. Moreover, since every augmented model is a filter, thus ${\bf B_K}$ also characterizes the class of filters. From now on, for the sake of consistency on notation, we denote the logic by ${\bf K^\circ}$ here. As neighborhood semantics can handle logics weaker than the minimal relational logic, it is then natural to ask what logics weaker than ${\bf K^\circ}$ look like. Here is a table that summarizes ${\bf K^\circ}$ and its weaker logics and the corresponding classes of frames which determine them.\footnote{It is worth remarking that $\circ\texttt{E}$ is indispensable in any proof system in the table. To see this, define a new semantics which interprets all formulas of the form $\circ\phi$ as $\phi$ (so that $\bullet\phi$ is interpreted as $\neg\phi$), then one can see that under the new semantics, $\circ\texttt{E}$ is not valid, but any subsystem ${\bf L}-\circ\texttt{E}$ of ${\bf L}$ in the table is sound. This entails that $\circ\texttt{E}$ is not derivable in any such subsystem, and thus $\circ\texttt{E}$ is indispensable in any proof system in the table.}
\[\begin{array}{|c|c|}
\hline
\text{Proof systems}&\text{Frame classes}\\
\hline
  {\bf E^\circ}=\texttt{PL}+\texttt{MP}+\circ\texttt{E}+\texttt{RE}\circ& \text{---}\\
  {\bf M^\circ}={\bf E^\circ}+\circ\texttt{M}& (m)\\
  {\bf EC^\circ}={\bf E^\circ}+\circ\texttt{C}& (c)\\
  {\bf EN^\circ}={\bf E^\circ}+\circ\texttt{N} & (n)\\
  {\bf EMC^\circ}={\bf M^\circ}+\circ\texttt{C}& (mc)\\
  {\bf EMN^\circ}={\bf M^\circ}+\circ\texttt{N}& (mn)\\
  {\bf ECN^\circ}={\bf EC^\circ}+\circ\texttt{N}& (cn)\\
  {\bf K^\circ}={\bf EMC^\circ}+\circ\texttt{N}& \text{filters}=(mcn)\\
\hline
\end{array}
\]

A natural question is: are all unknown truths themselves unknown truths? Interestingly, in monotone logics, the answer is positive. We now give a proof-theoretical perspective.
\begin{proposition}\label{prop.unknowntruths}
$\bullet\phi\to\bullet\bullet\phi$ is provable in ${\bf M^\circ}$.
\end{proposition}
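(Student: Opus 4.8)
The plan is to reduce $\bullet\phi\to\bullet\bullet\phi$ to a single application of the derived rule $\texttt{RM}\circ$ followed by routine propositional bookkeeping. The key reformulation is that, since $\circ\psi$ abbreviates $\neg\bullet\psi$, propositional logic already gives $\vdash\bullet\bullet\phi\lra\neg\circ\bullet\phi$; hence it suffices to establish $\vdash\neg(\circ\bullet\phi\land\bullet\phi)$, i.e.\ that $\bullet\phi$ is incompatible with $\circ\bullet\phi$. Semantically this matches the expected picture: by monotonicity the neighborhood $\phi^\M$ inherits membership from the smaller set $(\bullet\phi)^\M\subseteq\phi^\M$ (the inclusion coming from $\circ\texttt{E}$), so $(\bullet\phi)^\M\in N(s)$ would force $\phi^\M\in N(s)$, contradicting $\bullet\phi$.

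First I would invoke Prop.~\ref{prop.derivable-rm}, which makes $\texttt{RM}\circ$ available inside ${\bf M^\circ}$, and apply it to the instance $\bullet\phi\to\phi$ of $\circ\texttt{E}$. Reading the premise of $\texttt{RM}\circ$ with antecedent $\bullet\phi$ and consequent $\phi$, the rule outputs $(\circ\bullet\phi\land\bullet\phi)\to(\circ\phi\land\phi)$. This is the only nonroutine step, and it is precisely where monotonicity enters the proof — not directly, but through the derivability of $\texttt{RM}\circ$ from $\circ\texttt{M}$ (together with $\texttt{RE}\circ$, $\texttt{PL}$, $\texttt{MP}$).

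The remainder is pure $\texttt{PL}+\texttt{MP}$. Because $\circ\phi$ is literally $\neg\bullet\phi$, the consequent $\circ\phi\land\phi$ entails $\neg\bullet\phi$, so the implication above yields $(\circ\bullet\phi\land\bullet\phi)\to\neg\bullet\phi$. Since the antecedent also trivially entails $\bullet\phi$, the conjunction $\circ\bullet\phi\land\bullet\phi$ entails $\bullet\phi\land\neg\bullet\phi$ and is therefore refutable, giving $\neg(\circ\bullet\phi\land\bullet\phi)$. Rewriting this as $\bullet\phi\to\neg\circ\bullet\phi$, and then $\neg\circ\bullet\phi$ as $\bullet\bullet\phi$, delivers $\bullet\phi\to\bullet\bullet\phi$.

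I expect the only delicate point to be keeping track of the double negations generated by the abbreviation $\circ\psi:=\neg\bullet\psi$ — in particular verifying $\bullet\bullet\phi\lra\neg\circ\bullet\phi$ cleanly — rather than anything genuinely hard. Once $\texttt{RM}\circ$ is granted by Prop.~\ref{prop.derivable-rm}, the argument collapses to essentially a two-line propositional computation, and no appeal to semantics or to the neighborhood structure is needed.
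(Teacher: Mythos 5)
Your proposal is correct and follows essentially the same route as the paper's proof: both apply the derived rule $\texttt{RM}\circ$ (via Prop.~\ref{prop.derivable-rm}) to the axiom instance $\bullet\phi\to\phi$ of $\circ\texttt{E}$, and then finish by purely propositional reasoning exploiting the abbreviation $\circ\psi:=\neg\bullet\psi$. The only difference is cosmetic bookkeeping --- you refute the conjunction $\circ\bullet\phi\land\bullet\phi$ directly, whereas the paper first derives $\circ\bullet\phi\to\circ\phi$ and contraposes --- which amounts to the same $\texttt{PL}$ steps.
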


\begin{proof}
Notice that we have the following proof sequences in ${\bf M^\circ}$.
\[
\begin{array}{lll}
(i) & \bullet\phi\to\phi&\circ\texttt{E}\\
(ii)& \circ\bullet\phi\land\bullet\phi\to \circ\phi&(i),\texttt{RM}\circ\\
(iii)&\circ\bullet\phi\to\circ\phi&(ii),\texttt{PL}\\
(iv) &\bullet\phi\to\bullet\bullet\phi&(iii),\texttt{PL}
\end{array}
\]
\end{proof}


We now focus on the completeness of the proof systems in the above table. The completeness proof is based on the construction of the canonical model. From now on, we define the {\em proof set} of $\phi$ in a system $\Lambda$, denoted $|\phi|_\Lambda$, as the set of maximal consistent sets of $\Lambda$ that contains $\phi$; in symbol, $|\phi|_\Lambda=\{s\in S^c\mid \phi\in s\}$. We skip the subscript and simply write $|\phi|$ whenever the system $\Lambda$ is clear. If a set of states $\Gamma$ is not a proof set in $\Lambda$ for any formula, then we say it is a {\em non-proof set} relative to $\Lambda$.
\begin{definition}\label{def.cm-ecirc}
The {\em canonical model} for ${\bf E^\circ}$ is the triple $\M^c=\lr{S^c,N^c,V^c}$, where
\begin{itemize}
\item $S^c=\{s\mid s\text{ is a maximal consistent set of }{\bf E^\circ}\}$,
\item $N^c(s)=\{|\phi|\mid \circ\phi\land\phi\in s\}$,
\item $V^c(p)=\{s\in S^c\mid p\in s\}$.
\end{itemize}
\end{definition}


\weg{\begin{lemma}
For all $s\in S^c$, for all $\phi\in\mathcal{L}(\circ)$, we have
$\M^c,s\vDash\phi\iff \phi\in s$, that is, $\phi^{\M^c}=|\phi|$.
\end{lemma}

\begin{proof}
By induction on $\phi$. The nontrivial case is $\circ\phi$, that is, we only need to show that $\M^c,s\vDash \circ\phi$ iff $\circ\phi\in s$.

First, suppose that $\circ\phi\in s$, to show that $\M^c,s\vDash \circ\phi$. Assume that $\M^c,s\vDash\phi$, viz., $s\in\phi^{\M^c}$, then by induction hypothesis, $s\in |\phi|$, namely, $\phi\in s$. By supposition, we infer that $\circ\phi\land\phi\in s$. Then from the definition of $N^c$, it follows that $|\phi|\in N^c(s)$. Now by induction hypothesis again, we conclude that $\phi^{\M^c}\in N^c(s)$. Therefore, $\M^c,s\vDash \circ\phi$.

Conversely, suppose that $\circ\phi\notin s$, to prove that $\M^c,s\nvDash \circ\phi$, which by induction hypothesis is equivalent to showing that $\phi\in s$ and $|\phi|\notin N^c(s)$. By supposition, we have $\bullet\phi\in s$. Then using axiom $\circ\texttt{E}$, we infer that $\phi\in s$. As $\circ\phi\land\phi\notin s$, we derive that $|\phi|\notin N^c(s)$ according to the definition of $N^c$.
\end{proof}}

\begin{lemma}
For all $s\in S^c$, for all $\phi\in\mathcal{L}(\bullet)$, we have
$\M^c,s\vDash\phi\iff \phi\in s$, that is, $\phi^{\M^c}=|\phi|$.
\end{lemma}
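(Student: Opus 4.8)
The plan is to prove the statement by induction on the structure of $\phi$, showing simultaneously that $\M^c,s\vDash\phi\iff \phi\in s$ for every $s\in S^c$. The propositional base case and the Boolean inductive steps (for $\neg$ and $\land$) are routine and rely only on the standard closure properties of maximal consistent sets: each such set is deductively closed and contains exactly one of $\chi$, $\neg\chi$ for every formula $\chi$. Hence the entire content lies in the modal case $\phi=\bullet\psi$, where I want $\M^c,s\vDash\bullet\psi\iff\bullet\psi\in s$. Using the induction hypothesis in the form $\psi^{\M^c}=|\psi|$, the semantic clause for $\bullet$ rewrites the left-hand side as ``$s\in|\psi|$ and $|\psi|\notin N^c(s)$'', that is, as ``$\psi\in s$ and $|\psi|\notin N^c(s)$''. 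Thus the task reduces to a purely syntactic equivalence between this condition and $\bullet\psi\in s$.

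First I would isolate the crucial fact about the canonical neighborhood function, namely that for every formula $\psi$,
\[
|\psi|\in N^c(s)\iff \circ\psi\land\psi\in s.
\]
The direction ``$\Longleftarrow$'' is immediate from the definition $N^c(s)=\{|\chi|\mid \circ\chi\land\chi\in s\}$. The direction ``$\Longrightarrow$'' is where the rule $\texttt{RE}\circ$ does its work: if $|\psi|\in N^c(s)$, then $|\psi|=|\chi|$ for some $\chi$ with $\circ\chi\land\chi\in s$, and the standard canonical-model fact that proof sets coincide exactly for provably equivalent formulas yields $\vdash\psi\lra\chi$. Applying $\texttt{RE}\circ$ gives $\vdash\circ\psi\lra\circ\chi$, and since $s$ is deductively closed, both $\chi\in s$ and $\circ\chi\in s$ transfer to $\psi\in s$ and $\circ\psi\in s$, so $\circ\psi\land\psi\in s$. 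I expect this step to be the main obstacle, because it is precisely where one must guard against the danger that $N^c(s)$, being defined as a collection of proof sets, could contain $|\psi|$ through a differently-named but equivalent witness $\chi$; without $\texttt{RE}\circ$ the syntactic condition could not be recovered.

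With that fact in hand, assembling the $\bullet\psi$ case is mechanical. The condition ``$\psi\in s$ and $|\psi|\notin N^c(s)$'' becomes ``$\psi\in s$ and $\circ\psi\land\psi\notin s$'', which, using $\psi\in s$, is equivalent to ``$\psi\in s$ and $\circ\psi\notin s$''. Since $s$ is maximal consistent and $\circ\psi$ abbreviates $\neg\bullet\psi$, this is ``$\psi\in s$ and $\bullet\psi\in s$''. Finally, axiom $\circ\texttt{E}$ (that is, $\vdash\bullet\psi\to\psi$) makes the conjunct $\psi\in s$ redundant, since $\bullet\psi\in s$ already forces $\psi\in s$ by deductive closure. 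Hence the condition collapses to $\bullet\psi\in s$, completing the induction. It is worth noting that this last move is exactly why $\circ\texttt{E}$ is indispensable, as the footnote following the table of systems anticipates: without it the extra conjunct could not be discarded, and the truth lemma would fail for the modal operator.
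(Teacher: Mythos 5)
Your proof is correct and follows essentially the same route as the paper: induction on $\phi$, with the $\bullet$ case reduced, via axiom $\circ\texttt{E}$ and maximality of $s$, to the equivalence $|\psi|\in N^c(s)\iff \circ\psi\land\psi\in s$, whose nontrivial direction rests on $\texttt{RE}\circ$. The only organizational difference is that you fold the well-definedness of $N^c$ explicitly into the induction, whereas the paper proves the truth lemma first and states the well-definedness fact as a separate lemma immediately afterwards.
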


\begin{proof}
By induction on $\phi$. The nontrivial case is $\bullet\phi$, that is, we only need to show that $\M^c,s\vDash \bullet\phi$ iff $\bullet\phi\in s$.

First, suppose that $\bullet\phi\in s$, to prove that $\M^c,s\vDash \bullet\phi$, which by induction hypothesis is equivalent to showing that $\phi\in s$ and $|\phi|\notin N^c(s)$. By supposition and axiom $\circ\texttt{E}$, we infer that $\phi\in s$. As $\bullet\phi\in s$, we have $\circ\phi\land\phi\notin s$, and then $|\phi|\notin N^c(s)$ according to the definition of $N^c$.

Conversely,  suppose that $\bullet\phi\notin s$, to show that $\M^c,s\nvDash \bullet\phi$. Assume that $\M^c,s\vDash\phi$, viz., $s\in\phi^{\M^c}$, then by induction hypothesis, $s\in |\phi|$, namely, $\phi\in s$. By supposition, we infer that $\circ\phi\land\phi\in s$. Then from the definition of $N^c$, it follows that $|\phi|\in N^c(s)$. Now by induction hypothesis again, we conclude that $\phi^{\M^c}\in N^c(s)$. Therefore, $\M^c,s\nvDash \bullet\phi$.
\end{proof}

We also need to show that $N^c$ is well-defined.
\begin{lemma}
If $|\phi|\in N^c(s)$ and $|\phi|=|\psi|$, then $\circ\psi\land\psi\in s$.
\end{lemma}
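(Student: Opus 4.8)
The plan is to reduce the claim to the elementary fact that equal proof sets correspond to provably equivalent formulas, and then to transport that equivalence through the $\circ$-modality by means of the congruence rule $\texttt{RE}\circ$.

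First I would unpack the hypothesis $|\phi|\in N^c(s)$. By the definition of $N^c$, the membership of $|\phi|$ in $N^c(s)$ says only that $|\phi|$ coincides with $|\chi|$ for \emph{some} formula $\chi$ with $\circ\chi\land\chi\in s$; it does not by itself tell us anything about $\phi$. So I fix such a witness $\chi$, obtaining $|\chi|=|\phi|$ together with $\circ\chi\land\chi\in s$. Combining this with the other hypothesis $|\phi|=|\psi|$ gives $|\chi|=|\psi|$.

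Next I would invoke the standard Lindenbaum-style fact for the system ${\bf E^\circ}$: equality of proof sets $|\chi|=|\psi|$ is equivalent to provability of the biconditional $\vdash\chi\lra\psi$. The direction needed here ($|\chi|=|\psi|$ implies $\vdash\chi\lra\psi$) is obtained by contraposition: if $\not\vdash\chi\lra\psi$, then one of $\chi\land\neg\psi$ or $\psi\land\neg\chi$ is consistent and hence extends to a maximal consistent set separating $\chi$ from $\psi$, so the two proof sets differ.

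The decisive step, which is also where the well-definedness of $N^c$ really rests, is then to push this equivalence through $\circ$. From $\vdash\chi\lra\psi$ the rule $\texttt{RE}\circ$ yields $\vdash\circ\chi\lra\circ\psi$, and propositional reasoning ($\texttt{PL}$, $\texttt{MP}$) upgrades this to $\vdash(\circ\chi\land\chi)\lra(\circ\psi\land\psi)$. Since $s$ is a maximal consistent set containing $\circ\chi\land\chi$, it is deductively closed and so contains $\circ\psi\land\psi$, as desired. I expect the only genuine content to be this appeal to $\texttt{RE}\circ$: it is precisely this congruence rule (and none of the other axioms) that guarantees $\circ$ respects provable equivalence, so that whether a set belongs to $N^c(s)$ is independent of the particular formula chosen to name it; the surrounding steps are routine maximal-consistent-set bookkeeping. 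As a byproduct, taking $\psi=\phi$ shows $|\phi|\in N^c(s)$ already forces $\circ\phi\land\phi\in s$.
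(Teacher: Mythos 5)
Your proof is correct and follows essentially the same route as the paper's: obtain the provable equivalence of the representative formulas, push it through $\circ$ with $\texttt{RE}\circ$, and conclude by deductive closure of the maximal consistent set $s$. If anything, you are more careful than the paper, which asserts $\circ\phi\land\phi\in s$ directly from $|\phi|\in N^c(s)$ without unpacking the existential witness $\chi$ in the definition of $N^c$ --- exactly the gap your closing remark (the case $\psi=\phi$) makes explicit.
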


\begin{proof}
Suppose that $|\phi|\in N^c(s)$ and $|\phi|=|\psi|$, to show that $\circ\psi\land\psi\in s$. By supposition, we obtain $\circ\phi\land\phi\in s$ and $\vdash\phi\lra\psi$. By $\texttt{RE}\circ$, it follows that $\vdash\circ\phi\lra\circ\psi$. Therefore, $\circ\psi\land\psi\in s$.
\end{proof}

Now it is a routine work to show the following.
\begin{theorem}\label{thm.e-comp}
${\bf E^\circ}$ is sound and strongly complete with respect to the class of all neighborhood frames.
\end{theorem}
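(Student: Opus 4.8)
The plan is to prove soundness by a direct semantic verification of each axiom and rule, and completeness by the canonical-model method already set up above, invoking the truth lemma and the well-definedness lemma that immediately precede the statement.

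For soundness I would check the axioms and rules one at a time over an arbitrary neighborhood model $\M=\lr{S,N,V}$. The only nonpropositional axiom is $\circ\texttt{E}$: if $\M,s\vDash\bullet\phi$ then $s\in\phi^\M$ by the first conjunct of the semantic clause for $\bullet$, so $\M,s\vDash\phi$; hence $\bullet\phi\to\phi$ is valid. The rule $\texttt{RE}\circ$ is sound because the semantic clause for $\circ$ (equivalently for $\bullet$) refers to its argument only through the truth set $\phi^\M$: if $\vDash\phi\lra\psi$ then $\phi^\M=\psi^\M$ in every model, so $\circ\phi$ and $\circ\psi$ receive the same truth set. The cases of $\texttt{PL}$ and $\texttt{MP}$ are handled as usual. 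Crucially, no frame condition is invoked anywhere, which is exactly what lets the argument range over the class of \emph{all} neighborhood frames.

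For strong completeness I would argue contrapositively. Given $\Gamma\nvdash_{{\bf E^\circ}}\phi$, the set $\Gamma\cup\{\neg\phi\}$ is ${\bf E^\circ}$-consistent, so by a Lindenbaum construction (available since ${\bf E^\circ}$ contains $\texttt{PL}$ and is closed under $\texttt{MP}$) it extends to a maximal consistent set $s\in S^c$. The canonical model $\M^c$ of Definition~\ref{def.cm-ecirc} is then a legitimate neighborhood model: here the well-definedness lemma is needed to guarantee that $N^c(s)$ is specified by proof sets rather than by the particular formulas naming them, and this step is precisely where $\texttt{RE}\circ$ does its work. Applying the truth lemma $\M^c,s\vDash\psi\iff\psi\in s$ to each $\psi\in\Gamma$ and to $\phi$, we obtain $\M^c,s\vDash\psi$ for all $\psi\in\Gamma$ while $\M^c,s\nvDash\phi$, witnessing $\Gamma\nvDash\phi$ over the class of all neighborhood frames.

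I expect no serious obstacle, since the two genuinely load-bearing lemmas --- the truth lemma and the well-definedness of $N^c$ --- are already in hand, and the remaining work is just the standard packaging. The one conceptual point worth flagging is that the non-normality of $\bullet$ rules out the relational canonical model, so everything hinges on the tailored definition $N^c(s)=\{|\phi|\mid\circ\phi\land\phi\in s\}$, which encodes ``$\phi$ names a true neighborhood of $s$'' precisely so that the $\bullet$-case of the truth lemma collapses, via maximal consistency and $\circ\phi=\neg\bullet\phi$, to $\bullet\phi\in s$.
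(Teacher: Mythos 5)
Your proposal is correct and follows exactly the route the paper intends: the paper dismisses this theorem as ``routine work'' after establishing Definition~\ref{def.cm-ecirc}, the truth lemma, and the well-definedness lemma, and your write-up simply fills in that routine (pointwise validity checks for $\circ\texttt{E}$ and $\texttt{RE}\circ$ needing no frame conditions, plus Lindenbaum and the truth lemma for strong completeness). No gaps; this is the same argument, made explicit.
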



\begin{theorem}\label{thm.ec-comp}
${\bf EC^\circ}$ is sound and strongly complete with respect to the class of $(c)$-frames.
\end{theorem}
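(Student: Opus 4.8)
The plan is to mirror the completeness argument for ${\bf E^\circ}$ (Thm.~\ref{thm.e-comp}) essentially verbatim, adding only the verification that the extra axiom $\circ\texttt{C}$ forces the canonical neighborhood function to be closed under binary intersections. For soundness, beyond the already-established soundness of the ${\bf E^\circ}$-part, I would check that $\circ\texttt{C}$ is valid on any $(c)$-frame: assuming $\M,s\vDash\circ\phi\land\circ\psi$ and $s\in(\phi\land\psi)^\M$, one gets $s\in\phi^\M$ and $s\in\psi^\M$, whence $\phi^\M,\psi^\M\in N(s)$ by the semantics of $\circ$; closure under intersections then yields $\phi^\M\cap\psi^\M=(\phi\land\psi)^\M\in N(s)$, i.e. $\M,s\vDash\circ(\phi\land\psi)$.

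For completeness I would take the canonical model $\M^c=\langle S^c,N^c,V^c\rangle$ exactly as in Def.~\ref{def.cm-ecirc}, now with $S^c$ the set of maximal consistent sets of ${\bf EC^\circ}$ and $N^c(s)=\{|\phi|\mid\circ\phi\land\phi\in s\}$. Since the truth lemma and the well-definedness lemma for $N^c$ used only $\circ\texttt{E}$ and $\texttt{RE}\circ$ (both still present in ${\bf EC^\circ}$), their proofs carry over unchanged, giving $\phi^{\M^c}=|\phi|$ for every $\phi\in\mathcal{L}(\bullet)$.

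The one genuinely new step is to show that $N^c$ has property $(c)$. Suppose $X,Y\in N^c(s)$; by definition $X=|\phi|$ and $Y=|\psi|$ for some $\phi,\psi$ with $\circ\phi\land\phi\in s$ and $\circ\psi\land\psi\in s$. Using standard properties of maximal consistent sets, $X\cap Y=|\phi|\cap|\psi|=|\phi\land\psi|$. From $\circ\phi,\circ\psi\in s$ and axiom $\circ\texttt{C}$ I obtain $\circ(\phi\land\psi)\in s$, while $\phi,\psi\in s$ gives $\phi\land\psi\in s$; hence $\circ(\phi\land\psi)\land(\phi\land\psi)\in s$, so $X\cap Y=|\phi\land\psi|\in N^c(s)$. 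Thus $\M^c$ is based on a $(c)$-frame.

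Finally, strong completeness follows in the routine way: any ${\bf EC^\circ}$-consistent set extends by Lindenbaum's lemma to some $s\in S^c$, and the truth lemma shows that $\M^c,s$ satisfies it, with $\M^c$ a $(c)$-model. I do not expect a real obstacle here; the only point demanding a little care is the identification $|\phi|\cap|\psi|=|\phi\land\psi|$ together with the accompanying well-definedness, ensuring that the intersection of two canonical neighborhoods is again presented by a conjunction to which $\circ\texttt{C}$ applies.
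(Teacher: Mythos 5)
Your proposal is correct and follows essentially the same route as the paper's own proof: soundness by verifying the validity of $\circ\texttt{C}$ over $(c)$-frames with exactly the same semantic argument, and completeness by reusing the canonical model of Def.~\ref{def.cm-ecirc} and showing $N^c$ is closed under binary intersections via $\circ\texttt{C}$ applied to $\circ\phi\land\phi,\circ\psi\land\psi\in s$. The only difference is cosmetic: you make explicit the identification $|\phi|\cap|\psi|=|\phi\land\psi|$ and the carry-over of the truth and well-definedness lemmas, which the paper leaves implicit.
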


\begin{proof}
For soundness, we need to show the validity of $\circ\texttt{C}$ over the class of $(c)$-frames. For this, let $\M=\lr{S,N,V}$ be a $(c)$-model, $s\in S$, and suppose that $\M,s\vDash\circ\phi\land\circ\psi$, to show that $\M,s\vDash\circ(\phi\land\psi)$. Assume that $s\in (\phi\land\psi)^\M$, it suffices to show that $(\phi\land\psi)^\M\in N(s)$. By supposition, it follows that $s\in \phi^\M$ implies $\phi^\M\in N(s)$, and $s\in \psi^\M$ implies $\psi^\M\in N(s)$. By assumption, $s\in \phi^\M$ and $s\in \psi^\M$, and thus $\phi^\M\in N(s)$ and $\psi^\M\in N(s)$. An application of $(c)$ gives us $\phi^\M\cap \psi^\M\in N(s)$, that is, $(\phi\land\psi)^\M\in N(s)$, as desired.

\medskip

For completeness, define $\M^c$ w.r.t. ${\bf EC^\circ}$ as in Def.~\ref{def.cm-ecirc}. It suffices to show that $N^c$ is closed under conjunctions. For this, let $s\in S^c$ be arbitrary, and suppose that $X\in N^c(s)$ and $Y\in N^c(s)$, to show that $X\cap Y\in N^c(s)$. By supposition, there are $\phi,\psi$ such that $X=|\phi|\in N^c(s)$ and $Y=|\psi|\in N^c(s)$, then $\circ\phi\land\phi\in s$ and $\circ\psi\land\psi\in s$. From this and axiom $\circ\texttt{C}$, it follows that $\circ(\phi\land\psi)\land (\phi\land\psi)\in s$, and thus $|\phi\land\psi|\in N^c(s)$, viz. $X\cap Y\in N^c(s)$.
\end{proof}



\begin{theorem}\label{thm.en-comp}
${\bf EN^\circ}$ is sound and strongly complete with respect to the class of $(n)$-frames.
\end{theorem}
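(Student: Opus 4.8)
The plan is to follow exactly the template established in the proof of Thm.~\ref{thm.ec-comp}, handling soundness and completeness separately. Since ${\bf EN^\circ}={\bf E^\circ}+\circ\texttt{N}$, and since ${\bf E^\circ}$ is already known to be sound and strongly complete with respect to the class of all neighborhood frames (Thm.~\ref{thm.e-comp}), the only genuinely new work is to account for the axiom $\circ\texttt{N}$ (that is, $\circ\top$) and its correspondence with the property $(n)$.

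For soundness, I would show that $\circ\top$ is valid on every $(n)$-frame. Let $\M=\lr{S,N,V}$ be an $(n)$-model and $s\in S$. Unwinding the semantics of $\circ$, we have $\M,s\vDash\circ\top$ iff $s\in\top^\M$ implies $\top^\M\in N(s)$. Since $\top^\M=S$ and $s\in S$ holds trivially, this reduces to the condition $S\in N(s)$, which is exactly what property $(n)$ guarantees. Hence $\circ\top$ is valid on $(n)$-frames, and combined with the soundness of the other axioms and rules this yields soundness of ${\bf EN^\circ}$.

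For completeness, I would define the canonical model $\M^c=\lr{S^c,N^c,V^c}$ for ${\bf EN^\circ}$ exactly as in Def.~\ref{def.cm-ecirc}. The Truth Lemma and the well-definedness of $N^c$ carry over verbatim from the ${\bf E^\circ}$ case, so it suffices to verify that $N^c$ possesses property $(n)$, i.e.\ that $S^c\in N^c(s)$ for every $s\in S^c$. The key observation is that $S^c=|\top|$, since $\top$ belongs to every maximal consistent set. By the definition of $N^c$, membership $|\top|\in N^c(s)$ amounts to $\circ\top\land\top\in s$. Now $\vdash\top$ holds by $\texttt{PL}$ and $\vdash\circ\top$ is precisely the axiom $\circ\texttt{N}$, so $\circ\top\land\top\in s$ for every $s\in S^c$. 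Therefore $S^c=|\top|\in N^c(s)$, establishing $(n)$.

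I do not anticipate a serious obstacle here: the only point requiring slight care is recognizing the identity $S^c=|\top|$ so that the unit of the canonical model is realized as the proof set of a formula to which $N^c$ applies. Once this is noted, the argument that $\circ\texttt{N}$ forces $S^c\in N^c(s)$ is immediate, completely parallel to how $\circ\texttt{C}$ forced closure under intersections in Thm.~\ref{thm.ec-comp}.
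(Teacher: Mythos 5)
Your proposal is correct and matches the paper's own proof essentially step for step: soundness via the validity of $\circ\top$ on $(n)$-frames (which the paper obtains by citing Prop.~\ref{prop.definable-n}, whose proof is exactly your semantic unwinding), and completeness by taking the canonical model of Def.~\ref{def.cm-ecirc} and observing that $\circ\top\land\top\in s$ together with $|\top|=S^c$ forces $S^c\in N^c(s)$. No gaps.
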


\begin{proof}
The soundness follows directly from the soundness of ${\bf E^\circ}$ (Thm.~\ref{thm.e-comp}) and the validity of $\circ\texttt{N}$ (Prop.~\ref{prop.definable-n}).

For the completeness, define $\M^c$ w.r.t. ${\bf EN^\circ}$ as in Def.~\ref{def.cm-ecirc}. It suffices to show that for all $s\in S^c$, $S^c\in N^c(s)$. This follows immediately from the axiom $\circ\top\land\top\in s$ and the fact that $|\top|=S^c$.
\end{proof}

The following is a consequence of Thm.~\ref{thm.ec-comp} and Thm.~\ref{thm.en-comp}.
\begin{corollary}\label{coro.ecn-comp}
${\bf ECN^\circ}$ is sound and strongly complete with respect to the class of $(cn)$-frames.
\end{corollary}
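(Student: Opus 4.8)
The plan is to obtain both halves by combining the corresponding arguments for ${\bf EC^\circ}$ (Thm.~\ref{thm.ec-comp}) and ${\bf EN^\circ}$ (Thm.~\ref{thm.en-comp}), exploiting the fact that ${\bf ECN^\circ}={\bf EC^\circ}+\circ\texttt{N}={\bf E^\circ}+\circ\texttt{C}+\circ\texttt{N}$ and that the class of $(cn)$-frames is precisely the intersection of the $(c)$-frames and the $(n)$-frames. So the whole statement should reduce to checking that each of the two extra ingredients contributes exactly one of the two frame conditions, on both the semantic and the syntactic side.

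For soundness, I would observe that every axiom of ${\bf ECN^\circ}$ is valid on any $(cn)$-frame. The axioms and rule of ${\bf E^\circ}$ are sound on all neighborhood frames by Thm.~\ref{thm.e-comp}; $\circ\texttt{C}$ is valid on $(c)$-frames by the soundness argument of Thm.~\ref{thm.ec-comp}; and $\circ\texttt{N}$ is valid on $(n)$-frames by Prop.~\ref{prop.definable-n}. Since a $(cn)$-frame enjoys both $(c)$ and $(n)$, all axioms are valid there and $\texttt{MP}$ and $\texttt{RE}\circ$ preserve validity, which gives soundness at once.

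For completeness, I would take the canonical model $\M^c=\lr{S^c,N^c,V^c}$ for ${\bf ECN^\circ}$ defined verbatim as in Def.~\ref{def.cm-ecirc}, for which the truth lemma $\phi^{\M^c}=|\phi|$ and the well-definedness of $N^c$ already hold (their proofs use only $\circ\texttt{E}$ and $\texttt{RE}\circ$, both present in ${\bf ECN^\circ}$). It then suffices to verify that the canonical frame satisfies both $(c)$ and $(n)$. Closure under intersection is established exactly as in Thm.~\ref{thm.ec-comp}: if $|\phi|,|\psi|\in N^c(s)$ then $\circ\phi\land\phi,\circ\psi\land\psi\in s$, and axiom $\circ\texttt{C}$ yields $\circ(\phi\land\psi)\land(\phi\land\psi)\in s$, so $|\phi|\cap|\psi|=|\phi\land\psi|\in N^c(s)$. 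Containment of the unit is established exactly as in Thm.~\ref{thm.en-comp}: axiom $\circ\texttt{N}$ gives $\circ\top\land\top\in s$, whence $S^c=|\top|\in N^c(s)$ for every $s\in S^c$. Strong completeness then follows in the standard way: a consistent set extends to a maximal consistent set, which is a point of the $(cn)$-based model $\M^c$ satisfying it by the truth lemma.

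The point I would emphasize is that there is essentially no obstacle here, which is why the result is stated as a corollary rather than a theorem. Because the canonical model is defined identically in all three systems and each of the two frame properties is derived from its own axiom (together only with the shared base system ${\bf E^\circ}$), the two closure arguments are completely independent and apply simultaneously to the single model $\M^c$. The only thing worth spelling out, if anything, is precisely this independence, namely that adjoining $\circ\texttt{N}$ does not interfere with the intersection argument and vice versa; once that is noted, $\M^c$ is based on a $(cn)$-frame and we are done.
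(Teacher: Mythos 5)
Your proposal is correct and matches the paper's intent exactly: the paper states this corollary as an immediate consequence of Thm.~\ref{thm.ec-comp} and Thm.~\ref{thm.en-comp}, and your argument is precisely the unfolding of that claim—one canonical model (Def.~\ref{def.cm-ecirc}), with the $(c)$ and $(n)$ verifications running independently via $\circ\texttt{C}$ and $\circ\texttt{N}$, and soundness obtained by combining the two validity arguments.
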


Now we deal with the completeness of ${\bf M^\circ}$. As in the case of monotone modal logic, the canonical neighborhood function $N^c$ is not necessarily supplemented due to the presence of non-proof sets. To deal with this problem, we use the strategy of supplementation.

\begin{definition}\label{def.supplementation}
Let $\M=\lr{S,N,V}$ be a neighborhood model. We say that $\M^+=\lr{S,N^+,V}$ is the {\em supplementation} of $\M$, if for all $s\in S$, $N^+(s)=\{X\mid Y\subseteq X\text{ for some }Y\in N(s)\}$.
\end{definition}

Given any neighborhood model, its supplementation is supplemented. Also, $N(s)\subseteq N^+(s)$ for all $s\in S$. Moreover, the supplementation preserves the properties $(c)$ and $(n)$.
\begin{fact}\label{fact.MtoM+}
Let $\M=\lr{S,N,V}$ be a neighborhood frame. If $\M$ has $(c)$, then so does $\M^+$; if $\M$ has $(n)$, then so does $\M^+$.
\end{fact}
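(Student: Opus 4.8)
The plan is to unwind the definition of supplementation (Def.~\ref{def.supplementation}) and check each property by a short direct argument; both cases reduce to elementary set-theoretic bookkeeping. The one structural fact I would record first is that $N(s)\subseteq N^+(s)$ for every $s\in S$, since any $X\in N(s)$ is a superset of itself and so witnesses $X\in N^+(s)$.

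For property $(n)$: if $\M$ has $(n)$, then $S\in N(s)$ for all $s\in S$. Using $N(s)\subseteq N^+(s)$, I would immediately conclude $S\in N^+(s)$ for all $s$, so $\M^+$ has $(n)$. This direction is essentially trivial and needs no appeal to the shape of $N^+$ beyond the inclusion just noted.

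For property $(c)$: fix $s\in S$ and take arbitrary $X,X'\in N^+(s)$; the goal is $X\cap X'\in N^+(s)$. By the definition of $N^+$, I would choose witnesses $Y,Y'\in N(s)$ with $Y\subseteq X$ and $Y'\subseteq X'$. Applying closure under binary intersections $(c)$ for $\M$ gives $Y\cap Y'\in N(s)$. The crucial observation is that $Y\cap Y'\subseteq X\cap X'$, which follows at once from $Y\subseteq X$ and $Y'\subseteq X'$; hence $X\cap X'$ contains a member of $N(s)$ as a subset, and by the definition of $N^+$ we obtain $X\cap X'\in N^+(s)$.

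The main obstacle: there is essentially none beyond this one observation — that intersecting the two witnessing subsets $Y,Y'$ yields a subset of $X\cap X'$ which still lies in $N(s)$ by $(c)$. No extra closure under supersets of $\M$ itself is required, so the argument works even when $\M$ is not monotone; the upward closure built into $N^+$ does all the remaining work automatically.
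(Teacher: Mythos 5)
Your proof is correct and follows essentially the same route as the paper's: for $(c)$ you pick witnesses $Y\subseteq X$, $Y'\subseteq X'$ in $N(s)$, use closure under intersections to get $Y\cap Y'\in N(s)$, and note $Y\cap Y'\subseteq X\cap X'$; for $(n)$ you use the inclusion $N(s)\subseteq N^+(s)$. Nothing to add.
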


\begin{proof}
Suppose that $\M$ has $(c)$. Let $s\in S$ and $X,X'\subseteq S$, if $X,X'\in N^+(s)$, then $Y\subseteq X$ and $Y'\subseteq X'$ for some $Y,Y'\in N(s)$, thus $Y\cap Y'\subseteq X\cap X'$. From $Y,Y'\in N(s)$ and the supposition, it follows that $Y\cap Y'\in N(s)$. Therefore, $X\cap X'\in N^+(s)$. This means that $\M^+$ has also $(c)$.

Assume that $\M$ has $(n)$. Then $S\in N(s)$ for all $s\in S$. Since $N(s)\subseteq N^+(s)$, thus $S\in N^+(s)$ for all $s\in S$. This entails that $\M^+$ has also $(n)$.
\end{proof}


\begin{theorem}\label{thm.em-comp}
${\bf M^\circ}$ is sound and strongly complete with respect to the class of $(m)$-frames.
\end{theorem}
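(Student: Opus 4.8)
The plan is to treat soundness and completeness separately, with the completeness half resting on supplementing the canonical model.

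For soundness, since ${\bf M^\circ}={\bf E^\circ}+\circ\texttt{M}$ and ${\bf E^\circ}$ is already sound (Thm.~\ref{thm.e-comp}), I only need to check that $\circ\texttt{M}$, i.e. $\circ\phi\land\phi\to\circ(\phi\vee\psi)$, is valid on every $(m)$-frame. Fix an $(m)$-model $\M=\lr{S,N,V}$ and $s\in S$ with $\M,s\vDash\circ\phi\land\phi$. From $\M,s\vDash\phi$ together with $\M,s\vDash\circ\phi$ (which says $s\in\phi^\M$ implies $\phi^\M\in N(s)$) I obtain $\phi^\M\in N(s)$. Since $\phi^\M\subseteq(\phi\vee\psi)^\M$, monotonicity $(m)$ yields $(\phi\vee\psi)^\M\in N(s)$; as also $s\in(\phi\vee\psi)^\M$, this gives $\M,s\vDash\circ(\phi\vee\psi)$, as required.

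For completeness, I define $\M^c=\lr{S^c,N^c,V^c}$ w.r.t.\ ${\bf M^\circ}$ as in Def.~\ref{def.cm-ecirc} (so $N^c(s)=\{|\phi|\mid\circ\phi\land\phi\in s\}$), and then pass to its supplementation $(\M^c)^+$ from Def.~\ref{def.supplementation}. By construction $(\M^c)^+$ is supplemented, hence an $(m)$-model, so it suffices to establish the truth lemma $\phi^{(\M^c)^+}=|\phi|$ for all $\phi\in\mathcal{L}(\bullet)$. The induction is routine except at $\bullet\phi$, and there the only danger is that supplementing $N^c$ might have altered which proof sets count as neighborhoods. The crux is therefore the following agreement lemma: for every formula $\phi$ and every $s\in S^c$, $|\phi|\in(N^c)^+(s)$ iff $|\phi|\in N^c(s)$. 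The right-to-left direction is immediate from $N^c(s)\subseteq(N^c)^+(s)$. For left-to-right, suppose $|\phi|\in(N^c)^+(s)$; then some $Y\in N^c(s)$ satisfies $Y\subseteq|\phi|$. By the definition of $N^c$, $Y=|\chi|$ with $\circ\chi\land\chi\in s$, and $|\chi|\subseteq|\phi|$ forces $\vdash\chi\to\phi$ (otherwise $\chi\land\neg\phi$ would be consistent and extend to a maximal consistent set lying in $|\chi|\setminus|\phi|$). Applying the derived rule $\texttt{RM}\circ$ (Prop.~\ref{prop.derivable-rm}) to $\vdash\chi\to\phi$ gives $\vdash(\circ\chi\land\chi)\to(\circ\phi\land\phi)$, whence $\circ\phi\land\phi\in s$ and thus $|\phi|\in N^c(s)$.

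The essential point that makes this argument run --- and the main obstacle to watch --- is that \emph{every} member of $N^c(s)$ is itself a proof set, so the witness $Y$ can be named by a formula and fed into $\texttt{RM}\circ$; without this, supplementation could create genuinely new neighborhoods among the proof sets and wreck the truth lemma. With the agreement lemma in hand the $\bullet\phi$ case closes at once: by the induction hypothesis $\phi^{(\M^c)^+}=|\phi|$, so $(\M^c)^+,s\vDash\bullet\phi$ iff $s\in|\phi|$ and $|\phi|\notin(N^c)^+(s)$, which by the lemma is equivalent to $s\in|\phi|$ and $|\phi|\notin N^c(s)$ --- exactly the condition that the truth lemma for $\M^c$ (whose proof uses only $\circ\texttt{E}$ and $\texttt{RE}\circ$, both present in ${\bf M^\circ}$) equates with $\bullet\phi\in s$. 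Strong completeness with respect to the class of $(m)$-frames follows in the usual way.
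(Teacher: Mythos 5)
Your proposal is correct and follows essentially the same route as the paper's own proof: soundness by checking $\circ\texttt{M}$ on $(m)$-models via monotonicity, and completeness by supplementing the canonical model of Def.~\ref{def.cm-ecirc} and proving precisely the paper's key lemma ($|\phi|\in(N^c)^+(s)$ iff $\circ\phi\land\phi\in s$) using a formula witness $\chi$ and the derived rule $\texttt{RM}\circ$ from Prop.~\ref{prop.derivable-rm}. Your explicit remarks on why every neighborhood in $N^c(s)$ is a proof set, and on how the agreement lemma feeds into the $\bullet\phi$ case of the truth lemma, merely spell out steps the paper leaves implicit.
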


\begin{proof}
For soundness, by the soundness of ${\bf E^\circ}$ (Thm.~\ref{thm.e-comp}), it suffices to show that the axiom $\circ\texttt{M}$ preserves validity over $(m)$-frames.


Suppose for any $(m)$-model $\M=\lr{S,N,V}$ and $s\in S$ that $\M,s\vDash\circ\phi\land\phi$, to prove that $\M,s\vDash\circ(\phi\vee\psi)$. By supposition, we obtain that $s\in \phi^\M$ and $\phi^\M\in N(s)$. Since $\phi^\M\subseteq \phi^\M\cup \psi^\M$, by $(m)$, it follows that $(\phi\vee\psi)^\M\in N(s)$, and therefore $\M,s\vDash\circ(\phi\vee\psi)$.

\medskip

For completeness, define $\M^c$ w.r.t. ${\bf M^\circ}$ as in Def.~\ref{def.cm-ecirc}, and consider the supplementation of $\M^c$, that is, $(\M^c)^+=\lr{S^c,(N^c)^+,V^c}$. By definition of supplementation,  $(\M^c)^+$ possesses $(m)$.

It suffices to show that for all $s\in S^c$, for all $\phi\in\mathcal{L}(\circ)$,
$$|\phi|\in (N^c)^+(s)\iff \circ\phi\land\phi\in s.$$

``$\Longleftarrow$'' follows directly from the fact that $N^c(s)\subseteq (N^c)^+(s)$.

For ``$\Longrightarrow$'', suppose that $|\phi|\in (N^c)^+(s)$, then $X\subseteq |\phi|$ for some $X\in N^c(s)$. Since $X\in N^c(s)$, there must be a $\chi$ such that $|\chi|=X\in N^c(s)$, and then $\circ\chi\land\chi\in s$. We have also $|\chi|\subseteq |\phi|$, then $\vdash \chi\to\phi$. Note that the rule $\texttt{RM}\circ$ is derivable in ${\bf M^\circ}$ (Prop.~\ref{prop.derivable-rm}), thus we have $\vdash\circ\chi\land\chi\to\circ\phi\land\phi$, thus $\circ\phi\land\phi\in s$, as desired.
\end{proof}

\begin{theorem}
${\bf EMC^\circ}$ is sound and strongly complete with respect to the class of $(mc)$-frames.
\end{theorem}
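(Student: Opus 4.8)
The plan is to reuse, essentially unchanged, the two techniques already developed for ${\bf M^\circ}$ (Thm.~\ref{thm.em-comp}) and ${\bf EC^\circ}$ (Thm.~\ref{thm.ec-comp}), and to run them on one and the same canonical model. For soundness there is nothing new to do: every $(mc)$-model has both $(m)$ and $(c)$, so the validity of $\circ\texttt{M}$ over $(mc)$-frames is exactly the soundness computation in Thm.~\ref{thm.em-comp}, and the validity of $\circ\texttt{C}$ is the one in Thm.~\ref{thm.ec-comp}.

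For completeness I would build the canonical model $\M^c$ for ${\bf EMC^\circ}$ as in Def.~\ref{def.cm-ecirc} and then pass to its supplementation $(\M^c)^+=\lr{S^c,(N^c)^+,V^c}$ from Def.~\ref{def.supplementation}. The key point is that this single model carries both desired frame properties. Since $\circ\texttt{C}$ is an axiom of ${\bf EMC^\circ}$, the completeness argument of Thm.~\ref{thm.ec-comp} shows that $N^c$ is closed under binary intersections, i.e. $\M^c$ has $(c)$. By construction $(\M^c)^+$ has $(m)$, and by Fact~\ref{fact.MtoM+} supplementation preserves $(c)$; hence $(\M^c)^+$ has both $(m)$ and $(c)$ and is an $(mc)$-model.

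It then remains to establish the truth lemma for the supplemented model, namely that for all $s\in S^c$ and all $\phi\in\mathcal{L}(\bullet)$,
\[
|\phi|\in (N^c)^+(s)\iff \circ\phi\land\phi\in s,
\]
from which $\phi^{(\M^c)^+}=|\phi|$ follows by the usual induction (with $\bullet\phi$ the only nontrivial case, discharged via $\circ\texttt{E}$). Because ${\bf EMC^\circ}$ extends ${\bf M^\circ}$, the rule $\texttt{RM}\circ$ remains derivable (Prop.~\ref{prop.derivable-rm}), so I would simply repeat the argument in Thm.~\ref{thm.em-comp}: the direction ``$\Longleftarrow$'' uses $N^c(s)\subseteq (N^c)^+(s)$, while ``$\Longrightarrow$'' extracts a witness $\chi$ with $|\chi|\subseteq|\phi|$ and $|\chi|\in N^c(s)$, whence $\vdash\chi\to\phi$, and applies $\texttt{RM}\circ$ to conclude $\circ\phi\land\phi\in s$.

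I do not expect a serious obstacle; the only thing that needs checking is that the two constructions are genuinely compatible on $(\M^c)^+$, i.e. that supplementing in order to obtain $(m)$ does not destroy the $(c)$ that $\circ\texttt{C}$ bought us and does not break the $\texttt{RM}\circ$-based truth lemma. The first is precisely Fact~\ref{fact.MtoM+}, and the second holds because that truth lemma never used anything beyond $\texttt{RM}\circ$, which survives the addition of $\circ\texttt{C}$. Hence the real content of the proof is bookkeeping: assembling Thm.~\ref{thm.ec-comp}, Fact~\ref{fact.MtoM+}, and Thm.~\ref{thm.em-comp} on a common canonical model.
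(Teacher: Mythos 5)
Your proposal is correct and follows essentially the same route as the paper: canonical model plus supplementation as in Thm.~\ref{thm.em-comp}, closure of $N^c$ under intersections from the argument of Thm.~\ref{thm.ec-comp}, and preservation of $(c)$ under supplementation via Fact~\ref{fact.MtoM+}. The only difference is presentational: the paper compresses the truth lemma and the $\texttt{RM}\circ$-derivability check into the citation of Thm.~\ref{thm.em-comp}, whereas you spell them out explicitly.
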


\begin{proof}
The soundness follows directly from the soundness of ${\bf M^\circ}$ (Thm.~\ref{thm.em-comp}) and the validity of $\circ\texttt{C}$ (Thm.~\ref{thm.ec-comp}).

As for the completeness, define $\M^c$ and $(\M^c)^+$ w.r.t. ${\bf EMC^\circ}$ as in Thm.~\ref{thm.em-comp}. By Thm.~\ref{thm.em-comp}, it suffices to show that $(\M^c)^+$ possesses $(c)$. This follows immediately from Thm.~\ref{thm.ec-comp} and Fact~\ref{fact.MtoM+}.
\end{proof}

\begin{theorem}\label{thm.emn-comp}
${\bf EMN^\circ}$ is sound and strongly complete with respect to the class of $(mn)$-frames.
\end{theorem}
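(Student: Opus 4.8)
The plan is to mirror the proof of Theorem~\ref{thm.em-comp} for ${\bf EMC^\circ}$, replacing the role of $(c)$ by $(n)$ throughout. For soundness, I would appeal to the soundness of ${\bf M^\circ}$ (Thm.~\ref{thm.em-comp}), so that only the additional axiom $\circ\texttt{N}$ requires attention. But the validity of $\circ\texttt{N}$ over $(n)$-frames is already recorded in Prop.~\ref{prop.definable-n}, where $\circ\top$ is shown to define the property $(n)$; hence $\circ\texttt{N}$ is valid on every $(n)$-frame, and in particular on every $(mn)$-frame, which settles soundness.

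For completeness, I would build the canonical model $\M^c$ with respect to ${\bf EMN^\circ}$ exactly as in Def.~\ref{def.cm-ecirc}, and then pass to its supplementation $(\M^c)^+=\lr{S^c,(N^c)^+,V^c}$ as in the proof of Thm.~\ref{thm.em-comp}. The crucial truth lemma established there, namely
$$|\phi|\in (N^c)^+(s)\iff \circ\phi\land\phi\in s,$$
carries over verbatim: its proof uses only the derivability of $\texttt{RM}\circ$ (Prop.~\ref{prop.derivable-rm}) together with $\circ\texttt{E}$, both of which remain available in ${\bf EMN^\circ}\supseteq{\bf M^\circ}$, and adjoining $\circ\texttt{N}$ does not disturb that argument. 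Consequently it suffices to verify that $(\M^c)^+$ lies in the target frame class, i.e.\ that it enjoys both $(m)$ and $(n)$.

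The property $(m)$ holds automatically, since every supplementation is supplemented. For $(n)$ I would argue in two steps. First, $\M^c$ itself has $(n)$: by the reasoning in Thm.~\ref{thm.en-comp}, the axiom $\circ\texttt{N}$ yields $\circ\top\land\top\in s$ for every $s\in S^c$, whence $S^c=|\top|\in N^c(s)$. Second, Fact~\ref{fact.MtoM+} tells us that supplementation preserves $(n)$, so $(\M^c)^+$ inherits $(n)$ as well. Thus $(\M^c)^+$ is an $(mn)$-model satisfying the truth lemma, and strong completeness follows in the standard way.

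I do not expect any genuine obstacle here, as all the necessary components are already in place; the argument is essentially a recombination of Thm.~\ref{thm.em-comp}, Thm.~\ref{thm.en-comp}, and Fact~\ref{fact.MtoM+}. The only point deserving a moment's care is to confirm that the truth lemma of Thm.~\ref{thm.em-comp} stays intact once $\circ\texttt{N}$ is added — but since that lemma depends solely on monotonicity (via $\texttt{RM}\circ$) and on $\circ\texttt{E}$, the extra axiom is harmless, and the proof reduces to routine bookkeeping.
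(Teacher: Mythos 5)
Your proposal is correct and follows essentially the same route as the paper: soundness from ${\bf M^\circ}$ plus the validity of $\circ\texttt{N}$ (Prop.~\ref{prop.definable-n}), and completeness via the supplementation $(\M^c)^+$ of the canonical model, reusing the truth lemma of Thm.~\ref{thm.em-comp} and obtaining $(n)$ from the argument of Thm.~\ref{thm.en-comp} together with Fact~\ref{fact.MtoM+}. Your extra check that the truth lemma is unaffected by adjoining $\circ\texttt{N}$ is exactly what the paper leaves implicit in the phrase ``by Thm.~\ref{thm.em-comp}, it suffices to show that $(\M^c)^+$ possesses $(n)$''.
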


\begin{proof}
The soundness follows directly from the soundness of ${\bf M^\circ}$ (Thm.~\ref{thm.em-comp}) and the validity of $\circ\texttt{N}$ (Prop.~\ref{prop.definable-n}).

As for the completeness, define $\M^c$ and $(\M^c)^+$ w.r.t. ${\bf EMN^\circ}$ as in Thm.~\ref{thm.em-comp}. By Thm.~\ref{thm.em-comp}, it suffices to show that $(\M^c)^+$ possesses $(n)$. This follows immediately from Thm.~\ref{thm.en-comp} and Fact~\ref{fact.MtoM+}.
\end{proof}

\begin{theorem}
${\bf K^\circ}$ is sound and strongly complete with respect to the class of filters.
\end{theorem}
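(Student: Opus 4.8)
The plan is to follow exactly the pattern already used for the preceding completeness theorems, treating ${\bf K^\circ}={\bf EMC^\circ}+\circ\texttt{N}$ as the simultaneous combination of the $(m)$-, $(c)$-, and $(n)$-cases. For soundness, I would observe that every instance of an axiom of ${\bf K^\circ}$ is valid on the class of filters: the axioms and rules of ${\bf M^\circ}$ are valid over $(m)$-frames by Thm.~\ref{thm.em-comp}, the axiom $\circ\texttt{C}$ is valid over $(c)$-frames by (the soundness half of) Thm.~\ref{thm.ec-comp}, and $\circ\texttt{N}$ is valid over $(n)$-frames by Prop.~\ref{prop.definable-n}. Since a filter is precisely a frame with $(m)$, $(c)$, and $(n)$, all axioms are valid there, and the rules \texttt{MP} and $\texttt{RE}\circ$ preserve validity, so ${\bf K^\circ}$ is sound with respect to the class of filters.

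For completeness, I would define the canonical model $\M^c$ with respect to ${\bf K^\circ}$ as in Def.~\ref{def.cm-ecirc}, and pass to its supplementation $(\M^c)^+=\lr{S^c,(N^c)^+,V^c}$ as in Thm.~\ref{thm.em-comp}. By the definition of supplementation, $(\M^c)^+$ automatically possesses $(m)$. The crucial truth lemma, namely
$$|\phi|\in (N^c)^+(s)\iff \circ\phi\land\phi\in s\quad\text{for all }s\in S^c,\ \phi\in\mathcal{L}(\bullet),$$
is already established in the completeness proof of Thm.~\ref{thm.em-comp}; its ``$\Longrightarrow$'' direction rests only on $\texttt{RM}\circ$, which by Prop.~\ref{prop.derivable-rm} is derivable in ${\bf M^\circ}$ and hence a fortiori in ${\bf K^\circ}$. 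So the truth lemma transfers verbatim and I would not need to redo it.

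It then remains to check that $(\M^c)^+$ is in fact a filter, i.e.\ that it also has $(c)$ and $(n)$. For $(c)$: the argument in the completeness part of Thm.~\ref{thm.ec-comp} shows, using $\circ\texttt{C}$ (an axiom of ${\bf K^\circ}$), that the unsupplemented canonical function $N^c$ is closed under binary intersections; Fact~\ref{fact.MtoM+} then lifts $(c)$ to the supplementation $(\M^c)^+$. For $(n)$: the argument in Thm.~\ref{thm.en-comp} shows, using $\circ\texttt{N}$, that $S^c\in N^c(s)$ for every $s$; Fact~\ref{fact.MtoM+} again lifts $(n)$ to $(\M^c)^+$. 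Hence $(\M^c)^+$ has $(m)$, $(c)$, and $(n)$, so it is a filter, and the truth lemma exhibits every ${\bf K^\circ}$-consistent set as satisfiable in a filter model, yielding strong completeness.

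I do not expect a genuine obstacle here, since the theorem is essentially the confluence of the three independent constructions. The only point requiring care, which Fact~\ref{fact.MtoM+} is designed to handle, is that a single supplementation step must preserve $(c)$ and $(n)$ \emph{simultaneously} while introducing $(m)$; because supplementation acts uniformly and the two preservation claims of Fact~\ref{fact.MtoM+} are independent of one another, no interference arises, and the three cases combine without further work.
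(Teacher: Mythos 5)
Your proposal is correct and follows essentially the same route as the paper: canonical model for ${\bf K^\circ}$ as in Def.~\ref{def.cm-ecirc}, supplementation to get $(m)$ with the truth lemma carried over from Thm.~\ref{thm.em-comp}, and Fact~\ref{fact.MtoM+} to lift $(c)$ and $(n)$ (via the arguments of Thms.~\ref{thm.ec-comp} and~\ref{thm.en-comp}) to $(\M^c)^+$. The paper merely packages the $(m)$- and $(n)$-parts by citing Thm.~\ref{thm.emn-comp} and then adds $(c)$, whereas you inline all three ingredients explicitly; the content is identical.
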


\begin{proof}
The soundness follows immediately from that of ${\bf EMN^\circ}$ (Thm.~\ref{thm.emn-comp}) and the validity of $\circ\texttt{C}$ (Thm.~\ref{thm.ec-comp}).

As for the completeness, define $\M^c$ and $(\M^c)^+$ w.r.t. ${\bf K^\circ}$ as in Thm.~\ref{thm.emn-comp}. By Thm.~\ref{thm.emn-comp}, it suffices to show that $(\M^c)^+$ has $(c)$. This follows from Thm.~\ref{thm.ec-comp} and Fact~\ref{fact.MtoM+}.\footnote{Note that there was a mistake in~\cite[Thm.~1.8]{GilbertVenturi:2017}, where the authors did not prove that $(\M^c)^+$ (denoted $\M^+$ there) has $(c)$ and $(n)$; rather, they only show that $\M^c$ (denoted $\M$ there) does have, which though does not directly give us the completeness in question.}
\end{proof}

\subsection{Axiomatizations for $\mathcal{L}(W)$}\label{subsec.axiomatizationforlw}


To axiomatize $\mathcal{L}(W)$ over various neighborhood frames, we list the following axioms and rules of inference.
\[\begin{array}{lllll}
\text{Axioms}&&&\text{Rules}\\
\texttt{PL}& \text{All instances of propositional tautologies}&&&\\
\texttt{WE}& W\phi\to \neg\phi&&\texttt{MP}&\dfrac{\phi,\phi\to\psi}{\psi}\\
\texttt{WM}& W(\phi\land\psi)\land\neg\psi\to W\psi\\
\texttt{WC}& W\phi\land W\psi\to W(\phi\land\psi)&&\texttt{REW}&\dfrac{\phi\lra\psi}{W\phi\lra W\psi}\\
\end{array}\]

Similar to the axiomatizations for $\mathcal{L}(\bullet)$, all axioms and inference rules listed above also arise in the literature, with different names. The axiom $\texttt{WM}$ is derivable from a rule $\dfrac{\phi\to\psi}{(W\phi\land\neg\psi)\to W\psi}$ (see~\cite[Thm.~3.2]{Steinsvold:falsebelief}), denoted $\texttt{RMW}$, which has usually been used to replace $\texttt{WM}$~\cite{Steinsvold:falsebelief,GilbertVenturi:2017}. Again, we prefer axioms to inference rules. Also, note that the rule $\texttt{RMW}$ is derivable from the axiom $\texttt{WM}$ in the presence of $\texttt{REW}$ (and propositional calculus).

\begin{proposition}\label{prop.derivable-rmw}
$\texttt{RMW}$ is derivable from $\texttt{PL}+\texttt{MP}+\texttt{WM}+\texttt{REW}$.
\end{proposition}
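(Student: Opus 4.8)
The plan is to mirror, step for step, the derivation of $\texttt{RM}\circ$ given in Prop.~\ref{prop.derivable-rm}, exploiting the fact that $\texttt{WM}$ plays exactly the role for $W$ that $\circ\texttt{M}$ plays for $\circ$. The core idea is that the hypothesis $\phi\to\psi$ of the rule can be turned into a propositional equivalence, which $\texttt{REW}$ then lifts to an equivalence between $W$-formulas; feeding this into the axiom $\texttt{WM}$ collapses it to the desired conclusion $(W\phi\land\neg\psi)\to W\psi$.

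Concretely, I would first take $\phi\to\psi$ as the premise and use $\texttt{PL}$ together with $\texttt{MP}$ to derive $\phi\lra(\phi\land\psi)$ (both directions are immediate: $(\phi\land\psi)\to\phi$ is a tautology, and $\phi\to(\phi\land\psi)$ follows from the premise). Next I would apply $\texttt{REW}$ to this equivalence to obtain $W\phi\lra W(\phi\land\psi)$. I would then write down the instance $W(\phi\land\psi)\land\neg\psi\to W\psi$ of the axiom $\texttt{WM}$. Finally, combining the left-to-right direction $W\phi\to W(\phi\land\psi)$ of the equivalence from the previous step with this axiom instance by propositional reasoning ($\texttt{PL}$ and $\texttt{MP}$) yields $W\phi\land\neg\psi\to W\psi$, which is exactly the conclusion of $\texttt{RMW}$.

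There is no genuine obstacle here; the derivation is a routine four-line proof sequence. The only point requiring a little care --- and the one place the argument differs from the $\circ$ case --- is the choice of which propositional equivalence to hand to $\texttt{REW}$. In Prop.~\ref{prop.derivable-rm} one uses $(\phi\vee\psi)\lra\psi$, because $\circ\texttt{M}$ is phrased with $\circ(\phi\vee\psi)$; here, since $\texttt{WM}$ is phrased with $W(\phi\land\psi)$, the appropriate equivalence extracted from $\phi\to\psi$ is instead $\phi\lra(\phi\land\psi)$. Getting this dual match right is all that is needed for the proof to go through.
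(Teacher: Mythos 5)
Your proof is correct and is essentially identical to the paper's own derivation: the paper likewise derives $\phi\land\psi\lra\phi$ from the premise by $\texttt{PL}$ and $\texttt{MP}$, lifts it to $W(\phi\land\psi)\lra W\phi$ by $\texttt{REW}$, and combines this with the axiom instance $W(\phi\land\psi)\land\neg\psi\to W\psi$ to conclude $(W\phi\land\neg\psi)\to W\psi$. Your remark about choosing the conjunction-based equivalence (rather than the disjunction-based one used for $\texttt{RM}\circ$) matches exactly what the paper does.
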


\begin{proof}
We have the following proof sequences in $\texttt{PL}+\texttt{MP}+\texttt{WM}+\texttt{REW}$.
\[
\begin{array}{lll}
(1)& \phi\to\psi& \text{Premise}\\
(2)& \phi\land\psi\lra \phi &(1), \texttt{PL},\texttt{MP}\\
(3)& W(\phi\land\psi)\lra W\phi& (2),\texttt{REW}\\
(4)& W(\phi\land\psi)\land \neg\psi\to W\psi & \texttt{WM}\\
(5)& (W\phi\land\neg\psi)\to W\psi& (3),(4)\\
\end{array}
\]
\end{proof}

It is shown that the proof system consisting of all axioms and inference rules for $\mathcal{L}(W)$, denoted ${\bf K^W}$ here, is sound and strongly complete with respect to the class of all relational frames in~\cite{Steinsvold:falsebelief} and to the class of all neighborhood frames that are closed under intersections and are negatively supplemented in~\cite{GilbertVenturi:2017}.\footnote{More precisely, the system in~\cite{Steinsvold:falsebelief} (called $S^W$ there) and~\cite{GilbertVenturi:2017} (called ${\bf A_K}$ there) contains the rule $\texttt{RMW}$ instead of the axiom $\texttt{WM}$, and drops the rule $\texttt{REW}$ since it is then derivable from $\texttt{RMW}$ and $\texttt{WE}$ (see~\cite[Thm.~3.1]{Steinsvold:falsebelief}).} We will give the definition of `negatively supplemented' later. Again, it is natural to ask what logics weaker than ${\bf K^W}$ look like. Below is a table summarizing ${\bf K^W}$ and its weaker logics and the corresponding frame classes that determine them.
\[\begin{array}{|c|c|}
\hline
\text{Proof systems}&\text{Frame classes}\\
\hline
  {\bf E^W}=\texttt{PL}+\texttt{MP}+\texttt{WE}+\texttt{REW}& \text{---}, \text{ also } (n)\\
  {\bf M^W}={\bf E^W}+\texttt{WM}& (m),\text{ also }(mn)\\
  {\bf EC^W}={\bf E^W}+\texttt{WC}& (c),\text{ also }(cn)\\
  {\bf K^W}={\bf M^W}+\texttt{WC}& (mc),\text{ also \text{filters}}=(mcn)\\
\hline
\end{array}
\]

\weg{\[\begin{array}{|c|c|}
\hline
\text{Proof systems}&\text{Frame classes}\\
\hline
  {\bf E^W}=\texttt{PL}+\texttt{MP}+\texttt{WE}+\texttt{REW}& \text{---}\\
  {\bf M^W}={\bf E^W}+\texttt{WM}& (m)\\
  {\bf EC^W}={\bf E^W}+\texttt{WC}& (c)\\
  {\bf EMC^W}={\bf M^W}+\texttt{WC}& (mc)\\
  {\bf EN^W}={\bf E^W} & (n)\\
  {\bf EMN^W}={\bf M^W}& (mn)\\
  {\bf ECN^W}={\bf EC^W}& (cn)\\
  {\bf K^W}={\bf EMC^W}& \text{filters}\\
\hline
\end{array}
\]}


Note that $\texttt{WE}$ is indispensable in ${\bf K^W}$ and its weaker systems in the above table. To see this, consider an auxiliary semantics which interprets all formulas of the form $W\phi$ as $\phi$, then one may easily verify that the subsystem ${\bf K^W}-\texttt{WE}$ is sound with respect to the auxiliary semantics, but $\texttt{WE}$ is unsound, and thus $\texttt{WE}$ cannot be derived from the remaining axioms and inference rules. This entails that $\texttt{WE}$ is indispensable in ${\bf K^W}$, and thus $\texttt{WE}$ is indispensable in the weaker systems of ${\bf K^W}$.

We can also ask the following question: are all false beliefs themselves false beliefs? Different from the notion of unknown truths, the answer to this question is {\em negative}. In fact, {\em none} of false beliefs themselves are false beliefs. We now give a proof-theoretical perspective for this.
\begin{proposition}\label{prop.false-belief}
$W\phi\to \neg WW\phi$ is derivable in ${\bf E^W}$.
\end{proposition}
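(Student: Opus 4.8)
The plan is to give a proof-theoretic derivation of $W\phi\to\neg WW\phi$ in ${\bf E^W}$, exploiting the axiom $\texttt{WE}$ in two different ways. The key observation is that $\texttt{WE}$ applied to the formula $W\phi$ itself yields $WW\phi\to\neg W\phi$, and this is essentially the contrapositive of what we want. So the whole argument should be very short and rest on a single clever instantiation.

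Concretely, I would proceed as follows. First, take the axiom $\texttt{WE}$, namely $W\psi\to\neg\psi$, and instantiate $\psi$ with $W\phi$. This gives $WW\phi\to\neg W\phi$. Then a single application of propositional logic (contraposition) transforms this into $W\phi\to\neg WW\phi$, which is exactly the desired formula. Written as a proof sequence in the style used throughout the paper:
\[
\begin{array}{lll}
(1)& WW\phi\to \neg W\phi & \texttt{WE}\\
(2)& W\phi\to\neg WW\phi & (1),\texttt{PL}\\
\end{array}
\]

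I expect there to be essentially no obstacle here: the result is an immediate consequence of the factivity-style axiom $\texttt{WE}$ applied to the modalized formula $W\phi$, together with propositional reasoning. The only subtlety worth flagging is conceptual rather than technical, and it matches the surrounding discussion: because $W\phi$ already entails $\neg\phi$ (so $\phi$ is false), having a false belief about $W\phi$ would require $W\phi$ to be believed yet false, but $\texttt{WE}$ blocks this by forcing $WW\phi$ to entail $\neg W\phi$. This is why, in contrast to the positive introspection-like result $\bullet\phi\to\bullet\bullet\phi$ for unknown truths (Prop.~\ref{prop.unknowntruths}), the analogous iteration fails completely for false beliefs, and indeed $\neg WW\phi$ holds whenever $W\phi$ does — so none of one's false beliefs is itself a false belief. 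Note also that, unlike the $\bullet$-case which needed monotonicity ($\texttt{WM}$, via $\texttt{RM}\circ$), this $W$-result needs only the base system ${\bf E^W}$, since $\texttt{WE}$ alone suffices.
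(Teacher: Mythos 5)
Your proof is correct and is exactly the paper's own derivation: instantiate $\texttt{WE}$ at $W\phi$ to obtain $WW\phi\to\neg W\phi$, then contrapose via $\texttt{PL}$. Nothing further is needed.
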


\begin{proof}
We have the following proof sequences:
\[
\begin{array}{lll}
(i)& WW\phi\to \neg W\phi&\texttt{WE}\\
(ii)& W\phi\to \neg WW\phi&(i),\texttt{PL}
\end{array}
\]
\end{proof}

In the reminder of this section, we focus on the completeness of the four proof systems listed above, with the aid of canonical neighborhood model constructions.  Unfortunately, all these systems may not be handled by a uniform canonical neighborhood function; rather, we need to distinguish systems excluding axiom $\texttt{WM}$ from those including it. This is similar to the case of neighborhood contingency logics~\cite{fan2019family}.

\subsubsection{Systems excluding $\texttt{WM}$}

\begin{definition}\label{def.cm-ew} Let ${\bf L}$ be a system excluding $\texttt{WM}$. A tuple $\M^{\bf L}=\lr{S^{\bf L},N^{\bf L},V^{\bf L}}$ is the {\em canonical neighborhood model} for ${\bf L}$, if
\begin{itemize}
\item $S^{\bf L}=\{s\mid s\text{ is a maximal consistent set of }{\bf L}\}$,
\item $N^{\bf L}(s)=\{|\phi|\mid W\phi\in s\}$, 
\item $V^{\bf L}(p)=\{s\in S^{\bf L}\mid p\in s\}$.
\end{itemize}
\end{definition}

The neighborhood function $N^{\bf L}$ is well defined.
\begin{lemma}
Let ${\bf L}$ be a system excluding $\texttt{WM}$. If $|\phi|=|\psi|$ and $|\phi|\in N^{\bf L}(s)$, then $W\psi\in s$.
\end{lemma}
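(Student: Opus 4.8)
The plan is to prove this well-definedness lemma in exact parallel with the $\mathcal{L}(\bullet)$ counterpart proved earlier (the lemma stating that $|\phi|\in N^c(s)$ together with $|\phi|=|\psi|$ imply $\circ\psi\land\psi\in s$), the only change being that the replacement rule $\texttt{RE}\circ$ is here replaced by $\texttt{REW}$. The driving idea is that equality of proof sets is merely the semantic face of provable equivalence, that the replacement rule lets this equivalence pass through the $W$-modality, and that maximality of $s$ then transfers membership.

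First I would unpack the hypothesis $|\phi|\in N^{\bf L}(s)$. By Definition~\ref{def.cm-ew} this means there is a formula $\chi$ with $|\chi|=|\phi|$ and $W\chi\in s$ (when $\phi$ is itself the witnessing formula one may simply take $\chi=\phi$). Combining $|\chi|=|\phi|$ with the second hypothesis $|\phi|=|\psi|$ gives $|\chi|=|\psi|$.

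Next I would convert this identity of proof sets into the syntactic fact $\vdash\chi\lra\psi$. This is the one step that is not purely mechanical: it rests on Lindenbaum's lemma and the standard behaviour of maximal consistent sets. If $\chi\to\psi$ were unprovable, then $\{\chi,\neg\psi\}$ would be consistent and would extend to a maximal consistent set witnessing $|\chi|\neq|\psi|$; arguing symmetrically for $\psi\to\chi$ yields $\vdash\chi\lra\psi$.

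Finally, applying $\texttt{REW}$ to $\vdash\chi\lra\psi$ gives $\vdash W\chi\lra W\psi$. Since $W\chi\in s$ and $s$ is a maximal consistent set of ${\bf L}$, hence closed under provable consequence, we conclude $W\psi\in s$, as required. I do not anticipate any genuine obstacle: every system in the relevant table contains $\texttt{REW}$, so the argument is uniform across all systems excluding $\texttt{WM}$, and the only care needed is the routine passage from proof-set equality to provable equivalence noted above.
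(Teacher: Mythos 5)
Your proof is correct and follows essentially the same route as the paper's: proof-set equality yields provable equivalence, $\texttt{REW}$ transfers it through $W$, and deductive closure of the maximal consistent set $s$ gives $W\psi\in s$. Your version is slightly more careful than the paper's (which identifies the witness with $\phi$ itself and leaves the Lindenbaum-style passage from $|\chi|=|\psi|$ to $\vdash\chi\lra\psi$ implicit), but these are routine details, not a different argument.
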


\begin{proof}
Suppose that $|\phi|=|\psi|$ and $|\phi|\in N^{\bf L}(s)$, to prove that $W\psi\in s$. By supposition, $\vdash\phi\lra \psi$ and $W\phi\in s$. By $\texttt{REW}$, we have $\vdash W\phi\lra W\psi$, and thus $W\psi\in s$.
\end{proof}

\begin{lemma} Let ${\bf L}$ be a system excluding $\texttt{WM}$. For all $\phi\in\mathcal{L}(W)$, for all $s\in S^{\bf L}$, we have $\M^{\bf L},s\vDash \phi\iff \phi\in s$, that is, $\phi^{\M^{\bf L}}=|\phi|$.
\end{lemma}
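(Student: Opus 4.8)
The plan is to prove the Truth Lemma $\M^{\bf L},s\vDash\phi\iff\phi\in s$ by induction on the structure of $\phi$, following the standard canonical-model strategy. The Boolean cases ($p$, $\neg\phi$, $\phi\land\psi$) are routine, using the fact that each $s\in S^{\bf L}$ is a maximal consistent set. The only nontrivial case is $\phi=W\psi$, where by the induction hypothesis we may assume $\psi^{\M^{\bf L}}=|\psi|$, and we must show $\M^{\bf L},s\vDash W\psi\iff W\psi\in s$.

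For the case $W\psi$, I would argue both directions. First I would unfold the semantics: $\M^{\bf L},s\vDash W\psi$ holds iff $\psi^{\M^{\bf L}}\in N^{\bf L}(s)$ and $s\notin\psi^{\M^{\bf L}}$. By the induction hypothesis this rewrites to $|\psi|\in N^{\bf L}(s)$ and $\psi\notin s$. For the direction from membership to satisfaction, suppose $W\psi\in s$; then by definition of $N^{\bf L}$ we immediately get $|\psi|\in N^{\bf L}(s)$, and applying axiom $\texttt{WE}$ (together with maximal consistency) yields $\neg\psi\in s$, hence $\psi\notin s$, so $s\notin\psi^{\M^{\bf L}}$; combining gives $\M^{\bf L},s\vDash W\psi$. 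For the converse, suppose $\M^{\bf L},s\vDash W\psi$, so $|\psi|\in N^{\bf L}(s)$ and $\psi\notin s$; since $|\psi|\in N^{\bf L}(s)$, by definition of $N^{\bf L}$ there is some $\chi$ with $|\chi|=|\psi|$ and $W\chi\in s$, and the well-definedness lemma (just established) guarantees $W\psi\in s$.

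The main subtlety, and the reason this lemma is stated for systems \emph{excluding} $\texttt{WM}$, is precisely this last step: the canonical function $N^{\bf L}(s)=\{|\phi|\mid W\phi\in s\}$ collects proof sets, so membership $|\psi|\in N^{\bf L}(s)$ only tells us that \emph{some} formula with the same proof set as $\psi$ is a false belief at $s$. The well-definedness lemma, which rests on $\texttt{REW}$, is exactly what bridges this gap and ensures $W\psi\in s$ itself. I expect this to be the only place where any real care is needed; unlike the monotone systems (which require supplementation and hence a modified neighborhood function), here the raw canonical function already does the job, so no extra closure argument is required at the level of this Truth Lemma.
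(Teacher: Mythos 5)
Your proof is correct and follows essentially the same route as the paper's: induction on $\phi$ with $W\psi$ as the only nontrivial case, axiom $\texttt{WE}$ plus the induction hypothesis for the left-to-right direction, and the definition of $N^{\bf L}$ together with the well-definedness lemma (resting on $\texttt{REW}$) for the converse. If anything, your converse direction is spelled out slightly more carefully than the paper's, which simply asserts that $|\phi|\in N^{\bf L}(s)$ and $\phi\notin s$ ``immediately'' give $W\phi\in s$, leaving the appeal to the well-definedness lemma implicit.
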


\begin{proof}
By induction on $\phi$, where the nontrivial case is $W\phi$.

Suppose that $W\phi\in s$, to show that $\M^{\bf L},s\vDash W\phi$. By supposition and axiom $\texttt{WE}$, we derive that $\neg\phi\in s$, viz., $\phi\notin s$, then by IH, we obtain $\M^{\bf L},s\nvDash \phi$. It suffices to show that $\phi^{\M^{\bf L}}\in N^{\bf L}(s)$, which is equivalent to showing that $|\phi|\in N^{\bf L}(s)$ by IH. This follows directly from the fact that $W\phi\in s$.

Conversely, suppose that $\M^{\bf L},s\vDash W\phi$, to prove that $W\phi\in s$. By supposition and IH, $|\phi|\in N^{\bf L}(s)$ and $\phi\notin s$. This immediately gives us $W\phi\in s$.
\end{proof}

Now it is a standard work to show the following. 
\begin{theorem}\label{thm.comp-ew}
${\bf E^W}$ is sound and strongly complete with respect to the class of all neighborhood frames.
\end{theorem}


\begin{proposition}\label{thm.comp-ecw}
${\bf EC^W}$ is sound and strongly complete with respect to the class of $(c)$-frames.
\end{proposition}

\begin{proof}
For the soundness, by Thm.~\ref{thm.comp-ew}, it suffices to show the validity of $\texttt{WC}$. For this, let $\M=\lr{S,N,V}$ and $s\in S$ such that $\M,s\vDash W\phi\land W\psi$. Then $\phi^\M\in N(s)$ and $s\notin \phi^\M$, and $\psi^\M\in N(s)$ and $s\notin \psi^\M$. From $\phi^\M\in N(s)$ and $\psi^\M\in N(s)$ and $(c)$, it follows that $\phi^\M\cap \psi^\M\in N(s)$, that is, $(\phi\land\psi)^\M\in N(s)$; from $s\notin \phi^\M$ it follows that $s\notin (\phi\land\psi)^\M$. Therefore, $\M,s\vDash W(\phi\land\psi)$.

For the completeness, by Thm.~\ref{thm.comp-ew}, it is sufficient to prove that $N^{\bf L}$ has the property $(c)$. Suppose that $X\in N^{\bf L}(s)$ and $Y\in N^{\bf L}(s)$, then there are $\phi,\psi$ such that $X=|\phi|$ and $Y=|\psi|$. From $|\phi|\in N^{\bf L}(s)$ and $|\psi|\in N^{\bf L}(s)$, it follows that $W\phi\in s$ and $W\psi\in s$. By axiom $\texttt{WC}$, we obtain $W(\phi\land\psi)\in s$, thus $|\phi\land\psi|\in N^{\bf L}(s)$, namely $X\cap Y\in N^{\bf L}(s)$.
\end{proof}

\weg{We close this part with a general soundness and completeness result. For any system ${\bf L}$ excluding $\texttt{WM}$, as $W\top\notin s$ (by axiom $\texttt{W1}$), and thus $|\top|=S^c\notin N^c(s)$. Thus the canonical model $\M^{\bf L}$ does not have $(n)$. We can handle this problem with Prop.~\ref{prop.equivalence-W}. The following general completeness result is a corollary of Prop.~\ref{prop.equivalence-W}. 

\begin{theorem}\label{thm.generalcomp}
Let ${\bf L}$ be a system of $\mathcal{L}(W)$ excluding $\texttt{WM}$. If ${\bf L}$ is determined by a certain class of neighborhood frames, then it is also determined by the class of neighborhood frames satisfying $(n)$.
\end{theorem}

\begin{proof}
Suppose that ${\bf L}$ is determined by a certain class $\mathbb{C}$ of neighborhood frames, to show that ${\bf L}$ is sound and strongly complete with respect to the class of neighborhood frames satisfying $(n)$. The soundness is clear, since the class of neighborhood frames satisfying $(n)$ is contained in $\mathbb{C}$.

For the completeness, by supposition, every ${\bf L}$-consistent set, say $\Gamma$, is satisfiable in a model based on the frame in $\mathbb{C}$. That is, there exists a model $\M=\lr{S,N,V}$ where $\lr{S,N}\in\mathbb{C}$ and a state $s\in S$ such that $\M,s\vDash\Gamma$. Now, applying Prop.~\ref{prop.equivalence-W}, we obtain that $\M^{u^+},s\vDash\Gamma$ for $\M^{u^+}=\lr{S,N^{u^+},V}$, where $N^{u^+}(s)=N(s)\cup \{S\}$. Note that the definition of $N^{u^+}$ is well defined, since in Prop.~\ref{prop.equivalence-W}, $\Sigma_s$ is an arbitrary subset of $U_s$ and $S\in U_s$ (as $s\in S$) thus $\{S\}\subseteq U_s$, we can define $\Sigma_s$ to be $\{S\}$. Moreover, $\M^{u^+}$ possesses $(n)$. Also, $(n)$ does not broken the previous properties. Therefore, $\Gamma$ is also satisfiable in a neighborhood model satisfying $(n)$.
\end{proof}

\begin{corollary}
The following soundness and completeness results hold:
\begin{enumerate}
\item ${\bf E^W}$ is sound and strongly complete with respect to the class of $(n)$-frames;
\item ${\bf M^W}$ is sound and strongly complete with respect to the class of $(mn)$-frames;
\item ${\bf EC^W}$ is sound and strongly complete with respect to the class of $(cn)$-frames;
\item ${\bf K^W}$ is sound and strongly complete with respect to the class of filters.
\end{enumerate}
\end{corollary}}

\subsubsection{Systems including $\texttt{WM}$}

To deal with the completeness of the systems including $\texttt{WM}$, we need to redefine the canonical neighborhood function. The reason is as follows. If we continue using the canonical neighborhood function in Def.~\ref{def.cm-ew} and the strategy of supplementation (like the case in monotone modal logics), then we also need a rule $\dfrac{\phi\to\psi}{W\phi\to W\psi}$ in the systems. However, this rule is not sound, as one may easily check.

The following canonical neighborhood function is found to satisfy the requirement.

\begin{definition}\label{def.cm-mw}
Let ${\bf L}$ be a system including $\texttt{WM}$. A triple $\M^{\bf L}=\lr{S^{\bf L},N^{\bf L},V^{\bf L}}$ is a {\em canonical neighborhood model} for ${\bf L}$, if
\begin{itemize}
\item $S^{\bf L}=\{s\mid s\text{ is a maximal consistent set of }{\bf L}\}$,
\item $|\phi|\in N^{\bf L}(s)$ iff $W\phi\vee\phi\in s$,
\item $V^{\bf L}(p)=\{s\in S^{\bf L}\mid p\in s\}$.
\end{itemize}
\end{definition}

The reader may ask why we do not use this definition for systems excluding $\texttt{WM}$. This is because it does not work for system ${\bf EC^W}$ (Thm.~\ref{thm.comp-ecw}), as one may check.

Note that Def.~\ref{def.cm-mw} does not specify the function $N^{\bf L}$ completely; in addition to the proof sets that satisfy this definition, $N^{\bf L}$ may also contain non-proof sets relative to ${\bf L}$. Therefore, each of such logics has many canonical neighborhood models.

We need also show that $N^{\bf L}$ is well defined.

\begin{lemma}
Let ${\bf L}$ be a system including $\texttt{WM}$. If $|\phi|=|\psi|$ and $|\phi|\in N^{\bf L}(s)$, then $W\psi\vee\psi\in s$.
\end{lemma}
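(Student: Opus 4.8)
The plan is to reduce the claim to propositional closure of maximal consistent sets, exactly mirroring the well-definedness lemma already proved for the systems that exclude $\texttt{WM}$. The only structural difference is that the defining condition now reads $W\phi\vee\phi\in s$ rather than $W\phi\in s$, so I must transport a whole disjunction across a provable equivalence instead of a single $W$-formula.

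First I would invoke the standard fact that, since every such ${\bf L}$ contains $\texttt{PL}$ and is closed under $\texttt{MP}$, the equality $|\phi|=|\psi|$ is equivalent to $\vdash\phi\lra\psi$. Indeed, if $\phi$ and $\psi$ were not provably equivalent, then one of $\phi\land\neg\psi$ or $\psi\land\neg\phi$ would be ${\bf L}$-consistent and would extend (by Lindenbaum) to a maximal consistent set containing exactly one of $\phi,\psi$, contradicting $|\phi|=|\psi|$. Hence $\vdash\phi\lra\psi$.

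Next, from $\vdash\phi\lra\psi$ I would apply the rule $\texttt{REW}$ to obtain $\vdash W\phi\lra W\psi$. Combining this with $\vdash\phi\lra\psi$ by propositional reasoning gives $\vdash (W\phi\vee\phi)\lra(W\psi\vee\psi)$. Now the hypothesis $|\phi|\in N^{\bf L}(s)$ unpacks, by Def.~\ref{def.cm-mw}, to $W\phi\vee\phi\in s$; since $s$ is deductively closed under $\texttt{MP}$, the displayed provable equivalence yields $W\psi\vee\psi\in s$, which is precisely the conclusion.

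I do not expect a genuine obstacle here: the axiom $\texttt{WM}$ plays no role, and well-definedness rests solely on $\texttt{REW}$ together with propositional logic, just as in the case excluding $\texttt{WM}$. The only point meriting care is the first step's use of the separation argument for maximal consistent sets, and observing that by symmetry the converse implication holds as well, so that the biconditional defining $N^{\bf L}$ in Def.~\ref{def.cm-mw} is genuinely independent of which formula $\phi$ is chosen to represent a given neighborhood.
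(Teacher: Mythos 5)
Your proposal is correct and follows essentially the same route as the paper's own proof: extract $\vdash\phi\lra\psi$ from $|\phi|=|\psi|$, apply $\texttt{REW}$ to get $\vdash W\phi\lra W\psi$, and then push the disjunction $W\phi\vee\phi\in s$ across the resulting provable equivalence using the deductive closure of the maximal consistent set $s$. The only difference is that you spell out the Lindenbaum-style justification of the step from $|\phi|=|\psi|$ to $\vdash\phi\lra\psi$, which the paper takes as standard.
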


\begin{proof}
Suppose that $|\phi|=|\psi|$ and $|\phi|\in N^{\bf L}(s)$, to prove that $W\psi\vee\psi\in s$. By supposition, $\vdash\phi\lra \psi$ and $W\phi\vee\phi\in s$. By $\texttt{REW}$, we have $\vdash W\phi\lra W\psi$, and thus $W\psi\vee\psi\in s$.
\end{proof}

\begin{lemma} Let $\M^{\bf L}$ be a canonical neighborhood model for any system extending ${\bf M^W}$. Then for all $\phi\in\mathcal{L}(W)$, for all $s\in S^{\bf L}$, we have $\M^{\bf L},s\vDash \phi\iff \phi\in s$, that is, $\phi^{\M^{\bf L}}=|\phi|$.
\end{lemma}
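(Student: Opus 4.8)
The plan is to establish the truth lemma by induction on the structure of $\phi$. The atomic case follows from the definition of $V^{\bf L}$, and the Boolean cases ($\neg$, $\land$) are routine using that each $s\in S^{\bf L}$ is a maximal consistent set; so the only case that requires work is $W\phi$, where the induction hypothesis supplies $\phi^{\M^{\bf L}}=|\phi|$. Thus I would fix $s\in S^{\bf L}$ and reduce the goal to showing $\M^{\bf L},s\vDash W\phi\iff W\phi\in s$.

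First I would unfold the semantics: $\M^{\bf L},s\vDash W\phi$ holds iff $\phi^{\M^{\bf L}}\in N^{\bf L}(s)$ and $s\notin\phi^{\M^{\bf L}}$. Applying the induction hypothesis $\phi^{\M^{\bf L}}=|\phi|$ and the clause defining $N^{\bf L}$ (namely $|\phi|\in N^{\bf L}(s)$ iff $W\phi\vee\phi\in s$), this is equivalent to: $W\phi\vee\phi\in s$ and $\phi\notin s$. Here it is important that although $N^{\bf L}(s)$ may contain non-proof sets, these never equal the proof set $|\phi|$, so they do not interfere with the equivalence. It therefore remains to verify the purely propositional equivalence, internal to the maximal consistent set $s$,
\[
(W\phi\vee\phi\in s\text{ and }\phi\notin s)\iff W\phi\in s.
\]

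For the left-to-right direction I would use that $\phi\notin s$ gives $\neg\phi\in s$ by maximality, and then disjunctive syllogism --- the tautology $((W\phi\vee\phi)\land\neg\phi)\to W\phi$ together with $\texttt{MP}$ --- yields $W\phi\in s$. For the converse, $W\phi\in s$ gives $W\phi\vee\phi\in s$ by $\texttt{PL}$, while axiom $\texttt{WE}$ (i.e.\ $W\phi\to\neg\phi$) forces $\neg\phi\in s$, that is, $\phi\notin s$. This closes the induction.

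The point I would flag is that there is no deep obstacle, but the genuinely delicate interplay is the role of the extra disjunct $\phi$ in the definition of $N^{\bf L}$: it is precisely the combination of $W\phi\vee\phi\in s$ with the falsity condition $\phi\notin s$ that collapses neighborhood membership back to $W\phi\in s$, which is why the simpler clause $N^{\bf L}(s)=\{|\phi|\mid W\phi\in s\}$ used for systems excluding $\texttt{WM}$ must be replaced here. Note also that $\texttt{WM}$ itself plays no role in this lemma; it will only be needed afterwards to show that (the supplementation of) $\M^{\bf L}$ has property $(m)$, which is a separate step in the completeness argument.
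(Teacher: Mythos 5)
Your proof is correct and follows essentially the same route as the paper: induction on $\phi$ with the $W\phi$ case handled via the defining clause $|\phi|\in N^{\bf L}(s)\iff W\phi\vee\phi\in s$, using axiom $\texttt{WE}$ for one direction and maximality plus disjunctive syllogism for the other. The only difference is presentational—you package the two directions as a single propositional equivalence inside the maximal consistent set—and your side remarks (non-proof sets not interfering, $\texttt{WM}$ playing no role in this lemma) match the paper's own observations.
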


\begin{proof}
By induction on $\phi$, where the nontrivial case is $W\phi$.

Suppose that $W\phi\in s$, to show that $\M^{\bf L},s\vDash W\phi$. By supposition and axiom $\texttt{WE}$, we derive that $\neg\phi\in s$, viz., $\phi\notin s$, then by IH, we obtain $\M^{\bf L},s\nvDash \phi$. It suffices to show that $\phi^{\M^{\bf L}}\in N^{\bf L}(s)$, which is equivalent to showing that $|\phi|\in N^{\bf L}(s)$ by IH. This follows directly from the fact that $W\phi\vee\phi\in s$.

Conversely, suppose that $\M^{\bf L},s\vDash W\phi$, to prove that $W\phi\in s$. By supposition and IH, $|\phi|\in N^{\bf L}(s)$ and $\phi\notin s$, which implies that $W\phi\vee\phi\in s$. Therefore, $W\phi\in s$.
\end{proof}

Given any system ${\bf L}$ extending ${\bf M^W}$, the minimal canonical neighborhood model for ${\bf L}$, denoted $\M^{\bf L}_{min}=\lr{S^{\bf L},N^{\bf L}_{min},V^{\bf L}}$, is defined such that $N^{\bf L}_{min}(s)=\{|\phi|\mid W\phi\vee\phi\in s\}$. Similar to the cases in monotone modal logic and ${\bf M^\circ}$, due to the existence of non-proof sets, the canonical neighborhood function $N^{\bf L}_{min}$ is not necessarily supplemented. So again, we use the strategy of supplementation. The notion of supplementation can be found in Def.~\ref{def.supplementation}.

\begin{theorem}\label{thm.comp-mw}
${\bf M^W}$ is sound and strongly complete with respect to the class of $(m)$-frames.
\end{theorem}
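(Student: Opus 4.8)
The plan is to treat soundness and completeness separately, leaning on the results already established for ${\bf E^W}$ and on the supplementation technique used earlier for ${\bf M^\circ}$. For soundness, by the soundness of ${\bf E^W}$ (Thm.~\ref{thm.comp-ew}) it suffices to check that the axiom $\texttt{WM}$ is valid on every $(m)$-frame. I would fix an $(m)$-model $\M=\lr{S,N,V}$ and a state $s$ with $\M,s\vDash W(\phi\land\psi)\land\neg\psi$, so that $(\phi\land\psi)^\M\in N(s)$, $s\notin(\phi\land\psi)^\M$, and $s\notin\psi^\M$. Since $(\phi\land\psi)^\M=\phi^\M\cap\psi^\M\subseteq\psi^\M$, monotonicity $(m)$ gives $\psi^\M\in N(s)$; together with $s\notin\psi^\M$ this yields $\M,s\vDash W\psi$, as required.

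For completeness, I would start from the minimal canonical model $\M^{\bf L}_{min}=\lr{S^{\bf L},N^{\bf L}_{min},V^{\bf L}}$ for ${\bf L}={\bf M^W}$, where $N^{\bf L}_{min}(s)=\{|\phi|\mid W\phi\vee\phi\in s\}$, and form its supplementation $(\M^{\bf L}_{min})^+$ as in Def.~\ref{def.supplementation}. By construction $(\M^{\bf L}_{min})^+$ possesses $(m)$, and the truth lemma for canonical neighborhood models of systems extending ${\bf M^W}$ is already available. Hence it is enough to verify that $(\M^{\bf L}_{min})^+$ is genuinely a canonical neighborhood model in the sense of Def.~\ref{def.cm-mw}, i.e.\ that for all $s\in S^{\bf L}$ and all $\phi$,
$$|\phi|\in (N^{\bf L}_{min})^+(s)\iff W\phi\vee\phi\in s.$$
The direction ``$\Longleftarrow$'' is immediate, since $W\phi\vee\phi\in s$ puts $|\phi|$ into $N^{\bf L}_{min}(s)\subseteq (N^{\bf L}_{min})^+(s)$.

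The main obstacle is the direction ``$\Longrightarrow$'', and here I would exploit that $\texttt{RMW}$ is derivable in ${\bf M^W}$ (Prop.~\ref{prop.derivable-rmw}). Suppose $|\phi|\in (N^{\bf L}_{min})^+(s)$; then $|\chi|\subseteq|\phi|$ for some $\chi$ with $W\chi\vee\chi\in s$, and $|\chi|\subseteq|\phi|$ yields $\vdash\chi\to\phi$. Since $s$ is maximal consistent and $W\chi\vee\chi\in s$, either $\chi\in s$ or $W\chi\in s$. If $\chi\in s$, then $\phi\in s$ and so $W\phi\vee\phi\in s$. If instead $W\chi\in s$, I would argue by cases on $\phi$: when $\phi\in s$ we are done immediately, and when $\neg\phi\in s$ we have $W\chi\land\neg\phi\in s$, so applying the theorem $\vdash(W\chi\land\neg\phi)\to W\phi$ obtained by $\texttt{RMW}$ from $\chi\to\phi$ forces $W\phi\in s$. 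In every case $W\phi\vee\phi\in s$, establishing the biconditional.

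With this biconditional in hand, $(\M^{\bf L}_{min})^+$ is a canonical neighborhood model for ${\bf M^W}$, so the truth lemma gives $(\M^{\bf L}_{min})^+,s\vDash\phi\iff\phi\in s$ for every $s$ and every $\phi\in\mathcal{L}(W)$. Any ${\bf M^W}$-consistent set extends to some $s\in S^{\bf L}$ and is therefore satisfied at $s$ in the $(m)$-model $(\M^{\bf L}_{min})^+$, which yields strong completeness with respect to the class of $(m)$-frames. I expect the delicate point to be precisely the ``$\Longrightarrow$'' step above: one must combine the disjunction $W\chi\vee\chi\in s$ with the maximal-consistency case split in exactly the right way so that $\texttt{RMW}$ can be invoked, since a naive monotonicity rule $\frac{\phi\to\psi}{W\phi\to W\psi}$ is unsound and unavailable.
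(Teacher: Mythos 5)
Your proposal is correct and follows essentially the same route as the paper: soundness via the validity of $\texttt{WM}$ over $(m)$-frames, and completeness by supplementing the minimal canonical model and verifying $|\phi|\in (N^{\bf L}_{min})^+(s)\iff W\phi\vee\phi\in s$ using the derivable rule $\texttt{RMW}$. The only cosmetic difference is that where you split into cases ($\chi\in s$ vs.\ $W\chi\in s$, then $\phi\in s$ vs.\ $\neg\phi\in s$) inside the maximal consistent set, the paper derives the single implication $\vdash W\chi\vee\chi\to W\phi\vee\phi$ syntactically and then applies closure of $s$ under provable implication --- logically the same argument.
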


\begin{proof}
For the soundness, by Thm.~\ref{thm.comp-ew}, it remains to show the validity of $\texttt{WM}$. For this, let $\M=\lr{S,N,V}$ be an $(m)$-model and $s\in S$.

Suppose that $\M,s\vDash W(\phi\land\psi)\land\neg\psi$, to demonstrate that $\M,s\vDash W\psi$. By supposition, we have $(\phi\land\psi)^\M\in N(s)$, that is to say, $\phi^\M\cap \psi^\M\in N(s)$. Since $s\in (\neg\psi)^\M$, we have $s\notin \psi^\M$. By $(m)$, we derive that $\psi^\M\in N(s)$.  Therefore, $\M,s\vDash W\psi$.

\medskip

For the completeness, define the supplementation of $\M^{\bf L}_{min}$ and denote it $(\M^{\bf L}_{min})^+$. By definition of supplementation, $(\M^{\bf L}_{min})^+$ possesses $(m)$. Thus the remainder is to prove that $(\M^{\bf L}_{min})^+$ is indeed a canonical neighborhood model for ${\bf M^W}$. That is, for every $s\in S^c$, for every $\phi\in\mathcal{L}(W)$, we have
$$|\phi|\in (N^{\bf L}_{min})^+(s)\iff W\phi\vee\phi\in s.$$

The proof is as follows.

`$\Longleftarrow$': This follows immediately from the fact that $N^{\bf L}_{min}(s)\subseteq (N^{\bf L}_{min})^+(s)$.

`$\Longrightarrow$': Suppose that $|\phi|\in (N^{\bf L}_{min})^+(s)$, then there exists $X\in N^{\bf L}_{min}(s)$ such that $X\subseteq |\phi|$. Since $X\in N^{\bf L}_{min}(s)$, there must be a $\chi$ such that $X=|\chi|$. By $|\chi|\in N^{\bf L}_{min}(s)$, we have $W\chi\vee\chi\in s$. From $|\chi|\subseteq |\phi|$ it follows that $\vdash\chi\to\phi$. Note that the rule $\texttt{RMW}$ is derivable in ${\bf M^W}$ (Prop.~\ref{prop.derivable-rmw}). Thus an application of $\texttt{RMW}$ gives us $\vdash W\chi\land\neg\phi\to W\phi$, that is, $\vdash W\chi\to W\phi\vee\phi$. From $\vdash \chi\to\phi$ it also follows that $\vdash \chi\to W\phi\vee\phi$, and then $\vdash W\chi\vee\chi\to W\phi\vee\phi$, and therefore $W\phi\vee\phi\in s$, as required.
\end{proof}

\weg{
\begin{definition}\label{def.cm-ew}
The triple $\M^c=\lr{S^c,N^c,V^c}$ is the canonical model for ${\bf E^W}$, if
\begin{itemize}
\item $S^c=\{s\mid s\text{ is a maximal consistent set of }{\bf E^W}\}$,
\item $N^c(s)=\{|\phi|\mid W\phi\in s\}$,
\item $V^c(p)=\{s\in S^c\mid p\in s\}$.
\end{itemize}
\end{definition}

\begin{lemma}
For all $\phi\in\mathcal{L}(W)$, for all $s\in S^c$, we have $\M^c,s\vDash \phi\iff \phi\in s$, that is, $\phi^{\M^c}=|\phi|$.
\end{lemma}

\begin{proof}
The proof goes on induction on $\phi$, where the nontrivial case is $W\phi$.

Suppose that $W\phi\in s$, to show $\M^c,s\vDash W\phi$. By supposition and axiom $\texttt{WE}$, we have $\neg\phi\in s$, thus $\phi\notin s$. By IH, this means that $\M^c,s\nvDash\phi$. Using the supposition again, we obtain $|\phi|\in N^c(s)$, by IH, $\phi^{\M^c}\in N^c(s)$. Therefore, $\M^c,s\vDash W\phi$.

Conversely, assume that $W\phi\notin s$, to prove that $\M^c,s\nvDash W\phi$. By supposition, we get $|\phi|\notin N^c(s)$. By IH, $\phi^{\M^c}\notin N^c(s)$, and therefore $\M^c,s\nvDash W\phi$.
\end{proof}}

\weg{We need also show that $N^c$ is well defined.
\begin{lemma}
If $|\phi|=|\psi|$ and $|\phi|\in N^c(s)$, then $W\psi\vee\psi\in s$.
\end{lemma}

\begin{proof}
Suppose that $|\phi|=|\psi|$ and $|\phi|\in N^c(s)$, to prove that $W\psi\vee\psi\in s$. By supposition, $\vdash\phi\lra \psi$ and $W\phi\vee\phi\in s$. By $\texttt{REW}$, we have $\vdash W\phi\lra W\psi$, and thus $W\psi\vee\psi\in s$.
\end{proof}

Then with a standard work we can show the completeness. Moreover, the soundness is straightforward by semantics.
\begin{theorem}\label{thm.comp-ew}
${\bf E^W}$ is sound and strongly complete with respect to the class of all neighborhood frames.
\end{theorem}


\begin{proposition}\label{thm.comp-ecw}
${\bf EC^W}$ is sound and strongly complete with respect to the class of $(c)$-frames.
\end{proposition}

\begin{proof}
For the soundness, by Thm.~\ref{thm.comp-ew}, it suffices to show the validity of $\texttt{WC}$. For this, let $\M=\lr{S,N,V}$ and $s\in S$ such that $\M,s\vDash W\phi\land W\psi$. Then $\phi^\M\in N(s)$ and $s\notin \phi^\M$, and $\psi^\M\in N(s)$ and $s\notin \psi^\M$. From $\phi^\M\in N(s)$ and $\psi^\M\in N(s)$ and $(c)$, it follows that $\phi^\M\cap \psi^\M\in N(s)$, that is, $(\phi\land\psi)^\M\in N(s)$; from $s\notin \phi^\M$ it follows that $s\notin (\phi\land\psi)^\M$. Therefore, $\M,s\vDash W(\phi\land\psi)$.

For the completeness, define the canonical model $\M^c$ w.r.t. ${\bf EC^W}$ as in Def.~\ref{def.cm-ew}. By Thm.~\ref{thm.comp-ew}, it is sufficient to prove that $N^c$ has the property $(c)$. Suppose that $X\in N^c(s)$ and $Y\in N^c(s)$, then there are $\phi,\psi$ such that $X=|\phi|$ and $Y=|\psi|$. From $|\phi|\in N^c(s)$ and $|\psi|\in N^c(s)$, it follows that $W\phi\vee\phi\in s$ and $W\psi\vee\psi\in s$. By axiom $\texttt{WC}$, we obtain $W(\phi\land\psi)\in s$, thus $|\phi\land\psi|\in N^c(s)$, namely $X\cap Y\in N^c(s)$.
\end{proof}
}

\weg{\begin{lemma}
For all $\phi\in\mathcal{L}(W)$, for all $s\in S^c$, we have $\M^c,s\vDash \phi\iff \phi\in s$, that is, $\phi^{\M^c}=|\phi|$.
\end{lemma}

\begin{proof}
For the case $W\phi$, suppose that $W\phi\in s$, to show that $\M^c,s\vDash W\phi$. By supposition, we derive that $\neg\phi\in s$, viz., $\phi\notin s$, then by IH, we obtain $\M^c,s\nvDash \phi$. It suffices to show that $\phi^{\M^c}\in N^c(s)$, which is equivalent to showing that $|\phi|\in N^c(s)$ by IH. For this, assume for an arbitrary $\psi$ that $W\psi\in s$, then by ..., we have $W(\phi\land\psi)\in s$.

Conversely, suppose that $\M^c,s\vDash W\phi$, to prove that $W\phi\in s$. By supposition and IH, $|\phi|\in N^c(s)$ and $\phi\notin s$.
\end{proof}}

It is shown in~\cite[Thm.~2.2, Coro.~2.7]{GilbertVenturi:2017} that ${\bf K^W}$ (denoted ${\bf A_K}$ there) is sound and complete with respect to the class of all neighborhood frames that are closed under binary intersections and are negatively supplemented, where a neighborhood frame $\mathcal{F}=\lr{S,N}$ is said to be {\em negatively supplemented} if for all $s\in S$ and $X,Y\subseteq S$, if $X\in N(s)$, $X\subseteq Y$ and $s\notin Y$, then $Y\in N(s)$. Notice that the notion of negative supplementation is weaker than that of supplementation.\footnote{For us, `weakly supplemented' seems a better term than `negatively supplemented', partly because the notion is indeed weaker than supplementation, and partly because it is not actually to negate supplementation; rather, it only adds a negative condition to the property of supplementation.} We have seen that ${\bf M^W}$ characterizes the class of neighborhood frames that are supplemented. Thus it is quite natural to ask which logic characterizes the class of neighborhood frames that are negative supplemented. As we will see, ${\bf M^W}$ does this job as well.


\weg{\begin{lemma}
For every $s\in S^c$,
$$|\phi|\in (N^c)^+(s)\iff W\phi\vee\phi\in s.$$
\end{lemma}

\begin{proof}
`$\Longleftarrow$': This follows immediately from the fact that $N^c(s)\subseteq (N^c)^+(s)$.

`$\Longrightarrow$': Suppose that $|\phi|\in (N^c)^+(s)$, then there exists $X\in N^c(s)$ such that $X\subseteq |\phi|$. Since $X\in N^c(s)$, there must be a $\chi$ such that $X=|\chi|$. By $|\chi|\in N^c(s)$, we have $W\chi\vee\chi\in s$. From $|\chi|\subseteq |\phi|$ it follows that $\vdash\chi\to\phi$. Note that the rule $\texttt{RMW}$ is derivable in ${\bf M^W}$ (Prop.~\ref{prop.derivable-rmw}). Thus an application of $\texttt{RMW}$ gives us $\vdash W\chi\land\neg\phi\to W\phi$, that is, $\vdash W\chi\to W\phi\vee\phi$. From $\vdash \chi\to\phi$ it also follows that $\vdash \chi\to W\phi\vee\phi$, and then $\vdash W\chi\vee\chi\to W\phi\vee\phi$, and therefore $W\phi\vee\phi\in s$.
\end{proof}}

\weg{\begin{definition}
The triple $\M^c=\lr{S^c,N^c,V^c}$ is the canonical model for ${\bf M^W}$, if
\begin{itemize}
\item $S^c=\{s\mid s\text{ is a maximal consistent set}\}$,
\item $|\phi|\in N^c(s)$ iff $W(\phi\vee\psi)\vee(\phi\vee\psi)\in s$ for all $\psi$,
\item $V^c(p)=\{s\in S^c\mid p\in s\}$.
\end{itemize}
\end{definition}

\begin{lemma}
For all $\phi\in\mathcal{L}(W)$, for all $s\in S^c$, we have $\M^c,s\vDash \phi\iff \phi\in s$, that is, $\phi^{\M^c}=|\phi|$.
\end{lemma}

\begin{proof}
For the case $W\phi$, suppose that $W\phi\in s$, to show that $\M^c,s\vDash W\phi$. By supposition, we derive that $\neg\phi\in s$, viz., $\phi\notin s$, then by IH, we obtain $\M^c,s\nvDash \phi$. It suffices to show that $\phi^{\M^c}\in N^c(s)$, which is equivalent to showing that $|\phi|\in N^c(s)$ by IH. If not, then $W(\phi\vee\psi)\vee(\phi\vee\psi)\notin s$ for some $\psi$. Then from $\neg(\phi\vee\psi)\in s$ and supposition and axiom ..., it follows that $W(\phi\vee\psi)\in s$: a contradiction.

Conversely, suppose that $\M^c,s\vDash W\phi$, to prove that $W\phi\in s$. By supposition and IH, $|\phi|\in N^c(s)$ and $\phi\notin s$. Then $W(\phi\vee\psi)\vee(\phi\vee\psi)\in s$ for all $\psi$; in particular, $W\phi\vee\phi\in s$. Then $W\phi\in s$.
\end{proof}

\weg{\begin{lemma}
For all $\phi\in\mathcal{L}(W)$, for all $s\in S^c$, we have $\M^c,s\vDash \phi\iff \phi\in s$, that is, $\phi^{\M^c}=|\phi|$.
\end{lemma}

\begin{proof}
For the case $W\phi$, suppose that $W\phi\in s$, to show that $\M^c,s\vDash W\phi$. By supposition, we derive that $\neg\phi\in s$, viz., $\phi\notin s$, then by IH, we obtain $\M^c,s\nvDash \phi$. It suffices to show that $\phi^{\M^c}\in N^c(s)$, which is equivalent to showing that $|\phi|\in N^c(s)$ by IH. For this, assume for an arbitrary $\psi$ that $W\psi\in s$, then by ..., we have $W(\phi\land\psi)\in s$.

Conversely, suppose that $\M^c,s\vDash W\phi$, to prove that $W\phi\in s$. By supposition and IH, $|\phi|\in N^c(s)$ and $\phi\notin s$.
\end{proof}}

\begin{lemma}
For every $s\in S^c$,
$$|\phi|\in (N^c)^+(s)\iff W(\phi\vee\psi)\vee(\phi\vee\psi)\in s\text{ for all }\psi.$$
\end{lemma}

\begin{proof}
`$\Longleftarrow$': This follows immediately from the fact that $N^c(s)\subseteq (N^c)^+(s)$.

`$\Longrightarrow$': Suppose that $|\phi|\in (N^c)^+(s)$, then there exists $X\in N^c(s)$ such that $X\subseteq |\phi|$. Since $X\in N^c(s)$, there must be a $\chi$ such that $X=|\chi|$. By $|\chi|\in N^c(s)$, we have $W(\chi\vee\psi)\vee(\chi\vee\psi)\in s\text{ for all }\psi$; in particular, $W(\chi\vee\phi\vee\psi)\vee(\chi\vee\phi\vee\psi)\in s$. From $|\chi|\subseteq |\phi|$ it follows that $\vdash\chi\to\phi$, then $\vdash \chi\vee\phi\vee\psi\lra \phi\vee\psi$, thus $\vdash W(\chi\vee\phi\vee\psi)\lra W(\phi\vee\psi)$, which implies that $W(\phi\vee\psi)\vee (\phi\vee\psi)\in s$. Since $\psi$ is arbitrary, we conclude that $W(\phi\vee\psi)\vee(\phi\vee\psi)\in s\text{ for all }\psi$, as desired.
\end{proof}}

\begin{corollary}
${\bf M^W}$ is sound and strongly complete with respect to the class of neighborhood frames that are negatively supplemented.
\end{corollary}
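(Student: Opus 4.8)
The plan is to leverage the two facts already in hand for ${\bf M^W}$: its soundness and strong completeness over the class of $(m)$-frames (Thm.~\ref{thm.comp-mw}), and the soundness of its sublogic ${\bf E^W}$ over the class of \emph{all} neighborhood frames (Thm.~\ref{thm.comp-ew}). The only genuinely new work concerns soundness; completeness will come essentially for free. The key structural observation, already recorded in the surrounding text, is that negative supplementation is obtained from $(m)$ by merely adding the side condition $s\notin Y$, so every supplemented frame is negatively supplemented, i.e.\ the class of negatively supplemented frames is a \emph{superclass} of the class of $(m)$-frames.

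For completeness I would argue that strong completeness over a class of frames is automatically inherited by any larger class. Since, by Thm.~\ref{thm.comp-mw}, every ${\bf M^W}$-consistent set is satisfiable in a model based on an $(m)$-frame, and each $(m)$-frame is itself negatively supplemented, every such set is a fortiori satisfiable in a negatively supplemented model. Hence no fresh canonical construction is required: the supplemented canonical model $(\M^{\bf L}_{min})^+$ built in the proof of Thm.~\ref{thm.comp-mw} already lives in the required class, and strong completeness with respect to negatively supplemented frames follows at once.

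For soundness I would show that every theorem of ${\bf M^W}$ is valid on every negatively supplemented frame. The items inherited from ${\bf E^W}$ ($\texttt{PL}$, $\texttt{MP}$, $\texttt{WE}$, $\texttt{REW}$) are valid on all neighborhood frames by Thm.~\ref{thm.comp-ew}, so the one remaining obligation is the validity of $\texttt{WM}$. The semantic verification runs as follows: fix a negatively supplemented model $\M=\lr{S,N,V}$ and $s\in S$ with $\M,s\vDash W(\phi\land\psi)\land\neg\psi$. Then $(\phi\land\psi)^\M\in N(s)$, $s\notin(\phi\land\psi)^\M$, and $s\notin\psi^\M$. Putting $X=(\phi\land\psi)^\M=\phi^\M\cap\psi^\M$ and $Y=\psi^\M$, we have $X\in N(s)$, $X\subseteq Y$, and $s\notin Y$, so negative supplementation yields $Y=\psi^\M\in N(s)$; together with $s\notin\psi^\M$ this gives $\M,s\vDash W\psi$, establishing the validity of $\texttt{WM}$.

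The main thing to get right, rather than a genuine obstacle, is the interplay between the direction of class inclusion and the two halves of the result: completeness transfers to the larger class for free, whereas soundness must be re-checked there. What makes this work is that the extra hypothesis $s\notin Y$ in the definition of negative supplementation is exactly what the truth condition for $W$ supplies (the disjunct $\neg\psi$ forces $s\notin\psi^\M$). This matching of the side condition to the semantics of $W$ is precisely why the weaker negative supplementation already suffices, and why ${\bf M^W}$, rather than a stronger system, characterizes this class.
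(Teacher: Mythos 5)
Your proof is correct and takes essentially the same route as the paper: soundness is obtained by re-running the validity check for $\texttt{WM}$ with negative supplementation in place of $(m)$ (the required side condition $s\notin\psi^\M$ being exactly what the $\neg\psi$ conjunct supplies), and completeness is inherited from Thm.~\ref{thm.comp-mw} since every supplemented frame is negatively supplemented. Your explicit remark that completeness transfers to the larger class for free while soundness must be re-verified is precisely the (implicit) logic of the paper's own two-line proof.
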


\begin{proof}
The proof of the soundness is the same as in Thm.~\ref{thm.comp-mw}, by replacing $(m)$ with the property of `negative supplementation'.

The completeness also follows from Thm.~\ref{thm.comp-mw}, since negative supplementation is weaker than supplementation.
\end{proof}

We have the following conjecture. Note that the soundness is straightforward. In the current stage we do not know how to prove the completeness, because if we define $(\M^{\bf L}_{min})^+$ w.r.t. ${\bf K^W}$ as in Thm.~\ref{thm.comp-mw}, by Thm.~\ref{thm.comp-mw}, it suffices to prove that $(N^{\bf L}_{min})^+$ has $(c)$, which follows directly by Fact~\ref{fact.MtoM+} if $N^{\bf L}_{min}$ possesses $(c)$. But to show $N^{\bf L}_{min}$ possesses $(c)$, we again encounter the problem which is remarked immediately behind Def.~\ref{def.cm-mw}.\footnote{Note that we can prove the completeness based on the completeness of ${\bf K^W}$ w.r.t. the class of relational frames. Since it is shown that ${\bf K^W}$ is complete with respect to the class of relational frames~\cite{Steinsvold:falsebelief}, and each relational model has a pointwise equivalent augmented model (the proof is similar to the case in standard modal logic), and each augmented model is a filter, thus ${\bf K^W}$ is complete with respect to the class of filters, and hence also complete with respect to the class of $(mc)$-frames.}

\begin{conjecture}
${\bf K^W}$ is sound and strongly complete with respect to the class of filters, and also to the class of $(mc)$-frames.
\end{conjecture}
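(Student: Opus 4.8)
Soundness is immediate: both filters and $(mc)$-frames satisfy $(m)$ and $(c)$, and we already have the validity of $\texttt{WE}$ and $\texttt{WM}$ over $(m)$-frames (Thm.~\ref{thm.comp-mw}) together with that of $\texttt{WC}$ over $(c)$-frames (Prop.~\ref{thm.comp-ecw}); hence every ${\bf K^W}$-theorem is valid on every $(mc)$-frame and a fortiori on every filter. Since the class of filters is contained in the class of $(mc)$-frames, it suffices, for \emph{both} completeness claims, to satisfy every ${\bf K^W}$-consistent set in a model based on a filter. The plan is therefore to build, directly, a canonical \emph{filter} model, sidestepping the supplementation strategy whose failure is diagnosed in the remark preceding this statement.

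The obstruction to the minimal-model-plus-supplementation route is precisely the mixed case: if $W\phi\in s$ and $\psi\in s$ but $\phi\notin s$, then $W(\phi\land\psi)\vee(\phi\land\psi)$ need not lie in $s$ (indeed $W\phi\land\psi\to W(\phi\land\psi)$ is not even valid over filters), so $N^{\bf L}_{min}$ is not closed under intersections and Fact~\ref{fact.MtoM+} cannot be applied. I would avoid this by \emph{generating} the neighborhood filter rather than supplementing a minimal one. Concretely, let $S^c$ be the set of maximal consistent sets of ${\bf K^W}$, let $V^c$ be as usual, and define $N^c(s)$ to be the smallest filter on $S^c$ containing $\{|\chi|\mid W\chi\in s\}\cup\{S^c\}$, i.e. the family of all supersets of finite intersections of sets $|\chi|$ with $W\chi\in s$. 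By construction $N^c(s)$ has $(m)$, $(c)$ and $(n)$, so $\M^c=\lr{S^c,N^c,V^c}$ is based on a filter.

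It then remains to prove the Truth Lemma $\M^c,s\vDash\phi\iff\phi\in s$, whose only nontrivial case is $W\phi$, i.e. that $|\phi|\in N^c(s)$ and $\phi\notin s$ holds iff $W\phi\in s$. The right-to-left direction is immediate from $\texttt{WE}$ (giving $\phi\notin s$) and from the definition of $N^c(s)$. The converse is the crux and the main obstacle: I must show that $N^c(s)$ contains no \emph{forbidden} proof set, i.e. no $|\gamma|$ with $\gamma\notin s$ and $W\gamma\notin s$. Suppose some such $|\gamma|$ lay in $N^c(s)$; since $\gamma\notin s$ we have $|\gamma|\neq S^c$, so $|\alpha_1|\cap\dots\cap|\alpha_k|\subseteq|\gamma|$ for some $k\geq 1$ and some $\alpha_1,\dots,\alpha_k$ with each $W\alpha_i\in s$. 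Applying $\texttt{WC}$ repeatedly (using $\texttt{REW}$ to reassociate) collapses these to $W(\alpha_1\land\dots\land\alpha_k)\in s$, while $|\alpha_1\land\dots\land\alpha_k|\subseteq|\gamma|$ gives $\vdash(\alpha_1\land\dots\land\alpha_k)\to\gamma$; as $\neg\gamma\in s$, the derivable rule $\texttt{RMW}$ (Prop.~\ref{prop.derivable-rmw}) yields $W\gamma\in s$, contradicting $W\gamma\notin s$. Thus if $|\phi|\in N^c(s)$ and $\phi\notin s$ then $|\phi|$ is not forbidden, so $W\phi\in s$ (again via $\texttt{REW}$ to pass between provably equivalent formulas). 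This establishes the Truth Lemma, hence strong completeness over filters and, since filters are $(mc)$-frames, over $(mc)$-frames as well. As an independent cross-check, both conclusions also follow from the footnoted indirect route: ${\bf K^W}$ is complete over relational frames~\cite{Steinsvold:falsebelief}, and every relational model is pointwise equivalent to an augmented model, which is a filter.
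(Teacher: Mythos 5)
Your proposal is correct, and it does something the paper itself does not do: the paper leaves this statement as a \emph{conjecture}, asserting only that soundness is straightforward and explaining why its own method --- the minimal canonical model $\M^{\bf L}_{min}$ of Def.~\ref{def.cm-mw} followed by supplementation --- breaks down (one cannot show $N^{\bf L}_{min}$ has $(c)$, precisely because of the mixed case $W\phi\in s$, $\psi\in s$, $W\psi\notin s$ that you diagnose). The only completeness argument the paper offers is the indirect one in its footnote (relational completeness from~\cite{Steinsvold:falsebelief}, pointwise-equivalent augmented models, augmented models are filters), which you reproduce as a cross-check. Your direct construction is genuinely different and, as far as I can verify, sound: taking $N^c(s)$ to be the filter generated by $\{|\chi|\mid W\chi\in s\}\cup\{S^c\}$ gives $(m)$, $(c)$ and $(n)$ by construction, and the only danger is that closure under finite intersections and supersets might smuggle in a ``forbidden'' proof set $|\gamma|$ with $s\notin|\gamma|$ but $W\gamma\notin s$, which would break the left-to-right direction of the Truth Lemma. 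Your argument that this cannot happen is exactly right, and it uses precisely the two ingredients that distinguish ${\bf K^W}$ from the weaker systems: if $|\alpha_1|\cap\dots\cap|\alpha_k|\subseteq|\gamma|$ with each $W\alpha_i\in s$ and $k\geq 1$ (the case $k=0$ being excluded since $s\notin|\gamma|$ forces $|\gamma|\neq S^c$), then iterated $\texttt{WC}$ (with $\texttt{REW}$ for bracketing) gives $W(\alpha_1\land\dots\land\alpha_k)\in s$, and the derivable rule $\texttt{RMW}$ (Prop.~\ref{prop.derivable-rmw}) applied to $\vdash\alpha_1\land\dots\land\alpha_k\to\gamma$, together with $\neg\gamma\in s$, yields $W\gamma\in s$. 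Note that your $\M^c$ is \emph{not} a canonical neighborhood model in the sense of Def.~\ref{def.cm-mw} --- the ``iff'' there fails, since $\phi\in s$ alone does not put $|\phi|$ into your $N^c(s)$ --- but nothing requires it to be, as you prove the Truth Lemma directly for your model. In sum: the paper's footnote route is shorter but imports external relational results plus an unverified ``similar to standard modal logic'' transfer claim, whereas your route is self-contained within neighborhood semantics, gives strong completeness for filters and hence for $(mc)$-frames, and in effect settles the paper's conjecture affirmatively.
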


\weg{\begin{theorem}
${\bf K^W}$ is sound and strongly complete with respect to the class of $(mc)$-frames.
\end{theorem}

\begin{proof}
The soundness follows directly from the soundness of ${\bf M^W}$ (Thm.~\ref{thm.comp-mw}) and the validity of $\texttt{WC}$ (Thm.~\ref{thm.comp-ecw}).

For the completeness, define $(\M^{\bf L}_{min})^+$ w.r.t. ${\bf K^W}$ as in Thm.~\ref{thm.comp-mw}. By Thm.~\ref{thm.comp-mw}, it suffices to prove that $(N^{\bf L}_{min})^+$ has $(c)$. This follows from the fact that $N^c$ has $(c)$ (Thm.~\ref{thm.comp-ecw}) and the fact that the supplementation preserves $(c)$ (Fact~\ref{fact.MtoM+}).
\end{proof}

\begin{corollary}\cite[Thm.~2.2, Coro.~2.7]{GilbertVenturi:2017}
${\bf K^W}$ is sound and strongly complete with respect to the class of neighborhood frames that are closed under binary intersections and are negatively supplemented.
\end{corollary}}

\weg{\begin{theorem}
${\bf EMW}$ is sound and strongly complete with respect to the class of neighborhood frames that are supplemented and contain the unit.
\end{theorem}

\begin{proof}
Note that $S^c\in N^c(s)$ and thus $S^c\in (N^c)^+(s)$.
\end{proof}}

We close this section with a general soundness and completeness result. For those systems ${\bf L}$ including $\texttt{WM}$, as $\top\in s$, thus $W\top\vee\top\in s$, and hence $S^L=|\top|\in N^{\bf L}_{min}(s)$, then by Fact~\ref{fact.MtoM+}, we obtain that $S^{\bf L}\in (N^{\bf L}_{min})^+(s)$, which means that $(\M^{\bf L}_{min})^+$ possesses $(n)$.

However, for those systems ${\bf L}$ excluding $\texttt{WM}$, as $W\top\notin s$ (by axiom $\texttt{W1}$), by Def.~\ref{def.cm-ew}, we infer that $S^{\bf L}=|\top|\notin N^{\bf L}(s)$. Thus the canonical model $\M^{\bf L}$ for such systems ${\bf L}$ does not have $(n)$. We can handle this problem with Prop.~\ref{prop.equivalence-W}. The following general completeness result is a corollary of Prop.~\ref{prop.equivalence-W}. Note that the following result also holds for systems  including $\texttt{WM}$. 

\begin{theorem}\label{thm.generalcomp}
Let ${\bf L}$ be a system of $\mathcal{L}(W)$. If ${\bf L}$ is determined by a certain class of neighborhood frames, then it is also determined by the class of neighborhood frames satisfying $(n)$.
\end{theorem}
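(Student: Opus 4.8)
The plan is to establish both halves of ``determined by'' with respect to the subclass $\mathbb{C}_n$ of $\mathbb{C}$ consisting of those frames that additionally satisfy $(n)$, where $\mathbb{C}$ is the class determining ${\bf L}$ (so that for ${\bf M^W}$ over $(m)$-frames one lands on $(mn)$-frames, etc.). Soundness should be immediate: since $\mathbb{C}_n\subseteq\mathbb{C}$ and ${\bf L}$ is sound with respect to $\mathbb{C}$, every theorem of ${\bf L}$ is valid on every frame of $\mathbb{C}$ and hence on every frame of $\mathbb{C}_n$. All the real work therefore sits in completeness, and the single tool I would use there is Prop.~\ref{prop.equivalence-W}, which allows me to enlarge each neighborhood $N(w)$ by any family of sets that contain $w$ without disturbing the truth value of any $\mathcal{L}(W)$-formula.

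For strong completeness, suppose $\Gamma$ is an ${\bf L}$-consistent set. By completeness of ${\bf L}$ relative to $\mathbb{C}$, there are a model $\M=\lr{S,N,V}$ whose frame lies in $\mathbb{C}$ and a state $s\in S$ with $\M,s\vDash\Gamma$. I would then pass to $\M^{u^+}=\lr{S,N^{u^+},V}$ obtained by taking $\Sigma_w=\{S\}$ for every $w\in S$, so that $N^{u^+}(w)=N(w)\cup\{S\}$. This is a legitimate instance of Prop.~\ref{prop.equivalence-W}, since $w\in S$ gives $S\in U_w=\{X\subseteq S\mid w\in X\}$ and hence $\Sigma_w=\{S\}\subseteq U_w$. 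Prop.~\ref{prop.equivalence-W} then yields $\M^{u^+},w\vDash\alpha$ iff $\M,w\vDash\alpha$ for every $\alpha\in\mathcal{L}(W)$ and every $w$, so in particular $\M^{u^+},s\vDash\Gamma$; and $S\in N^{u^+}(w)$ for all $w$ shows $\M^{u^+}$ has $(n)$.

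The one step requiring genuine care, and the main obstacle, is checking that the augmented frame $\lr{S,N^{u^+}}$ is still a member of $\mathbb{C}$, so that it lands in $\mathbb{C}_n$ and not merely in the class of all $(n)$-frames; this is exactly the informal claim that adding the unit ``does not break the previous properties''. For the classes at issue it is a short case analysis: if $N$ has $(m)$ and $X\in N^{u^+}(w)$ with $X\subseteq Y$, then either $X=S$, forcing $Y=S\in N^{u^+}(w)$, or $X\in N(w)$, giving $Y\in N(w)\subseteq N^{u^+}(w)$; and if $N$ has $(c)$ and $X,Y\in N^{u^+}(w)$, then $X\cap Y$ is $Y$, $X$, or a set in $N(w)$ according as $X=S$, $Y=S$, or $X,Y\in N(w)$, and in each case lies in $N^{u^+}(w)$. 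Thus $(m)$, $(c)$, and their combination are preserved, covering ${\bf E^W}$, ${\bf M^W}$, ${\bf EC^W}$, and ${\bf K^W}$.

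If one prefers to avoid any such preservation check, the conclusion can instead be read as ``${\bf L}$ is determined by \emph{some} class of $(n)$-frames'', and that class can be taken to be the unit-augmentation $\{\lr{S,N^{u^+}}\mid\lr{S,N}\in\mathbb{C}\}$. Here Prop.~\ref{prop.equivalence-W} delivers \emph{both} directions: completeness exactly as above, and soundness because a theorem valid throughout $\mathbb{C}$ transfers pointwise to each augmented frame through the same equivalence (for every valuation and state, satisfaction in $\lr{S,N^{u^+},V}$ is equivalent to satisfaction in $\lr{S,N,V}$, which holds by soundness over $\mathbb{C}$). This framing sidesteps the property-preservation argument entirely, at the cost of describing the target class less explicitly.
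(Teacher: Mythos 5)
Your proposal is correct and follows essentially the same route as the paper: soundness from the inclusion of the $(n)$-restricted subclass of $\mathbb{C}$ in $\mathbb{C}$, and completeness by applying Prop.~\ref{prop.equivalence-W} with $\Sigma_w=\{S\}$ for every $w$ so that the unit is added to each neighborhood without disturbing satisfaction of $\Gamma$. The only difference is that you explicitly verify that unit-augmentation preserves $(m)$ and $(c)$ (and hence membership in the relevant classes), a step the paper dismisses with the unproved remark that ``$(n)$ does not broken the previous properties,'' so your write-up is, if anything, more careful than the paper's own proof.
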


\begin{proof}
Suppose that ${\bf L}$ is determined by a certain class $\mathbb{C}$ of neighborhood frames, to show that ${\bf L}$ is sound and strongly complete with respect to the class of neighborhood frames satisfying $(n)$. The soundness is clear, since the class of neighborhood frames satisfying $(n)$ is contained in $\mathbb{C}$.

For the completeness, by supposition, every ${\bf L}$-consistent set, say $\Gamma$, is satisfiable in a model based on the frame in $\mathbb{C}$. That is, there exists a model $\M=\lr{S,N,V}$ where $\lr{S,N}\in\mathbb{C}$ and a state $s\in S$ such that $\M,s\vDash\Gamma$. Now, applying Prop.~\ref{prop.equivalence-W}, we obtain that $\M^{u^+},s\vDash\Gamma$ for $\M^{u^+}=\lr{S,N^{u^+},V}$, where $N^{u^+}(s)=N(s)\cup \{S\}$. Note that the definition of $N^{u^+}$ is well defined, since in Prop.~\ref{prop.equivalence-W}, $\Sigma_s$ is an arbitrary subset of $U_s$ and $S\in U_s$ (as $s\in S$) thus $\{S\}\subseteq U_s$, we can define $\Sigma_s$ to be $\{S\}$. Moreover, $\M^{u^+}$ possesses $(n)$. Also, $(n)$ does not broken the previous properties. Therefore, $\Gamma$ is also satisfiable in a neighborhood model satisfying $(n)$.
\end{proof}

\begin{corollary}
The following soundness and completeness results hold:
\begin{enumerate}
\item ${\bf E^W}$ is sound and strongly complete with respect to the class of $(n)$-frames;
\item ${\bf M^W}$ is sound and strongly complete with respect to the class of $(mn)$-frames;
\item ${\bf EC^W}$ is sound and strongly complete with respect to the class of $(cn)$-frames.
\end{enumerate}
\end{corollary}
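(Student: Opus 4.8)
The plan is to obtain all three items as immediate instantiations of the general completeness result Thm~\ref{thm.generalcomp}, feeding it the base determination classes already established earlier in this subsection. For item~1, ${\bf E^W}$ is determined by the class of \emph{all} neighborhood frames by Thm~\ref{thm.comp-ew}; taking $\mathbb{C}$ to be the class of all frames and applying Thm~\ref{thm.generalcomp} yields at once that ${\bf E^W}$ is determined by the subclass satisfying $(n)$, which is exactly the class of $(n)$-frames. For item~2 I would take $\mathbb{C}$ to be the class of $(m)$-frames, for which ${\bf M^W}$ is determined by Thm~\ref{thm.comp-mw}; and for item~3 I would take $\mathbb{C}$ to be the class of $(c)$-frames, for which ${\bf EC^W}$ is determined by Prop~\ref{thm.comp-ecw}. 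In each case the conclusion of Thm~\ref{thm.generalcomp} is determination by the members of $\mathbb{C}$ that additionally satisfy $(n)$.

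The one point requiring care is that for items~2 and~3 the asserted target classes are the $(mn)$- and $(cn)$-frames, so I must confirm that intersecting the base property with $(n)$ is precisely what the construction inside Thm~\ref{thm.generalcomp} delivers. Inspecting that proof, the witnessing model is $\M^{u^+}$ obtained by setting $N^{u^+}(s)=N(s)\cup\{S\}$ for every $s\in S$; hence I only need to check that adjoining the unit $S$ to each neighborhood collection preserves $(m)$ and $(c)$. For $(m)$: the sole superset of $S$ is $S$ itself, which is already present, and adjoining $S$ cannot disturb closure under supersets of the previously present sets, so $(m)$ survives. For $(c)$: since $S\cap X=X$ for every $X\subseteq S$ and $S\cap S=S$, closure under binary intersections is likewise retained. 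This is exactly the observation, invoked in the proof of Thm~\ref{thm.generalcomp}, that adding $(n)$ does not break the previously established properties, and it guarantees that the transformed frame in item~2 still enjoys $(m)$ (hence lies among the $(mn)$-frames) and in item~3 still enjoys $(c)$ (hence lies among the $(cn)$-frames).

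Soundness for each item is immediate and needs no new work: in every case the $(n)$-enriched target class is a subclass of the corresponding base class $\mathbb{C}$, so soundness with respect to $\mathbb{C}$ already established earlier transfers downward. I do not anticipate any genuine obstacle here; once Thm~\ref{thm.generalcomp} is in hand the corollary is a routine bookkeeping exercise, the only substantive verification being the preservation of $(m)$ and $(c)$ under the unit-adjoining construction sketched above, which could alternatively be quoted from the proof of Thm~\ref{thm.generalcomp} directly.
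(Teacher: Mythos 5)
Your proposal is correct and follows essentially the same route as the paper: the corollary is obtained by instantiating Thm.~\ref{thm.generalcomp} with the determination results Thm.~\ref{thm.comp-ew}, Thm.~\ref{thm.comp-mw}, and Prop.~\ref{thm.comp-ecw}, reading its conclusion as determination by the frames of $\mathbb{C}$ that additionally satisfy $(n)$. Your explicit check that adjoining the unit preserves $(m)$ and $(c)$ is exactly the point the paper compresses into the remark that $(n)$ does not break the previous properties, so it is a welcome (but not divergent) elaboration.
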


\weg{\begin{corollary}
${\bf EMCW}$ is sound and strongly complete with respect to the class of neighborhood frames that are closed under binary intersections and are supplemented.
\end{corollary}

\begin{proof}
Follows from the fact that the supplementation is closed under binary intersections.
\end{proof}}

\weg{\begin{theorem}
${\bf EMCW}$ is sound and strongly complete with respect to the class of filters.
\end{theorem}}

\weg{\section{Logics for False beliefs}

\begin{definition} The language of the logic for false beliefs is defined recursively as follows:
$$\phi::=p\mid \neg\phi\mid \phi\land\phi\mid W\phi$$
\end{definition}

Given a model $\M=\lr{S,N,V}$ and a state $s\in S$,
$$
\begin{array}{lll}
\M,s\vDash W\phi&\iff &\phi^\M\in N(s)\text{ and }\M,s\nvDash\phi\\
\end{array}
$$

\subsection{Expressivity}

\begin{proposition}\label{prop.exp-lw}
$\mathcal{L}(W)$ is less expressive than $\mathcal{L}(\Box)$ over the class of all models, models satisfying $(m)$, $(c)$, $(n)$, $(r)$,
\end{proposition}

\begin{proof}
As $\vDash W\phi\lra \Box\phi\land\neg\phi$, $\mathcal{L}(W)\preceq \mathcal{L}(\Box)$. To show $\mathcal{L}(W)\prec\mathcal{L}(\Box)$, consider the following neighborhood models $\M=\lr{S,N,V}$ and $\M'=\lr{S',N',V'}$:
\begin{itemize}
\item $S=\{s,t\}$, $S'=\{s'\}$,
\item $N(s)=N(t)=\{\{s,t\}\}$, $N'(s')=\{\{s'\}\}$,
\item $V(p)=\{s\}$, $V'(p)=\{s'\}$.
\end{itemize}
The models are pictured as follows, where an arrow from a point to a set means that the set is a neighborhood of the point.
\weg{\[
\xymatrix{\M&s:p\ar[r]&\{s,t\}&t:\neg p\ar[l]&&\M'&s':p\ar[r]&\{s'\}\\}
\]}
\[
\xymatrix{\M&s:p\ar[r]&\{s,t\}&t:\neg p\ar[l]&&\M'&s':p\ar[r]&\{s'\}\\}
\]

Observe that both $\M$ and $\M'$ possess all stated neighborhood properties. Moreover, on the one hand, since $p^\M=\{s\}\notin N(s)$, we have that $\M,s\nvDash \Box p$; on the other hand, as $p^{\M'}=\{s'\}\in N'(s')$, we infer that $\M',s'\vDash\Box p$. This entails that $\mathcal{L}(\Box)$ can distinguish between these two pointed models.

However, $\mathcal{L}(W)$ cannot distinguish between $(\M,s)$ and $(\M',s')$. That is, for all $\phi\in\mathcal{L}(W)$, we have that $\M,s\vDash\phi$ iff $\M',s'\vDash\phi$. The proof goes by induction on $\phi$. The base and Boolean cases are straightforward. For the case $W\phi$, we have the following equivalences:
\[
\begin{array}{ll}
&\M,s\vDash W\phi\\
\iff &\phi^\M\in N(s)\text{ and }\M,s\nvDash\phi\\
\iff &\phi^\M\in \{\{s,t\}\}\text{ and }s\notin \phi^\M\\
\iff &\phi^\M=\{s,t\}\text{ and }s\notin \phi^\M\\
\iff &\text{false}
\end{array}
\]

\[
\begin{array}{ll}
&\M',s'\vDash W\phi\\
\iff &\phi^{\M'}\in N'(s')\text{ and }\M',s'\nvDash\phi\\
\iff &\phi^{\M'}\in \{\{s'\}\}\text{ and }s'\notin \phi^{\M'}\\
\iff &\phi^{\M'}=\{s'\}\text{ and }s'\notin \phi^{\M'}\\
\iff &\text{false}
\end{array}
\]

Notice that the above proof of the inductive case $W\phi$ does not use the induction hypothesis. Therefore, we conclude that $\M,s\vDash W\phi$ iff $\M',s'\vDash W\phi$.
\end{proof}

\subsection{Frame definability}

\begin{proposition}
The frame property $(m)$ is undefinable in $\mathcal{L}(W)$.
\end{proposition}

\begin{proof}
Consider the following frames:
$$
\xymatrix{\mathcal{F}&s\ar[r]&\{s\}&t&&\mathcal{F}'&s'\ar[r]&\{s'\}}
$$

One may check that $\mathcal{F}$ has no property $(m)$, since $\{s\}\in N(s)$ and $\{s\}\subseteq \{s,t\}$ but $\{s,t\}\notin N(s)$, while $\mathcal{F}'$ does have. In what follows, we demonstrate that for all $\phi\in\mathcal{L}(W)$, $$\mathcal{F}\vDash\phi\iff \mathcal{F}'\vDash\phi.$$

If $\mathcal{F}'\nvDash\phi$, then there exist a valuation $V'$ on $\mathcal{F}'$ such that $\lr{\mathcal{F}',V'},s'\nvDash\phi$. Define a valuation $V$ on $\mathcal{F}$ such that $V(s)=V'(s')$. Then similar to the corresponding proof of Prop.~\ref{prop.exp-lw}, we can show that $\lr{\mathcal{F},V},s\vDash\phi$ iff $\lr{\mathcal{F}',V'},s'\vDash\phi$. Therefore, $\lr{\mathcal{F},V},s\nvDash\phi$, and thus $\mathcal{F}\nvDash\phi$.

Conversely, if $\mathcal{F}\nvDash\phi$, then there is a valuation $V$ on $\mathcal{F}$ and a state $x$ such that $\lr{\mathcal{F},V},x\nvDash\phi$. Define a valuation $V'$ on $\mathcal{F}'$ such that $V'(s')=V(x)$. Then similar to the corresponding proof of Prop.~\ref{prop.exp-lw}, we can show that $\lr{\mathcal{F},V},x\vDash\phi$ iff $\lr{\mathcal{F}',V'},s'\vDash\phi$. Therefore, $\lr{\mathcal{F}',V'},s'\nvDash\phi$, and thus $\mathcal{F}'\nvDash\phi$.
\end{proof}

\weg{\begin{proposition}
The frame property $(c)$ is undefinable in $\mathcal{L}(W)$.
\end{proposition}

\begin{proof}
Consider the following frames:
$$
\xymatrix{&\{s\}&\{t\}&&&\{s'\}&&\{t'\}\\
\mathcal{F}&s\ar[u]\ar[ur]&t\ar[u]\ar[ul]&&\mathcal{F}'&s'\ar[u]\ar[urr]\ar[r]&\emptyset&t'\ar[u]\ar[ull]\ar[l]\\}
$$

From the above figure, we can see that $\mathcal{F}'$ possesses $(c)$ but $\mathcal{F}$ does not, since $\{s\}\in N(s)$ and $\{t\}\in N(s)$ but $\emptyset\notin N(s)$. In the sequel, we show that for all $\phi\in\mathcal{L}(W)$, we have $\mathcal{F}\vDash\phi$ iff $\mathcal{F}'\vDash\phi$.

First, suppose that $\mathcal{F}\nvDash\phi$, then there is a valuation $V$ on $\mathcal{F}$ and $x$ such that $\lr{\mathcal{F},V},x\nvDash\phi$. Define a valuation $V'$ on $\mathcal{F}'$ such that $V'(s')=V(s)$ and $V'(t')=V(t)$.
\end{proof}}

\begin{proposition}
The frame property $(n)$ is undefinable in $\mathcal{L}(W)$.
\end{proposition}

\begin{proof}
First, consider two neighborhood frames, which are easily adapted from the models in Prop.~\ref{prop.exp-lw}:
\[
\xymatrix{\mathcal{F}&s\ar[r]&\{s,t\}&t&&\mathcal{F}'&s'\ar[r]&\{s'\}\\}
\]

One may easily verify that $\mathcal{F}'$ possesses the property $(n)$ but $\mathcal{F}$ does not. Now we show that $(\star)$ $\mathcal{F}\vDash\phi$ iff $\mathcal{F}'\vDash\phi$ for all $\phi\in\mathcal{L}(W)$. If not, w.l.o.g. we may assume that $\mathcal{F}'\vDash\chi$ and $\mathcal{F}\nvDash\chi$ for some $\chi\in\mathcal{L}(W)$. Then there would be a valuation $V$ and an $x$ in $\mathcal{F}$ such that $\lr{\mathcal{F},V},x\nvDash\chi$. We define a valuation $V'$ on $\mathcal{F}'$ such that $x\in V(p)$ iff $s'\in V'(p)$ for all $p\in \BP$, and let $Z=\{(x,s')\}$. Then it is not hard to show that $(\lr{\mathcal{F},V},x)\bis_W(\lr{\mathcal{F}',V'},s')$ (If $x$ is $s$, then the proof goes as the remarks before Thm.~\ref{Thm.hm-bis-W}; if $x$ is $t$, then the proof is similar except that the $Z$-coherent pairs are $(\emptyset,\emptyset)$, $(\{s\}, \emptyset)$, $(\{t\},\{s'\})$ and $(\{s,t\},\{s'\})$). Then by Prop.~\ref{prop.inv-bis-W}, $\lr{\mathcal{F},V},x\vDash \phi$ iff $\lr{\mathcal{F}',V'},s'\vDash\phi$ for all $\phi\in\mathcal{L}(W)$, and thus $\lr{\mathcal{F}',V'},s'\nvDash\chi$, which entails that $\mathcal{F}'\nvDash\chi$: a contradiction. In this way we have shown $(\star)$.

If the property $(n)$ were definable in $\mathcal{L}(W)$, there would be a set $\Gamma\subseteq \mathcal{L}(W)$ such that for all $\mathcal{F}$, $\mathcal{F}\vDash\Gamma$ iff $\mathcal{F}$ possesses $(n)$. As $\mathcal{F}'$ possesses the property $(n)$ but $\mathcal{F}$ does not, we have that $\mathcal{F}'\vDash\Gamma$ but $\mathcal{F}\nvDash\Gamma$. However, from $(\star)$ it follows that $\mathcal{F}\vDash\Gamma$ iff $\mathcal{F}'\vDash\Gamma$: a contradiction.
\end{proof}

\begin{proposition}
The property $(w)$ is undefinable in $\mathcal{L}(W)$.
\end{proposition}

\begin{proof}
Consider the following frames:
\[
\xymatrix{\mathcal{F}&s\ar[r]&\emptyset&&\mathcal{F}'&s'\ar[r]&\{s'\}\\}
\]
It should be easily checked that $\mathcal{F}$ satisfies $(w)$, but $\mathcal{F}'$ fails. In what follows, we show that for all $\phi\in\mathcal{L}(W)$, we have
$$\mathcal{F}\vDash\phi\iff \mathcal{F}'\vDash\phi.$$
Suppose not, then w.l.o.g. we may assume that $\mathcal{F}\vDash\phi$ but $\mathcal{F}'\nvDash\phi$ for some $\phi$. Then there is a valuation $V'$ on $\mathcal{F}'$ such that $(\mathcal{F}',V'),s'\nvDash\phi$.
\end{proof}
}

\weg{\subsection{Bisimulation}

We use $\Gamma\Vdash_w\phi$ to denote that $\Gamma$ entails $\phi$ over the class of all $w$-models, that is, for every $w$-model $\M$ and every $s\in \M$, if $\M,s\Vdash\psi$ for every $\psi\in\Gamma$, then $\M,s\Vdash\phi$. The following is immediate by Props.~\ref{prop.w-model} and~\ref{Prop.w-M-model}.

\begin{corollary}
For all $\Gamma\cup\{\phi\}\subseteq \mathcal{L}(W)$, we have that $\Gamma\Vdash_w\phi\iff \Gamma\vDash\phi$. Therefore, for all $\phi\in\mathcal{L}(W)$, $\Vdash_{w}\phi\iff\vDash\phi$.
\end{corollary}

\begin{definition}[$W$-bisimulation]
Let $\M=\lr{S,N,V}$ and $\M'=\lr{S',N',V'}$ be neighborhood models. A nonempty relation $Z$ is said to be a {\em $W$-bisimulation} between $\M$ and $\M'$, if $sZs'$ implies the following:
\begin{enumerate}
\item[(Var)] $s\in V(p)$ iff $s'\in V'(p)$ for all $p\in \BP$.
\item[(W-Coh)] if $(X,X')$ is $Z$-coherent, then
$$[X\in N(s)\text{ and }s\notin X]\iff [X'\in N'(s')\text{ and }s'\notin X'].$$
\end{enumerate}
We say that $(\M,s)$ and $(\M',s')$ are {\em $W$-bisimilar}, notation: $(\M,s)\bis_W(\M',s')$, if there is a $W$-bisimulation between $\M$ and $\M'$ linking $s$ and $s'$.
\end{definition}

\begin{proposition}[Invariance under $\bis_W$]\label{prop.inv-bis-W}
Let $\M$ and $\M'$ be both neighborhood models, $s\in\M$, $s'\in\M'$. If $(\M,s)\bis_W(\M',s')$, then for every $\phi\in\mathcal{L}(W)$, $\M,s\vDash\phi$ iff $\M',s'\vDash\phi$.
\end{proposition}

\begin{proof}
We can show the statement by induction on $\phi$. But here we adopt another method. Since $(\M,s)\bis_W(\M',s')$, by Prop.~\ref{prop.W-bisimulation-w-bisimulation}, we have $(w(\M),s)\bis_{w}(w(\M'),s')$. Thus for every $\phi\in\mathcal{L}(W)$, $\M,s\vDash\phi$ iff (by Prop.~\ref{Prop.w-M-model}) $w(\M),s\Vdash\phi$ iff (by Prop.~\ref{prop.invariance-bis_w}) $w(\M'),s'\Vdash\phi$ iff (by Prop.~\ref{Prop.w-M-model}) $\M',s'\vDash\phi$.
\end{proof}

Coming back to Prop.~\ref{prop.exp-lw}, we can now show the notion of via $W$-bisimulation that $\mathcal{L}(W)$ cannot distinguish between $(\M,s)$ and $(\M',s')$, as follows. Define $Z=\{(s,s')\}$. We need only to show that (W-Coh) holds. For this, suppose that $(X,X')$ is $Z$-coherent, to show that $(\ast)$: $[X\in N(s)\text{ and }s\notin X]\iff [X'\in N'(s')\text{ and }s'\notin X'].$ The $Z$-coherent pairs are $(\emptyset,\emptyset)$, $(\{t\},\emptyset)$, $(\{s\},\{s'\})$ and $(\{s,t\},\{s'\})$. All these pairs satisfies $(\ast)$, since any of them does not satisfy  either the first conjuncts of the equivalents, or the second conjuncts. Therefore, we get $(\ast)$, as desired.

\begin{theorem}[Hennessy-Milner Theorem for $\bis_W$]\label{Thm.hm-bis-W}
Let $\M$ and $\M'$ be both finite neighborhood models, $s\in\M$, $s'\in\M'$. If for every $\phi\in\mathcal{L}(W)$, $\M,s\vDash\phi$ iff $\M',s'\vDash\phi$, then $(\M,s)\bis_W(\M',s')$.
\end{theorem}

\begin{proof}
Suppose for every $\phi\in\mathcal{L}(W)$, $\M,s\vDash\phi$ iff $\M',s'\vDash\phi$, with $\M$ and $\M'$ being both finite. Then $w(\M)$ and $w(\M')$ are finite $w$-models. By supposition and Prop.~\ref{Prop.w-M-model}, we obtain that, for every $\phi\in\mathcal{L}(W)$, $w(\M),s\Vdash\phi$ iff $w(\M'),s'\Vdash\phi$. By Hennessy-Milner Theorem for $\bis_{w}$ (Thm.~\ref{thm.hm-bis-w}), it follows that $(w(\M),s)\bis_{w}(w(\M'),s')$. Now applying Prop.~\ref{prop.w-bis-to-W-bis-stronger}, we conclude that $(\M,s)\bis_W(\M',s')$.
\end{proof}

Recall that in~\cite{Fan:2018}, the Hennessy-Milner Theorem for $c$-bisimulation obtains from that for nbh-$\Delta$-bisimulation and the fact that the notion of $c$-bisimulation is equivalent to that of nbh-$\Delta$ bisimulation in~\cite{Bakhtiarietal:2017}. In retrospect, the Hennessy-Milner Theoerem for $c$-bisimulation can also shown directly from~\cite[Corollary~4.7]{Hansenetal:2009} (similar to the proof of Thm.~\ref{thm.hm-bis-w}), and then the Hennessy-Milner Theorem for nbh-$\Delta$-bisimulation follows in a similar way to the proof of Thm.~\ref{Thm.hm-bis-W}.

The models $\M^{u^+}$ and $\M^{u^-}$ defined in~\cite[p.~262]{GilbertVenturi:2017} are, respectively, the special cases of those defined in the following proposition, when $\Sigma=U_w$. Therefore, the following proposition extends the result in~\cite[Thm.~2.8]{GilbertVenturi:2017}.

\begin{proposition}\label{prop.equivalence-W}
Let $\M=\lr{S,N,V}$. For all $w\in S$ and $\alpha\in\mathcal{L}(W)$, we have
$$\M,w\vDash \alpha\text{ iff }\M^{u^+},w\vDash\alpha$$
and
$$\M,w\vDash \alpha\text{ iff }\M^{u^-},w\vDash\alpha,$$
where $\M^{u^+}=\lr{S,N^{u^+},V}$ and $\M^{u^+}=\lr{S,N^{u^-},V}$, where $N^{u^+}(w)=N(w)\cup \Sigma_w$ and $N^{u^-}(w)=N(w)\backslash \Sigma_w$ for $\Sigma_w\subseteq U_w=\{X\subseteq S\mid w\in X\}$.
\end{proposition}

\begin{proof}
By Prop.~\ref{prop.inv-bis-W}, it suffices to show that $(\M,w)\bis_W(\M^{u^+},w)$ and $(\M,w)\bis_W(\M^{u^-},w)$. For this, define $Z=\{(w,w)\mid w\in S\}$. Let $wZw$. The condition (Var) is clear.

For (W-Coh), suppose that $(X,X')$ is $Z$-coherent. Then it must be the case that $X=X'$: otherwise, there would be an $w\in S$ such that $w\in X$ but $w\notin X'$, which contradicts the fact that $wZw$ and the supposition. We only need to show that
$$[X\in N(w)\text{ and }w\notin X]\iff [X\in N^{u^+}(w)\text{ and }w\notin X]~~~~~~~~~~~~~~~~~(1)$$
and
$$[X\in N(w)\text{ and }w\notin X]\iff [X\in N^{u^-}(w)\text{ and }w\notin X]~~~~~~~~~~~~~~~~~(2).$$

The ``$\Longrightarrow$'' of (1) and ``$\Longleftarrow$'' of (2) are straightforward since $N^{u^-}(w)\subseteq N(w)\subseteq N^{u^+}(w)$.

Moreover, if $w\notin X$, then $X\notin U_w$, thus $X\notin \Sigma_w$. This gives us  ``$\Longleftarrow$'' of (1) and ``$\Longrightarrow$'' of (2).
\end{proof}

Similar to the case in Prop.~\ref{prop.equivalence-circ}, here $\Sigma_w$ is defined in terms of $w$, thus given any two points $x,y\in S$, $\Sigma_x$ may be different from $\Sigma_y$.

We have immediately the following, which extends the result in~\cite[Coro.~2.9]{GilbertVenturi:2017}.
\begin{corollary}\label{coro.frameequivalent-W}
Let $\mathcal{F}=\lr{S,N}$, and $\mathcal{F}^{u^+}=\lr{S,N^{u^+}}$ and $\mathcal{F}^{u^-}=\lr{S,N^{u^-}}$ be defined as in Prop.~\ref{prop.equivalence-W}. Then for all $\phi\in\mathcal{L}(\circ)$, we have
$$\mathcal{F}\vDash\phi\text{ iff }\mathcal{F}^{u^+}\vDash\phi$$
and
$$\mathcal{F}\vDash\phi\text{ iff }\mathcal{F}^{u^-}\vDash\phi.$$
\end{corollary}

\begin{proposition}
The frame properties $(m)$, $(n)$, $(w)$ are undefinable in $\mathcal{L}(W)$.
\end{proposition}

\begin{proof}
Consider the following frames:
\[
\xymatrix{\mathcal{F}&s\ar[r]&\emptyset&&\mathcal{F}'&s\ar[r]&\{s\}\\}
\]

One may check that $\mathcal{F}$ has $(w)$, but $\mathcal{F}'$ fails, since $\{s\}\in N'(s)$ but $s\in \{s\}$; $\mathcal{F}'$ has $(m)$ and $(n)$, but $\mathcal{F}$ does not, since $\emptyset\in N(s)$ but $\{s\}\notin N(s)$.

Moreover, $\mathcal{F}'=\mathcal{F}^{u^+}$ where $\Sigma_s=\{s\}$. Then by Coro.~\ref{coro.frameequivalent-W}, we conclude that for all $\phi\in\mathcal{L}(W)$, $\mathcal{F}\vDash\phi\iff \mathcal{F}'\vDash\phi.$
\end{proof}

\begin{proposition}
The frame properties $(c)$ and $(r)$ are undefinable in $\mathcal{L}(W)$.
\end{proposition}

\begin{proof}
Consider the following frames:
$$
\xymatrix{&&&\{t\}&&&\{s\}&&\{t\}\\
\mathcal{F}&s\ar[r]\ar[urr]&\{s,t\}&t\ar[l]\ar[u]&&\mathcal{F}'&s\ar[r]\ar[u]\ar[urr]&\{s,t\}&t\ar[l]\ar[u]}
$$

One may check that $\mathcal{F}$ has $(c)$ and $(r)$, but $\mathcal{F}'$ fails, since $\{s\}\in N'(s)$ and $\{t\}\in N'(s)$ but $\emptyset\notin N'(s)$.

Moreover, $\mathcal{F}'=\mathcal{F}^{u^+}$ where $\Sigma_s=\{\{s\}\}$ and $\Sigma_t=\{s,t\}$. Then by Coro.~\ref{coro.frameequivalent-W}, we conclude that for all $\phi\in\mathcal{L}(W)$, $\mathcal{F}\vDash\phi\iff \mathcal{F}'\vDash\phi.$
\end{proof}

\weg{\begin{proposition}
On the class of all models, the $(m)$-models, the $(c)$-models, the $(r)$-models, $\mathcal{L}(W)$ is not at least as expressive as $\mathcal{L}(\circ)$.
\end{proposition}

\begin{proof}
Consider the following models:
$$
\xymatrix{\M&s:p&&\M'&s:p\ar[r]&\{s\}}
$$

One can check that $\M$ and $\M'$ possess the properties $(m)$ and $(c)$. And also, $\M'=\M^{u^+}$ where $\Sigma=\{\{s\}\}$, and thus by Prop.~\ref{prop.equivalence-W}, $(\M,s)$ and $(\M',s)$ cannot distinguished by $\mathcal{L}(W)$-formulas.

However, these two pointed models can be distinguished by an $\mathcal{L}(\circ)$-formula, since on the one hand, $\M',s\vDash\circ p$ but $\M,s\nvDash\circ p$: to see this, $s\vDash p$ but $p^{\M}=\{s\}\notin N(s)$.

For the case of $(r)$, just changing the above models to the following:
$$
\xymatrix{\M&s:p\ar[r]&\emptyset&&\M'&s:p\ar[r]&\{s\}}
$$
\end{proof}}

\begin{proposition}
On the class of all models, the $(m)$-models, the $(c)$-models,  $(n)$-models, the $(r)$-models, $\mathcal{L}(W)$ is not at least as expressive as $\mathcal{L}(\circ)$.
\end{proposition}

\begin{proof}
Consider the following models:
$$
\xymatrix{&&&&&&\{s\}&&\\
\M&s:p\ar[r]&\{s,t\}&t:\neg p\ar[l]&&\M'&s:p\ar[r]\ar[u]&\{s,t\}&t:\neg p\ar[l]}
$$

First, $\M$ and $\M'$ both have $(m)$, $(c)$, $(n)$ and $(r)$.

One the one hand, $\M'=\M^{u^+}$ where $\Sigma_s=\{\{s\}\}$ and $\Sigma_t=\{\{s,t\}\}$. By Prop.~\ref{prop.equivalence-W}, we infer that $(\M,s)$ and $(\M',s)$ cannot be distinguished by $\mathcal{L}(W)$-formulas.

On the other hand, these two pointed models can be distinguished by an $\mathcal{L}(\circ)$-formula, just noticing that $\M,s\nvDash\circ p$ (as $s\vDash p$ but $p^\M=\{s\}\notin N(s)$) and $\M',s\vDash\circ p$.
\end{proof}

We conclude this part with another application of the notion of $W$-bisimulation. For this, we define the notion of transitive closure of a neighborhood frame, which comes from~\cite[Def.~2.12]{GilbertVenturi:2017}.

\begin{definition}
Given a neighborhood frame $\mathcal{F}=\lr{S,N}$, we define its {\em transitive closure} $\mathcal{F}^{tc}=\lr{S,N^{tc}}$ inductively as $\bigcup_{i\in\mathbb{N}}\mathcal{F}_i$, with $\mathcal{F}_0=\mathcal{F}$ and $\mathcal{F}_{i+1}=\lr{S,N_{i+1}}$, where 
$$N_{i+1}(w)=N_i(w)\cup\{m_{N_i}(X)\mid X\in N_i(w)\}$$
for every $w\in S$, and
$$m_{N_i}(X)=\{z\in W\mid X\in N_i(z)\}$$
for $X\subseteq S$.
\end{definition}

\begin{fact}\label{fact.literature}\cite[Fact~2.13]{GilbertVenturi:2017}
For all $w\in S$, if $X\in N^{tc}(w)\backslash N(w)$, then $w\in X$.
\end{fact}

The following proposition is shown in~\cite[Thm.~2.14]{GilbertVenturi:2017}, but without use of a bisimulation argument.
\begin{proposition}
Let $\M=\lr{S,N,V}$ be a model based on a frame $\mathcal{F}$ and $\M^{tc}$ the corresponding one based on $\mathcal{F}^{tc}$. For all $w\in S$ and $\phi\in\mathcal{L}(W)$, we have
$$\M,w\vDash\phi\text{ iff }\M^{tc},w\vDash\phi.$$
\end{proposition}

\begin{proof}
We show a stronger result:
$$(\M,w)\bis_W(\M^{tc},w),$$
which implies the statement due to Prop.~\ref{prop.inv-bis-W}. For this, define $Z=\{(x,x)\mid x\in S\}$. It suffices to show that $Z$ is a $W$-bisimulation between $\M$ and its transitive closure. Suppose that $xZx$. The condition (Var) is straightforward.

For (W-Coh), assume that $(X,X')$ is $Z$-coherent. Then $X=X'$: if not, then for some $y\in S$ (thus $yZy$) such that $y\in X$ but $y\notin X'$, contradicting the assumption. It then remains to show that
$$[X\in N(x)\text{ and }x\notin X]\iff [X\in N^{tc}(x)\text{ and }x\notin X].$$

The `$\Longrightarrow$' follows immediately since $N(x)\subseteq N^{tc}(x)$. For the other direction, if $X\in N^{tc}(x)\text{ and }x\notin X$, by Fact~\ref{fact.literature}, we obtain that $X\in N^{tc}(x)$ and $X\notin N^{tc}(x)\backslash N(x)$, and therefore $X\in N(x)$, as desired.
\end{proof}}

\weg{\subsection{Axiomatizations}

The authors of~\cite{GilbertVenturi:2017} suggest that it is hard to characterize these logics over specific classes of frames.

The classical modal logic for false belief, denoted ${\bf EW}$, is the extension of classical propositional calculus with the following axiom and primitive rule:
\[
\begin{array}{ll}
\texttt{W1}&W\phi\to \neg\phi\\
\texttt{REW}&\dfrac{\phi\lra\psi}{W\phi\lra W\psi}.\\
\end{array}
\]

$\texttt{W1}$ is called $\text{A1}$ in~\cite{Steinsvold:falsebelief} and $a1$ in~\cite{GilbertVenturi:2017}. Note that $\texttt{W1}$ is indispensable in ${\bf EW}$. To see this, consider an auxiliary semantics which interprets all formulas of the form $W\phi$ as $\phi$, then one may easily verify that the subsystem ${\bf EW}-\texttt{W1}$ is sound with respect to the auxiliary semantics, but $\texttt{W1}$ is unsound, and thus $\texttt{W1}$ cannot be derived from the remaining axioms and inference rules.

$\texttt{REW}$ did not arise in the literature. As we will see below, $\texttt{REW}$ is only needed in ...; whereas when it comes to monotonic cases, we need a stronger inference rule, which was called $\text{R1}$ in~\cite{Steinsvold:falsebelief} and $aN$ in~\cite{GilbertVenturi:2017}.

It may be worth remarking that, besides the replacement of equivalents for the sole modality, we have also an extra axiom in ${\bf EW}$. This is different from the case in classical modal logic, which has only the usual replacement of equivalents for the sole modality. Moreover, this is similar to the case in classical contingency logic~\cite{FanvD:neighborhood}, which has an extra axiom (called $\Delta\texttt{Equ}$ there). As shown in~\cite{Fan:2018}, the axiom $\Delta\texttt{Equ}$ plays a key role in providing a new perspective for neighborhood contingency logic. The axiom $\texttt{W1}$ is to neighborhood logic for false beliefs just as $\Delta\texttt{Equ}$ is to neighborhood contingency logic, just shown in the previous section.

The soundness of ${\bf EW}$ is straightforward from the semantics of $W$.

\begin{definition}
The triple $\M^c=\lr{S^c,N^c,V^c}$ is the canonical model for ..., if
\begin{itemize}
\item $S^c=\{s\mid s\text{ is a maximal consistent set}\}$,
\item $N^c(s)=\{|\phi|\mid W\phi\in s\}$,
\item $V^c(p)=\{s\in S^c\mid p\in s\}$.
\end{itemize}
\end{definition}

\begin{lemma}
For all $\phi\in\mathcal{L}(W)$, for all $s\in S^c$, we have $\M^c,s\vDash \phi\iff \phi\in s$, that is, $\phi^{\M^c}=|\phi|$.
\end{lemma}

\begin{proof}
The proof goes on induction on $\phi$, where the nontrivial case is $W\phi$.

Suppose that $W\phi\in s$, to show $\M^c,s\vDash W\phi$. By supposition and axiom ..., we have $\neg\phi\in s$, thus $\phi\notin s$. By IH, this means that $\M^c,s\nvDash\phi$. Using the supposition again, we obtain $|\phi|\in N^c(s)$, by IH, $\phi^{\M^c}\in N^c(s)$. Therefore, $\M^c,s\vDash W\phi$.

Conversely, assume that $W\phi\notin s$, to prove that $\M^c,s\nvDash W\phi$. By supposition, we get $|\phi|\notin N^c(s)$. By IH, $\phi^{\M^c}\notin N^c(s)$, and therefore $\M^c,s\nvDash W\phi$.
\end{proof}

\begin{theorem}
${\bf EW}$ is complete with respect to the class of all neighborhood frames.
\end{theorem}

$W\phi\land W\psi\to W(\phi\land\psi)$

\begin{proposition}
${\bf ECW}$ is strongly complete with respect to the class of $(c)$-frames.
\end{proposition}

\begin{proof}
By ..., it is sufficient to prove that $N^c$ has the property $(c)$. Suppose that $X\in N^c(s)$ and $Y\in N^c(s)$, then there are $\phi,\psi$ such that $X=|\phi|$ and $Y=|\psi|$. From $|\phi|\in N^c(s)$ and $|\psi|\in N^c(s)$, it follows that $W\phi\in s$ and $W\psi\in s$. By axiom ..., we obtain $W(\phi\land\psi)\in s$, thus $|\phi\land\psi|\in N^c(s)$, namely $X\cap Y\in N^c(s)$.
\end{proof}

For the system which is determined by the class of $(n)$-frames, as $W\top\notin s$ (by axiom $\texttt{W1}$), $S^c\notin s$, and thus $N^c(s)$ does not contain the unit. Now let $N^S(s)=N^c(s)\cup \{S^c\}$.

\cite[Thm.~2.2, Coro.~2.7]{GilbertVenturi:2017} shows that ${\bf EMCW}={\bf ECW}+\text{aN}$ (denoted ${\bf A_K}$ there) is sound and complete with respect to the class of all neighborhood frames that are closed under binary intersections and are negatively supplemented, where a neighborhood frame $\mathcal{F}=\lr{S,N}$ is said to be {\em negatively supplemented} if for all $s\in S$ and $X,Y\subseteq S$, if $X\in N(s)$, $X\subseteq Y$ and $s\notin Y$, then $Y\in N(s)$. It may be worth noting that the subsystem of ${\bf A_K}$ without $\text{a2}$ is sound and complete with respect to the class of all neighborhood frames that are negatively supplemented (an alternative proof is as follows). Also note that the notion of negative supplementation is weaker than that of supplementation.\footnote{In this sense, we think that the term `weakly supplemented' is better than `negatively supplemented'.} Thus it is quite natural to ask which logic is determined by the class of neighborhood frames that are supplemented. As we will see, the same subsystem does this job.

$W\phi\land \neg(\phi\vee\psi)\to W(\phi\vee\psi)$. This is equivalent to the rule $\dfrac{\phi\to\psi}{W\phi\land\neg\psi\to W\psi}$,  if we have also the replacement of equivalents for $W$ (namely, $\texttt{REW}$).

\begin{definition}
The triple $\M^c=\lr{S^c,N^c,V^c}$ is the canonical model for ..., if
\begin{itemize}
\item $S^c=\{s\mid s\text{ is a maximal consistent set}\}$,
\item $|\phi|\in N^c(s)$ iff $W(\phi\vee\psi)\vee(\phi\vee\psi)\in s$ for all $\psi$,
\item $V^c(p)=\{s\in S^c\mid p\in s\}$.
\end{itemize}
\end{definition}

\begin{lemma}
For all $\phi\in\mathcal{L}(W)$, for all $s\in S^c$, we have $\M^c,s\vDash \phi\iff \phi\in s$, that is, $\phi^{\M^c}=|\phi|$.
\end{lemma}

\begin{proof}
For the case $W\phi$, suppose that $W\phi\in s$, to show that $\M^c,s\vDash W\phi$. By supposition, we derive that $\neg\phi\in s$, viz., $\phi\notin s$, then by IH, we obtain $\M^c,s\nvDash \phi$. It suffices to show that $\phi^{\M^c}\in N^c(s)$, which is equivalent to showing that $|\phi|\in N^c(s)$ by IH. If not, then $W(\phi\vee\psi)\vee(\phi\vee\psi)\notin s$ for some $\psi$. Then from $\neg(\phi\vee\psi)\in s$ and supposition and axiom ..., it follows that $W(\phi\vee\psi)\in s$: a contradiction.

Conversely, suppose that $\M^c,s\vDash W\phi$, to prove that $W\phi\in s$. By supposition and IH, $|\phi|\in N^c(s)$ and $\phi\notin s$. Then $W(\phi\vee\psi)\vee(\phi\vee\psi)\in s$ for all $\psi$; in particular, $W\phi\vee\phi\in s$. Then $W\phi\in s$.
\end{proof}

\weg{\begin{lemma}
For all $\phi\in\mathcal{L}(W)$, for all $s\in S^c$, we have $\M^c,s\vDash \phi\iff \phi\in s$, that is, $\phi^{\M^c}=|\phi|$.
\end{lemma}

\begin{proof}
For the case $W\phi$, suppose that $W\phi\in s$, to show that $\M^c,s\vDash W\phi$. By supposition, we derive that $\neg\phi\in s$, viz., $\phi\notin s$, then by IH, we obtain $\M^c,s\nvDash \phi$. It suffices to show that $\phi^{\M^c}\in N^c(s)$, which is equivalent to showing that $|\phi|\in N^c(s)$ by IH. For this, assume for an arbitrary $\psi$ that $W\psi\in s$, then by ..., we have $W(\phi\land\psi)\in s$.

Conversely, suppose that $\M^c,s\vDash W\phi$, to prove that $W\phi\in s$. By supposition and IH, $|\phi|\in N^c(s)$ and $\phi\notin s$.
\end{proof}}

\begin{lemma}
For every $s\in S^c$,
$$|\phi|\in (N^c)^+(s)\iff W(\phi\vee\psi)\vee(\phi\vee\psi)\in s\text{ for all }\psi.$$
\end{lemma}

\begin{proof}
`$\Longleftarrow$': This follows immediately from the fact that $N^c(s)\subseteq (N^c)^+(s)$.

`$\Longrightarrow$': Suppose that $|\phi|\in (N^c)^+(s)$, then there exists $X\in N^c(s)$ such that $X\subseteq |\phi|$. Since $X\in N^c(s)$, there must be a $\chi$ such that $X=|\chi|$. By $|\chi|\in N^c(s)$, we have $W(\chi\vee\psi)\vee(\chi\vee\psi)\in s\text{ for all }\psi$; in particular, $W(\chi\vee\phi\vee\psi)\vee(\chi\vee\phi\vee\psi)\in s$. From $|\chi|\subseteq |\phi|$ it follows that $\vdash\chi\to\phi$, then $\vdash \chi\vee\phi\vee\psi\lra \phi\vee\psi$, thus $\vdash W(\chi\vee\phi\vee\psi)\lra W(\phi\vee\psi)$, which implies that $W(\phi\vee\psi)\vee (\phi\vee\psi)\in s$. Since $\psi$ is arbitrary, we conclude that $W(\phi\vee\psi)\vee(\phi\vee\psi)\in s\text{ for all }\psi$, as desired.
\end{proof}

\begin{theorem}
${\bf EMW}$ is sound and strongly complete with respect to the class of neighborhood frames that are supplemented.
\end{theorem}

\begin{theorem}
${\bf EMW}$ is sound and strongly complete with respect to the class of neighborhood frames that are supplemented and contain the unit.
\end{theorem}

\begin{proof}
Note that $S^c\in N^c(s)$ and thus $S^c\in (N^c)^+(s)$.
\end{proof}

\begin{corollary}
${\bf EMW}$ is sound and strongly complete with respect to the class of neighborhood frames that are negatively supplemented.
\end{corollary}

\begin{corollary}
${\bf EMCW}$ is sound and strongly complete with respect to the class of neighborhood frames that are closed under binary intersections and are supplemented.
\end{corollary}

\begin{proof}
Follows from the fact that the supplementation is closed under binary intersections.
\end{proof}

\begin{corollary}\cite[Thm.~2.2, Coro.~2.7]{GilbertVenturi:2017}
${\bf EMCW}$ is sound and strongly complete with respect to the class of neighborhood frames that are closed under binary intersections and are negatively supplemented.
\end{corollary}

\begin{theorem}
${\bf EMCW}$ is sound and strongly complete with respect to the class of filters.
\end{theorem}}

\weg{\section{A new perspective}

\subsection{Logics of unknown truths}


$$(t')~~~X\notin N(s)\text{ implies }s\in X$$
Equivalently, $s\notin X$ implies $X\in N(s)$.

\[\begin{array}{lll}
\M,s\Vdash\circ\phi&\iff&\phi^\M\in N(s)
\end{array}\]
Where $\phi^\M=\{s\in S\mid \M,s\Vdash\phi\}$.

Therefore, $\M,s\Vdash\bullet\phi$ iff $\phi^\M\notin N(s)$.

\begin{theorem}
The two semantics characterize the same logic (i.e. validities). That is, for every $\phi\in\mathcal{L}_\circ$, we have $\Vdash_{t'}\phi\iff \vDash\phi$.
\end{theorem}

\begin{lemma}
Let $\M=\lr{S,N,V}$ be $t'$-models. Then for all $\phi\in\mathcal{L}_\circ$, for all $s\in S$, we have
$$\M,s\Vdash\phi\iff \M,s\vDash\phi.$$
\end{lemma}

\begin{proof}
By induction on $\phi$. It suffices to consider the case $\circ\phi$.

Suppose that $\M,s\Vdash\circ\phi$, then $\phi^\M\in N(s)$. By induction hypothesis, $\phi^{\M_\vDash}\in N(s)$. Then obviously $s\in \phi^{\M_\vDash}$ implies $\phi^{\M_\vDash}\in N(s)$. Therefore, $\M,s\vDash\circ\phi$.

Conversely, assume that $\M,s\vDash\circ\phi$. This is just to say that $s\in \phi^{\M_\vDash}$ implies $\phi^{\M_\vDash}\in N(s)$. By induction hypothesis, $s\in \phi^{\M}$ implies $\phi^{\M}\in N(s)$. Moreover, $\M$ is a $t'$-model. It follows that $s\notin \phi^{\M}$ implies $\phi^{\M}\in N(s)$. In so doing, we obtain $\phi^\M\in N(s)$ i.e., $\M,s\Vdash\circ\phi$.
\end{proof}

\begin{definition}
Let $\M=\lr{S,N,V}$ be a neighborhood model. Its $t'$-variation $t'(\M)=\lr{S,t'N,V}$ is such that $t'N(s)=\{X\subseteq S\mid s\notin X\text{ or }X\in N(s)\}$.
\end{definition}

It should be clear that every model has a $t'$-variation, and each $t'$-variation is a $t'$-model. Moreover, if $\M$ is already a $t'$-model, then $t'(\M)=\M$.

\begin{proposition}
If $\M$ is already a $t'$-model, then $t'(\M)=\M$.
\end{proposition}

\begin{proof}
We need only show that $t'N=N$, that is to show for any $s$ in the domain, $t'N(s)=N(s)$.

Suppose $X\in N(s)$, it follows immediately that $X\in t'N(s)$ by the definition of $t'N$. Conversely, suppose $X\in t'N(s)$, then $s\notin X$ or $X\in N(s)$. Since $\M$ is a $t'$-model, it holds that $s\notin X$ implies $X\in N(s)$. Therefore, $X\in N(s)$, as desired.
\end{proof}

\begin{lemma}
Let $\M=\lr{S,N,V}$ be a neighborhood model. Then for all $\phi\in\mathcal{L}_\circ$, for all $s\in S$, we have
$$\M,s\vDash\phi\iff t'(\M),s\Vdash\phi.$$
\end{lemma}

\begin{proof}
By induction on $\phi$. It suffices to consider the case $\circ\phi$.

Suppose that $\M,s\vDash\circ\phi$. Then $s\notin \phi^{\M_\vDash}$ or $\phi^{\M_\vDash}\in N(s)$. By the definition of $t'N$, $\phi^{\M_\vDash}\in t'N(s)$. By induction hypothesis, we have $\phi^{t'(\M)}\in t'N(s)$, i.e. $t'(\M),s\Vdash\circ\phi$.

Conversely, assume that $t'(\M),s\Vdash\circ\phi$. Then $\phi^{t'(\M)}\in t'N(s)$. By induction hypothesis, $\phi^{\M_\vDash}\in t'N(s)$. Using the definition of $t'N$ again, we infer that $s\in \phi^{\M_\vDash}$ implies $\phi^{\M_\vDash}\in N(s)$. Therefore, $\M,s\vDash\circ\phi$.
\end{proof}

\begin{definition}[$t'$-bisimulation]
Let $\M=\lr{S,N,V}$ and $\M'=\lr{S',N',V'}$ be $t'$-models. A nonempty relation $Z$ is said to be an {\em $t'$-bisimulation} between $\M$ and $\M'$, if $sZs'$ implies
\begin{enumerate}
\item[(Var)] $s\in V(p)$ iff $s'\in V'(p)$ for all $p\in \BP$.
\item[(t'-Coh)] if $(X,X')$ is $Z$-coherent, then
$$X\in N(s)\iff X'\in N'(s').$$
\end{enumerate}
\end{definition}

\begin{theorem}
Every nbh-$\circ$-bisimulation (between neighborhood models) is a $t'$-bisimulation (between $t'$-models), and every $t'$-bisimulation is also an nbh-$\circ$-bisimulation. Therefore, the notions of nbh-$\circ$-bisimulation and $t'$-bisimulation are essentially the same.
\end{theorem}

\begin{proof}
First, suppose that $Z$ is an nbh-$\circ$-bisimulation between neighborhood models $\M$ and $\M'$, we show that $Z$ is a $t'$-bisimulation between $t'(\M)$ and $t'(\M')$. For this, it suffices to show (t'-Coh). Thus we suppose $(X,X')$ is $Z$-coherent.
\[
\begin{array}{lll}
&X\in t'N(s)&\\
\iff & s\notin X\text{ or }X\in N(s)&\text{By Def. }t'N\\
\iff & s'\notin X\text{ or }X'\in N'(s')&\text{By }(Coh)\\
\iff & X'\in t'N'(s')&\text{By Def. }t'N\\
\end{array}
\]

Next, assume that $Z$ is a $t'$-bisimulation between $t'$-models $\M$ and $\M'$, we prove that $Z$ is also an nbh-$\circ$-bisimulation between the two models. For this, the reminder is to show (Coh). Thus we suppose $(X,X')$ is $Z$-coherent.

Assume that $s\notin X$ or $X\in N(s)$. Since $\M$ satisfies $t'$, $s\notin X$ implies $X\in N(s)$. Thus $X\in N(s)$. By (t'-Coh), $X'\in N'(s')$. Therefore, $s'\notin X'$ or $X'\in N'(s')$. The converse is similar.
\end{proof}

As a matter of fact, we can show a stronger result.
\begin{proposition}\label{prop.t-bis-to-circ-bis-stronger}
For any neighborhood models $\M$ and $\M'$, and $s\in \M$, $s'\in\M'$, if $(t(\M),s)\bis_{t'}(t'(\M'),s')$, then $(\M,s)\bis_\circ(\M',s')$.
\end{proposition}

\begin{proof}
Suppose that $(t(\M),s)\bis_{t'}(t'(\M'),s')$, for showing $(\M,s)\bis_\circ(\M',s')$, it suffices to show (Coh). Whenever $(X,X')$ is $Z$-coherent, we have
\[
\begin{array}{lll}
&s\notin X\text{ or }X\in N(s)&\\
\iff& X\in t'N(s)&\text{by definition of }t'N\\
\iff& X'\in t'N'(s')&\text{by t'-Coh}\\
\iff& s'\notin X'\text{ or }X'\in N'(s')&\text{by definition of }t'N'\\
\end{array}
\]
\end{proof}

One may see that when $\M$ and $\M'$ are both $t'$-models, we obtain that every $t'$-bisimulation between $t'$-models is also an nbh-$\circ$-bisimulation between them.

\begin{proposition}[Invariance under $\bis_{t'}$]
Let $\M$ and $\M'$ be both $t'$-models, $s\in\M$, $s'\in\M'$. If $(\M,s)\bis_{t'}(\M',s')$, then for every $\phi\in\mathcal{L}_\circ$, $\M,s\Vdash\phi$ iff $\M',s'\Vdash\phi$.
\end{proposition}

\begin{proof}

\end{proof}

\begin{theorem}[Hennessy-Milner Theorem for $\bis_{t'}$]
Let $\M$ and $\M'$ be both finite $t'$-models, $s\in\M$, $s'\in\M'$. If for every $\phi\in\mathcal{L}_\circ$, $\M,s\Vdash\phi$ iff $\M',s'\Vdash\phi$, then $(\M,s)\bis_{t'}(\M',s')$.
\end{theorem}

\begin{proof}

\end{proof}

\subsection{Logics of false beliefs}

Consider the following neighborhood property:
$$(w)~~~~~~\text{For all }\{s\}\cup X\subseteq S,\text{ if }X\in N(s),\text{ then }s\notin X.$$

$\M,s\Vdash W\phi$ iff $\phi^{\M_\Vdash}\in N(s)$.

\begin{proposition}
Under the new semantics, $Wp\to\neg p$ defines the property $(w)$.
\end{proposition}

\begin{proof}
Let $\mathcal{F}=\lr{S,N}$ be a neighborhood frame.

Suppose that $\mathcal{F}$ has $(w)$, we need to show that $\mathcal{F}\Vdash Wp\to\neg p$. For this, we assume for any neighborhood model $\M=\lr{\mathcal{F},V}$ and any state $s\in S$ that $\M,s\Vdash Wp$, then $p^{\M_\Vdash}\in N(s)$. By supposition, we infer that $s\notin p^{\M_\Vdash}$, that is, $\M,s\nVdash p$, and thus $\M,s\Vdash\neg p$. Therefore, $\mathcal{F}\Vdash Wp\to\neg p$.

Conversely, suppose that $\mathcal{F}$ does not possess $(w)$. Then there exists $s\in S$ and $X\subseteq S$ such that $X\in N(s)$ but $s\in X$. Define a valuation $V$ on $\mathcal{F}$ such that $V(p)=X$. Then $p^{\M_\Vdash}=V(p)\in N(s)$ and $s\in V(p)=p^{\M_\Vdash}$, which implies that $\M,s\Vdash Wp$ and $\M,s\nVdash \neg p$. Thus $\M,s\nVdash Wp\to\neg p$, and therefore $\mathcal{F}\nVdash Wp\to\neg p$.
\end{proof}

\begin{proposition}\label{prop.w-model}
Let $\M=\lr{S,N,V}$ be $w$-models. Then for all $\phi\in\mathcal{L}(W)$, for all $s\in S$, we have
$\M,s\Vdash\phi\iff \M,s\vDash\phi.$ That is, $\phi^{\M_{\Vdash}}=\phi^\M$.
\end{proposition}

\begin{proof}
By induction on $\phi$. The nontrivial case is $W\phi$. Suppose that $\M,s\Vdash W\phi$, then $\phi^{\M_\Vdash}\in N(s)$. By induction hypothesis, $\phi^\M\in N(s)$. As $\M$ is a $w$-model, we have that $s\notin \phi^\M$, viz. $\M,s\nvDash\phi$. Therefore, $\M,s\vDash W\phi$.

Conversely, assume that $\M,s\vDash W\phi$. Then $\phi^\M\in N(s)$ and $\M,s\nvDash\phi$. This implies that $\phi^\M\in N(s)$. By induction hypothesis, $\phi^{\M_\Vdash}\in N(s)$, and therefore $\M,s\Vdash W\phi$.
\end{proof}

\begin{definition}
Let $\M=\lr{S,N,V}$ be a neighborhood model. Its $w$-variation $w(\M)=\lr{S,wN,V}$ is such that $wN(s)=\{X\subseteq S\mid X\in N(s)\text{ and }s\notin X\}$.
\end{definition}

The definition of $wN$ is quite natural, since just as ``$X\in N(s)$ and $s\notin X$'' corresponds to the $\vDash$-semantics of $W$, $X\in wN(s)$ corresponds to the $\Vdash$-semantics of $W$ (letting $X=\phi^\M$).  It should be clear that every model has a sole $w$-variation, and each $w$-variation is a $w$-model. Moreover, if $\M$ is already a $w$-model, then $w(\M)=\M$.

\begin{proposition}\label{prop.ww}
If $\M$ is already a $w$-model, then $w(\M)=\M$.
\end{proposition}

\begin{proof}
We need only show that $wN=N$, that is to show for any $s$ in the domain, $wN(s)=N(s)$.

Suppose $X\in wN(s)$, it follows immediately that $X\in N(s)$ by the definition of $wN$. Conversely, suppose $X\in N(s)$, to show that $X\in wN(s)$. Since $\M$ is a $w$-model, from the supposition it follows that $s\notin X$. Therefore, $X\in wN(s)$, as desired.
\end{proof}

\begin{proposition}\label{Prop.w-M-model}
Let $\M=\lr{S,N,V}$ be a neighborhood model. Then for all $\phi\in \mathcal{L}(W)$, for all $s\in \M$, we have that $\M,s\vDash \phi$ iff $w(\M),s\Vdash\phi$, that is, $\phi^\M=\phi^{w(\M)_\Vdash}$.
\end{proposition}

\begin{proof}
By induction on $\phi$. It suffices to show the case $W\phi$. For this case, we have the following equivalences:
\[
\begin{array}{ll}
&\M,s\vDash W\phi\\
\iff &\phi^\M\in N(s)\text{ and }\M,s\nvDash\phi\\
\iff &\phi^\M\in N(s)\text{ and }s\notin \phi^\M\\
\stackrel{\text{Def.~}wN}\iff &\phi^\M\in wN(s)\\
\stackrel{\text{IH}}\iff &\phi^{w(\M)_\Vdash}\in wN(s)\\
\iff &w(\M),s\Vdash W\phi.
\end{array}
\]
\end{proof}

\begin{definition}[$w$-Bisimulation] Let $\M=\lr{S,N,V}$ and $\M'=\lr{S',N',V'}$ be $w$-models. A nonempty relation $Z\subseteq S\times S'$ is a {\em $w$-bisimulation} between $\M$ and $\M'$, if $sZs'$ implies the following:
\begin{itemize}
\item[(Var)] For all $p\in\BP$, $s\in V(p)$ iff $s'\in V'(p)$;
\item[(w-Coh)] if the pair $(U,U')$ are $Z$-coherent, then $U\in N(s)$ iff $U'\in N'(s')$.
\end{itemize}
We say that $(\M,s)$ and $(\M',s')$ are {\em $w$-bisimilar}, notation: $(\M,s)\bis_w(\M',s')$, if there is a $w$-bisimulation between $\M$ and $\M'$ linking $s$ and $s'$.
\end{definition}

The next two propositions states that every $W$-bisimulation (between neighborhood models) is a $w$-bisimulation (between $w$-models), and every $w$-bisimulation is also a $W$-bisimulation. Therefore, the notions of $W$-bisimulation and $w$-bisimulation are essentially the same.

\begin{proposition}\label{prop.W-bisimulation-w-bisimulation}
Let $\M=\lr{S,N,V}$ and $\M'=\lr{S',N',V'}$ be neighborhood models. If $Z$ is a $W$-bisimulation between $\M$ and $\M'$, then $Z$ is also a $w$-bisimulation between $w(\M)$ and $w(\M')$.
\end{proposition}

\begin{proof}
Suppose that $Z$ is a $W$-bisimulation between neighborhood models $\M$ and $\M'$, we show that $Z$ is a $w$-bisimulation between $w(\M)$ and $w(\M')$.

First, one may easily verify that $w(\M)$ and $w(\M')$ are both $w$-models.

Second, hypothesize that $sZs'$, it suffices to show (w-Coh). Thus we assume $(X,X')$ is $Z$-coherent. We have the following equivalences:
\[
\begin{array}{lll}
&X\in wN(s)&\\
\iff & X\in N(s)\text{ and }s\notin X&\text{By Def. }wN\\
\iff & X'\in N'(s')\text{ and }s'\notin X'&\text{By (W-Coh)}\\
\iff & X'\in wN'(s').&\text{By Def. }wN'
\end{array}
\]
\end{proof}

It is natural to ask whether the converse of the above proposition holds. This is indeed the case.
\begin{proposition}\label{prop.w-bis-to-W-bis-stronger}
Let $\M=\lr{S,N,V}$ and $\M'=\lr{S',N',V'}$ be neighborhood models. If $Z$ is a $w$-bisimulation between $w(\M)$ and $w(\M')$, then $Z$ is also a $W$-bisimulation between $\M$ and $\M'$.
\end{proposition}

\begin{proof}
Suppose that  $Z$ is a $w$-bisimulation between $w(\M)$ and $w(\M')$, for showing $Z$ is also a $W$-bisimulation between $(\M,s)$ and $(\M',s')$, it suffices to show that given $sZs'$, it holds that (W-Coh). For this, assuming that $(X,X')$ is $Z$-coherent, we have
\[
\begin{array}{lll}
&X\in N(s)\text{ and }s\notin X&\\
\iff& X\in wN(s)&\text{by definition of }wN\\
\iff& X'\in wN'(s')&\text{by w-Coh}\\
\iff& X'\in N'(s')\text{ and }s'\notin X'&\text{by definition of }wN'\\
\end{array}
\]
\end{proof}

The following result is an immediate consequence of Prop.~\ref{prop.w-bis-to-W-bis-stronger} due to the fact that the $w$-variation of a $w$-model is the model itself (Prop.~\ref{prop.ww}).

\begin{corollary}\label{prop.w-bisimulation-W-bisimulation}
Let $\M=\lr{S,N,V}$ and $\M'=\lr{S',N',V'}$ be $w$-models. If $Z$ is a $w$-bisimulation between $\M$ and $\M'$, then $Z$ is a $W$-bisimulation between $\M$ and $\M'$.
\end{corollary}

\weg{
\begin{proof}
Assume that $Z$ is a $w$-bisimulation between $w$-models $\M$ and $\M'$, we prove that $Z$ is also a $W$-bisimulation between the two models. For this, hypothesize that $sZs'$, the reminder is to show (W-Coh). Thus we suppose that $(X,X')$ is $Z$-coherent.

Assume that $X\in N(s)\text{ and }s\notin X$. From $X\in N(s)$ and (w-Coh), it follows that $X'\in N'(s')$. Since $s\notin X$, by hypothesis and supposition, we have $s'\notin X'$. The other direction is analogous.
\end{proof}}

As the $w$-variation of a $w$-model is just the model itself (Prop.~\ref{prop.ww}), by Props.~\ref{prop.W-bisimulation-w-bisimulation} and~\ref{prop.w-bisimulation-W-bisimulation}, we have the following consequence.
\begin{corollary}
Let $\M$ and $\M'$ be both $w$-models. Then $Z$ is a $w$-bisimulation between $\M$ and $\M'$ iff $Z$ is a $W$-bisimulation between $\M$ and $\M'$.
\end{corollary}

\weg{As a matter of fact, we can show a stronger result.
\begin{proposition}\label{prop.t-bis-to-circ-bis-stronger}
For any neighborhood models $\M$ and $\M'$, and $s\in \M$, $s'\in\M'$, if $(t(\M),s)\bis_{t'}(t'(\M'),s')$, then $(\M,s)\bis_\circ(\M',s')$.
\end{proposition}

\begin{proof}
Suppose that $(t(\M),s)\bis_{t'}(t'(\M'),s')$, for showing $(\M,s)\bis_\circ(\M',s')$, it suffices to show (Coh). Whenever $(X,X')$ is $Z$-coherent, we have
\[
\begin{array}{lll}
&s\notin X\text{ or }X\in N(s)&\\
\iff& X\in t'N(s)&\text{by definition of }t'N\\
\iff& X'\in t'N'(s')&\text{by t'-Coh}\\
\iff& s'\notin X'\text{ or }X'\in N'(s')&\text{by definition of }t'N'\\
\end{array}
\]
\end{proof}

One may see that when $\M$ and $\M'$ are both $t'$-models, we obtain that every $t'$-bisimulation between $t'$-models is also an nbh-$\circ$-bisimulation between them.}

\begin{proposition}[Invariance under $\bis_w$]\label{prop.invariance-bis_w}
Let $\M$ and $\M'$ be both $w$-models, $s\in\M$, $s'\in\M'$. If $(\M,s)\bis_{w}(\M',s')$, then for every $\phi\in\mathcal{L}(W)$, $\M,s\Vdash\phi$ iff $\M',s'\Vdash\phi$.
\end{proposition}

\begin{proof}
By induction on $\phi$. We only consider the case $W\phi$. For this case, we have the following equivalences:
\[
\begin{array}{ll}
&\M,s\Vdash W\phi\\
\iff &\phi^\M\in N(s)\\
\stackrel{(\star)}\iff &\phi^{\M'}\in N'(s')\\
\iff &\M',s'\Vdash W\phi,
\end{array}
\]
where $(\star)$ holds since $(\phi^\M,\phi^{\M'})$ is $\bis_w$-coherent: for any $(x,x')\in \bis_w$, that is, $(\M,x)\bis_w(\M',x')$, by induction hypothesis, we have that $\M,x\Vdash \phi$ iff $\M',x'\Vdash\phi$, that is, $x\in \phi^\M$ iff $x'\in \phi^{\M'}$.
\end{proof}

\begin{theorem}[Hennessy-Milner Theorem for $\bis_{w}$]\label{thm.hm-bis-w}
Let $\M$ and $\M'$ be both finite $w$-models, $s\in\M$, $s'\in\M'$. If for every $\phi\in\mathcal{L}(W)$, $\M,s\Vdash\phi$ iff $\M',s'\Vdash\phi$, then $(\M,s)\bis_w(\M',s')$.
\end{theorem}

\begin{proof}
Note that over $w$-models, the $\Vdash$-semantics of $\mathcal{L}(W)$ is the same as that of $\mathcal{L}(\Box)$. Then the statement follows immediately from~\cite[Corollary~4.7]{Hansenetal:2009}, where the term `modally coherent' instead of `$Z$-coherent' was used.
\end{proof}

We use $\Gamma\Vdash_w\phi$ to denote that $\Gamma$ entails $\phi$ over the class of all $w$-models, that is, for every $w$-model $\M$ and every $s\in \M$, if $\M,s\Vdash\psi$ for every $\psi\in\Gamma$, then $\M,s\Vdash\phi$. The following is immediate by Props.~\ref{prop.w-model} and~\ref{Prop.w-M-model}.

\begin{corollary}
For all $\Gamma\cup\{\phi\}\subseteq \mathcal{L}(W)$, we have that $\Gamma\Vdash_w\phi\iff \Gamma\vDash\phi$. Therefore, for all $\phi\in\mathcal{L}(W)$, $\Vdash_{w}\phi\iff\vDash\phi$.
\end{corollary}

\begin{theorem}[Hennessy-Milner Theorem for $\bis_W$]\label{Thm.hm-bis-W}
Let $\M$ and $\M'$ be both finite neighborhood models, $s\in\M$, $s'\in\M'$. If for every $\phi\in\mathcal{L}(W)$, $\M,s\vDash\phi$ iff $\M',s'\vDash\phi$, then $(\M,s)\bis_W(\M',s')$.
\end{theorem}

\begin{proof}
Suppose for every $\phi\in\mathcal{L}(W)$, $\M,s\vDash\phi$ iff $\M',s'\vDash\phi$, with $\M$ and $\M'$ being both finite. Then $w(\M)$ and $w(\M')$ are finite $w$-models. By supposition and Prop.~\ref{Prop.w-M-model}, we obtain that, for every $\phi\in\mathcal{L}(W)$, $w(\M),s\Vdash\phi$ iff $w(\M'),s'\Vdash\phi$. By Hennessy-Milner Theorem for $\bis_{w}$ (Thm.~\ref{thm.hm-bis-w}), it follows that $(w(\M),s)\bis_{w}(w(\M'),s')$. Now applying Prop.~\ref{prop.w-bis-to-W-bis-stronger}, we conclude that $(\M,s)\bis_W(\M',s')$.
\end{proof}

Recall that in~\cite{Fan:2018}, the Hennessy-Milner Theorem for $c$-bisimulation obtains from that for nbh-$\Delta$-bisimulation and the fact that the notion of $c$-bisimulation is equivalent to that of nbh-$\Delta$ bisimulation in~\cite{Bakhtiarietal:2017}. In retrospect, the Hennessy-Milner Theoerem for $c$-bisimulation can also shown directly from~\cite[Corollary~4.7]{Hansenetal:2009} (similar to the proof of Thm.~\ref{thm.hm-bis-w}), and then the Hennessy-Milner Theorem for nbh-$\Delta$-bisimulation follows in a similar way to the proof of Thm.~\ref{Thm.hm-bis-W}.

\begin{theorem}[Hennessy-Milner Theorem for $\bis_\circ$]
Let $\M$ and $\M'$ be both finite neighborhood models, $s\in\M$, $s'\in\M'$. If for every $\phi\in\mathcal{L}_\circ$, $\M,s\vDash\phi$ iff $\M',s'\vDash\phi$, then $(\M,s)\bis_\circ(\M',s')$.
\end{theorem}

\begin{proof}
Suppose for every $\phi\in\mathcal{L}_\circ$, $\M,s\vDash\phi$ iff $\M',s'\vDash\phi$, with $\M$ and $\M'$ being both finite. Then $t'(\M)$ and $t'(\M')$ are finite $t$-models. By supposition and Prop.~, we obtain that, for every $\phi\in\mathcal{L}_\circ$, $t'(\M),s\vDash\phi$ iff $t'(\M'),s'\vDash\phi$. By Hennessy-Milner Theorem for $\bis_{t'}$, it follows that $(t'(\M),s)\bis_{t'}(t'(\M'),s')$. Now applying Prop.~\ref{prop.t-bis-to-circ-bis-stronger}, we obtain $(\M,s)\bis_\circ(\M',s')$.

\textbf{As for the direct proof, refer to~\cite{Bakhtiarietal:2017} for the condition and proof details.}
\end{proof}
}

\weg{\section{Combining logics of unknown truths and false belief}

When combining $\mathcal{L}(\bullet)$ and $\mathcal{L}(W)$, we obtain a bimodal logic $\mathcal{L}(\bullet,W)$. We recall that the language $\mathcal{L}(\bullet,W)$ is defined as follows.
\[
\begin{array}{ll}
\phi::=&p\mid \neg\phi\mid \phi\land\phi\mid \bullet\phi\mid W\phi\\
\end{array}
\]

It turns out that $\mathcal{L}(\bullet,W)$ is equally expressive as $\mathcal{L}(\Box)$ over any class of neighborhood models. This extends the result in~\cite{Fan:2020}, where it is shown that the two logics are equally expressive over any class of relational models.
\begin{proposition}
$\mathcal{L}(\bullet,W)$ is equally expressive as $\mathcal{L}(\Box)$ on any class of neighborhood models.
\end{proposition}

\begin{proof}
Since $\vDash\bullet\phi\lra \phi\land\neg\Box\phi$ and $\vDash W\phi\lra \Box\phi\land\neg\phi$, $\mathcal{L}(\Box)$ is at least as expressive as $\mathcal{L}(\bullet,W)$.

Moreover, we demonstrate that $\vDash \Box\phi\lra W\phi\vee(\circ\phi\land\phi)$, as follows. Given any neighborhood model $\M=\lr{S,N,V}$ and $s\in S$, we have the following equivalences:
\[\begin{array}{ll}
&\M,s\vDash W\phi\vee(\circ\phi\land\phi)\\
\iff &\M,s\vDash W\phi\text{ or }\M,s\vDash \circ\phi\land\phi\\
\iff &(\phi^\M\in N(s)\text{ and }\M,s\nvDash\phi)\text{ or }(\M,s\vDash\circ\phi\text{ and }\M,s\vDash\phi)\\
\iff &(\phi^\M\in N(s)\text{ and }\M,s\nvDash\phi)\text{ or }((\M,s\vDash\phi\text{ implies }\phi^\M\in N(s))\text{ and }\M,s\vDash\phi)\\
\iff &(\phi^\M\in N(s)\text{ and }\M,s\nvDash\phi)\text{ or }(\M,s\vDash\phi\text{ and }\phi^\M\in N(s))\\
\iff &\phi^M\in N(s)\\
\iff &\M,s\vDash\Box\phi.
\end{array}\]

This implies that $\mathcal{L}(\bullet,W)$ is at least as expressive as $\mathcal{L}(\Box)$, and therefore $\mathcal{L}(\bullet,W)$ is equally expressive as $\mathcal{L}(\Box)$ on any class of neighborhood models.
\end{proof}

\subsection{Axiomatizations}

\[\begin{array}{ll}
\texttt{PL}& \text{All instances of propositional tautologies}\\
\circ\texttt{N}&\circ\top\\
\circ\texttt{E}&\bullet\phi\to\phi\\
\texttt{WE}& W\phi\to \neg\phi\\
\circ\texttt{WM}& W(\phi\land\psi)\vee (\circ(\phi\land\psi)\land\phi\land\psi)\to W\phi\vee(\circ\phi\land\phi)\\
\texttt{C}& W\phi\land W\psi\to W(\phi\land\psi)\\
\texttt{MP}&\dfrac{\phi,\phi\to\psi}{\psi}\\
\texttt{REW}\circ&\dfrac{\phi\lra\psi}{W\phi\vee(\circ\phi\land\phi)\lra W\psi\vee(\circ\psi\land\psi)}\\
\end{array}\]

\begin{definition}
For any extension $\Lambda$ of ${\bf EW\circ}$, we define its canonical model $\M^\Lambda=\lr{S^\Lambda,N^\Lambda,V^\Lambda}$, where
\begin{itemize}
\item $N^\Lambda(s)=\{|\phi|\mid W\phi\vee(\circ\phi\land\phi)\in s\}$,
\end{itemize}
\end{definition}

${\bf ECW\circ}={\bf ECW}+{\bf EC^\circ}+\texttt{W}\circ$, where $\texttt{W}\circ$ denotes $W\phi\land\circ\psi\land\psi\to W(\phi\land\psi)$.
\begin{proposition}
$N^\Lambda(s)$ is closed under intersection.
\end{proposition}

\begin{proof}
Suppose that $X,Y\in N^\Lambda(s)$. Then there are $\phi,\psi$ such that $|\phi|=X\in N^\Lambda(s)$ and $|\psi|=Y\in N^\Lambda(s)$. Then $W\phi\vee(\circ\phi\land\phi)\in s$ and $W\psi\vee(\circ\psi\land\psi)\in s$. Then we consider four cases.
\begin{itemize}
\item $W\phi\in s$ and $W\psi\in s$.
\item $W\phi\in s$ and $\circ\psi\land\psi\in s$. By axiom $\texttt{W}\circ$, we obtain $W(\phi\land\psi)\in s$, and then $|\phi\land\psi|\in N^\Lambda(s)$, thus $X\cap Y\in N^\Lambda(s)$.
\item $\circ\phi\land\phi\in s$ and $W\psi\in s$. Then similar to the second case, we can show that $X\cap Y\in N^\Lambda(s)$.
\item $\circ\phi\land\phi\in s$ and $\circ\psi\land\psi\in s$.
\end{itemize}
\end{proof}

${\bf ENW\circ}={\bf EW}+{\bf EN^\circ}$.
\begin{proposition}
$N^\Lambda(s)$ contains the unit.
\end{proposition}

\begin{proof}
Follows immediately from $\vdash\circ\top$.
\end{proof}}

\section{Adding public announcements}\label{sec.publicannouncements}

Now we extend the previous results to the dynamic case: public announcements. Syntactically, we add the construct $[\phi]\phi$ into the previous languages, where the formula $[\psi]\phi$ is read ``$\phi$ is the case after each truthfully public announcement of $\psi$''. Semantically, we adopt the intersection semantics proposed in~\cite{ma2013update}. In details, given a monotone neighborhood model $\M=\lr{S,N,V}$ and a state $s\in S$,
\[\begin{array}{lll}
\M,s\vDash[\psi]\phi&\iff & \M,s\vDash\psi\text{ implies }\M^{\cap\psi},s\vDash\phi\\
\end{array}\]
where $\M^{\cap\psi}$ is the intersection submodel $\M^{\cap\psi^\M}$, and the notion of intersection submodels is defined as below.


\begin{definition}\cite[Def.~3]{ma2013update}
Let $\M=\lr{S,N,V}$ be a monotone neighborhood model, and $X$ is a nonempty subset of $S$. Define the intersection submodel $\M^{\cap X}=\lr{X,N^{\cap X},V^X}$ induced from $X$, where
\begin{itemize}
\item $N^{\cap X}(s)=\{P\cap X\mid P\in N(s)\}$ for every $s\in X$,
\item $V^X(p)=V(p)\cap X$ for every $p\in\BP$.
\end{itemize}
\end{definition}

\begin{proposition}\cite[Prop.~2]{ma2013update}
The frame property $(m)$ is preserved under taking the intersection submodel. That is, if $\M$ is a monotone neighborhood model with the domain $S$, then for any $X\subseteq S$, the intersection submodel $\M^{\cap X}$ is also monotone.
\end{proposition}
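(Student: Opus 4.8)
The plan is to prove monotonicity of $\M^{\cap X}$ directly from the definition by a \emph{witness-lifting} argument: every neighborhood of $s$ in the submodel is, by construction, the trace $P\cap X$ of some neighborhood $P$ of $s$ in $\M$, and I will enlarge that witness so that its trace becomes any prescribed superset. First I would fix $s\in X$ and, to verify $(m)$ for $\M^{\cap X}$, suppose $A\in N^{\cap X}(s)$ together with $A\subseteq B$, where $B$ is a subset of the submodel's domain, i.e.\ $B\subseteq X$. The goal is to show $B\in N^{\cap X}(s)$. Unpacking the definition of $N^{\cap X}$, there is some $P\in N(s)$ with $A=P\cap X$.

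The key step is to exhibit a single neighborhood of $s$ in $\M$ whose trace on $X$ is exactly $B$. I would take $Q=P\cup B$. Since $P\subseteq Q$ and $\M$ is monotone, $Q\in N(s)$. It then remains to compute the trace:
\[
Q\cap X=(P\cup B)\cap X=(P\cap X)\cup(B\cap X)=A\cup B=B,
\]
where the third equality uses $B\subseteq X$ (so $B\cap X=B$) and the last uses the hypothesis $A\subseteq B$ (so $A\cup B=B$). Hence $B=Q\cap X$ with $Q\in N(s)$, which by definition of $N^{\cap X}$ gives $B\in N^{\cap X}(s)$, establishing $(m)$ for $\M^{\cap X}$.

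The only point requiring care — and what I would flag as the ``obstacle'', though it is mild — is the set-theoretic identity $Q\cap X=B$: it is not automatic and genuinely consumes both hypotheses, namely $B\subseteq X$ (to discard the intersection with $X$) and $A\subseteq B$ (to absorb the residual piece $A=P\cap X$). Choosing the witness as $P\cup B$ rather than, say, $B$ itself is what makes monotonicity of $\M$ applicable; one cannot simply assert $B\in N(s)$, since $B$ need not be a neighborhood in $\M$. Everything else is routine, so I expect the whole argument to occupy only a few lines once the witness $P\cup B$ is chosen.
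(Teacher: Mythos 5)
Your proof is correct: the witness $Q=P\cup B$ is a neighborhood of $s$ in $\M$ by monotonicity, and the computation $Q\cap X=B$ (using $B\subseteq X$ and $A\subseteq B$) is exactly what is needed to conclude $B\in N^{\cap X}(s)$. Note that the paper does not prove this proposition at all --- it imports it by citation from the literature --- and your witness-lifting argument is precisely the standard proof of that cited result, so there is nothing to add.
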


We obtain the following reduction axioms for $\mathcal{L}(\bullet, W)$ and its sublanguages $\mathcal{L}(\bullet)$, $\mathcal{L}(W)$.
\[\begin{array}{lllll}
\texttt{AP} & [\psi]p\lra (\psi\to p)&&\texttt{AA}&[\psi][\chi]\phi\lra [\psi\land[\psi]\chi]\phi\\
\texttt{AN} & [\psi]\neg\phi\lra (\psi\to\neg[\psi]\phi)&&\texttt{A}\bullet&[\psi]\bullet\phi\lra (\psi\to\bullet[\psi]\phi)\\
\texttt{AC} & [\psi](\phi\land\chi)\lra ([\psi]\phi\land[\psi]\chi)&& \texttt{AW}&[\psi]W\phi\lra (\psi\to W[\psi]\phi)\\
\end{array}\]

From the reduction axioms, we can see that, every formula of $\mathcal{L}(\bullet, W)$ (and thus its sublanguages) with public announcement operators can be rewritten as a formula without public announcements via finite many of steps. Thus the addition of public announcements does not increase the expressivity of the languages in question. Moreover,

\begin{theorem}
Let $\Lambda$ be a system of $\mathcal{L}(\bullet)$ (resp. $\mathcal{L}(W)$, $\mathcal{L}(\bullet,W)$). If $\Lambda$ is sound and strongly complete with respect to the class of monotone neighborhood frames, then so is $\Lambda$ plus $\texttt{AP}$, $\texttt{AN}$, $\texttt{AC}$, $\texttt{AA}$ and $\texttt{A}\bullet$ (resp. plus $\texttt{AP}$, $\texttt{AN}$, $\texttt{AC}$, $\texttt{AA}$ and $\texttt{AW}$, plus $\texttt{AP}$, $\texttt{AN}$, $\texttt{AC}$, $\texttt{AA}$, $\texttt{A}\bullet$ and $\texttt{AW}$) under intersection semantics.
\end{theorem}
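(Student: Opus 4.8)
The plan is to prove this by the standard \emph{reduction} method of dynamic epistemic logic: first establish that each of the listed schemes is valid under intersection semantics over monotone frames, so that together with the assumed soundness of $\Lambda$ the extended system is sound; then use these schemes as rewrite rules to translate every formula containing announcement operators into a provably and semantically equivalent announcement-free formula, thereby reducing strong completeness of the extended system to the assumed strong completeness of $\Lambda$ on the static fragment.

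For soundness I would verify the six axioms separately. The Boolean ones $\texttt{AP}$, $\texttt{AN}$, $\texttt{AC}$ and the composition axiom $\texttt{AA}$ are the usual public-announcement reductions; their validity is a routine unfolding of the clause $\M,s\vDash[\psi]\phi\iff(\M,s\vDash\psi\text{ implies }\M^{\cap\psi},s\vDash\phi)$ together with the identity $(\M^{\cap\psi})^{\cap\chi}=\M^{\cap(\psi\land[\psi]\chi)}$ for $\texttt{AA}$, and none of these uses monotonicity. The substantive cases are $\texttt{A}\bullet$ and $\texttt{AW}$. Here I would first record the bridging facts, writing $X=\psi^{\M}$: for every $\phi$ one has $\phi^{\M^{\cap X}}=([\psi]\phi)^{\M}\cap X$, and moreover $S\setminus X\subseteq([\psi]\phi)^{\M}$ since $[\psi]\phi$ holds vacuously off $X$. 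The crux is then a neighborhood lemma valid precisely because $N$ is monotone: for every $s\in X$ and every $Y$ with $S\setminus X\subseteq Y$, we have $Y\cap X\in N^{\cap X}(s)\iff Y\in N(s)$. The direction $\Leftarrow$ is immediate from the definition of $N^{\cap X}$ (take the witness $P=Y$); for $\Rightarrow$, a witness $P\in N(s)$ with $P\cap X=Y\cap X$ is closed upward to $P\cup(S\setminus X)$, which lies in $N(s)$ by $(m)$ and equals $Y$ since $S\setminus X\subseteq Y$. Applying this lemma with $Y=([\psi]\phi)^{\M}$ turns the submodel conditions ``$\phi^{\M^{\cap X}}\notin N^{\cap X}(s)$'' and ``$\phi^{\M^{\cap X}}\in N^{\cap X}(s)$'' into ``$([\psi]\phi)^{\M}\notin N(s)$'' and ``$([\psi]\phi)^{\M}\in N(s)$'', which, combined with the membership clauses about $s$, yield exactly $\bullet[\psi]\phi$ and $W[\psi]\phi$ under the hypothesis $\M,s\vDash\psi$; this is $\texttt{A}\bullet$ and $\texttt{AW}$.

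For completeness I would define a translation $t$ that drives announcements inward using the six axioms and eliminates them, equipped with a complexity measure (in the style of van Ditmarsch, van der Hoek and Kooi) for which each rewrite strictly decreases the measure, the role of $\texttt{AA}$ being to reduce an announcement lying in the scope of another. Two facts are then needed. Semantically, $\M,s\vDash\phi\iff\M,s\vDash t(\phi)$ for every monotone $\M$, proved by induction on the measure using the validities from the soundness part. Syntactically, $\vdash\phi\lra t(\phi)$ in the extended system; this requires applying the reduction axioms to subformulas, i.e. a replacement-of-equivalents property for all contexts including the announcement context (from $\vdash\alpha\lra\beta$ infer $\vdash[\chi]\alpha\lra[\chi]\beta$ and $\vdash[\alpha]\chi\lra[\beta]\chi$), which I would establish as an auxiliary lemma from the $\texttt{RE}$-rules of $\Lambda$ and the reduction axioms. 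Granting both, strong completeness transfers: a set $\Gamma$ is consistent in the extended system iff $t[\Gamma]$ is $\Lambda$-consistent (using $\vdash\gamma\lra t(\gamma)$ and the inclusion of $\Lambda$ in the extended system), and since $t[\Gamma]$ is announcement-free the assumed strong completeness of $\Lambda$ yields a monotone model satisfying $t[\Gamma]$, which by the formula-wise semantic equivalence $\gamma\equiv t(\gamma)$ also satisfies $\Gamma$.

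The main obstacle is the soundness of $\texttt{A}\bullet$ and $\texttt{AW}$, and specifically the backward direction of the neighborhood lemma above: it is exactly the point at which the restriction to monotone frames is indispensable, since without closure under supersets a witness $P$ for $Y\cap X\in N^{\cap X}(s)$ need not certify $Y\in N(s)$. A secondary technical point is the replacement-of-equivalents lemma for the announcement modality, needed to upgrade the semantic reduction to a provable one; once that congruence is in place, the rest is the standard bookkeeping of the reduction method.
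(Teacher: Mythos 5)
Your proposal is correct and takes essentially the same approach as the paper: the paper likewise reduces everything to verifying the validity of $\texttt{A}\bullet$ and $\texttt{AW}$ over monotone models (citing the literature for the remaining reduction axioms and for the standard reduction method yielding completeness), and its inline verification is exactly your neighborhood lemma, i.e.\ the identity $\phi^{\M^{\cap\psi}}=([\psi]\phi)^{\M}\cap\psi^{\M}$ together with closing a witness $P\in N(s)$ upward to $P\cup(S\setminus\psi^{\M})=([\psi]\phi)^{\M}$ by $(m)$. The only difference is presentational: you isolate this as a lemma and observe that the direction from $N(s)$ to $N^{\cap\psi}(s)$ needs no monotonicity, whereas the paper phrases that direction via an inclusion and an appeal to monotonicity of the intersection submodel.
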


\begin{proof}
We only need to show the validity of $\texttt{A}\bullet$ and $\texttt{AW}$. The proof for the validity of other reduction axioms can be found in~\cite[Thm.~1]{ma2013update}. This then will give us the soundness. Moreover, the completeness can be shown via a standard reduction method, see~\cite{hvdetal.del:2007}. Let $\M=\lr{S,N,V}$ be any monotone neighborhood model and $s\in S$.

For $\texttt{A}\bullet$:

Suppose that $\M,s\vDash [\psi]\bullet\phi$ and $\M,s\vDash\psi$, to show that $\M,s\vDash\bullet[\psi]\phi$, that is to show $\M,s\vDash[\psi]\phi$ and $([\psi]\phi)^\M\notin N(s)$. By supposition, we have $\M^{\cap \psi},s\vDash\bullet\phi$, then $\M^{\cap\psi},s\vDash\phi$ and $\phi^{\M^{\cap\psi}}\notin N^{\cap \psi}(s)$. From $\M^{\cap\psi},s\vDash\phi$ it follows that $\M,s\vDash[\psi]\phi$. We have also $([\psi]\phi)^\M\notin N(s)$: if not, namely $([\psi]\phi)^\M\in N(s)$, then $([\psi]\phi)^\M\cap \psi^\M\in N^{\cap\psi}(s)$. Since $([\psi]\phi)^\M\cap \psi^\M\subseteq \phi^{\M^{\cap\psi}}$, by $(m)$, we derive that $\phi^{\M^{\cap\psi}}\in N^{\cap \psi}(s)$: a contradiction.

Conversely, assume that $\M,s\vDash \psi\to \bullet[\psi]\phi$, to prove that $\M,s\vDash[\psi]\bullet\phi$. For this, suppose that $\M,s\vDash\psi$, it remains to show that $\M^{\cap\psi},s\vDash\bullet\phi$, equivalently, $\M^{\cap\psi},s\vDash\phi$ and $\phi^{\M^{\cap\psi}}\notin N^{\cap\psi}(s)$. By assumption and supposition, we obtain that $\M,s\vDash\bullet[\psi]\phi$, then $\M,s\vDash[\psi]\phi$ and $([\psi]\phi)^\M\notin N(s)$. From $\M,s\vDash[\psi]\phi$ and $\M,s\vDash\psi$, it follows that $\M^{\cap\psi},s\vDash\phi$. Moreover, $\phi^{\M^{\cap\psi}}\notin N^{\cap\psi}(s)$: otherwise, $\phi^{\M^{\cap\psi}}=P\cap \psi^\M$ for some $P\in N(s)$, and then $P\subseteq (S\backslash \psi^\M)\cup\phi^{\M^{\cap\psi}}$, and thus by $(m)$, we infer that $(S\backslash \psi^\M)\cup\phi^{\M^{\cap\psi}}\in N(s)$, that is, $([\psi]\phi)^\M\in N(s)$: a contradiction.

\medskip

Now for $\texttt{AW}$:

Suppose that $\M,s\vDash[\psi]W\phi$ and $\M,s\vDash\psi$, to show that $\M,s\vDash W[\psi]\phi$, that is to show $([\psi]\phi)^\M\in N(s)$ and $\M,s\nvDash[\psi]\phi$. By supposition, we derive that $\M^{\cap\psi},s\vDash W\phi$, that is, $\phi^{\M^{\cap\psi}}\in N^{{\cap\psi}}(s)$ and $\M^{\cap\psi},s\nvDash \phi$. From $\phi^{\M^{\cap\psi}}\in N^{{\cap\psi}}(s)$, it follows that $\phi^{\M^{\cap\psi}}=P\cap \psi^\M$ for some $P\in N(s)$, and then $P\subseteq (S\backslash \psi^\M)\cup\phi^{\M^{\cap\psi}}$. By $(m)$, we get $(S\backslash \psi^\M)\cup\phi^{\M^{\cap\psi}}\in N(s)$, that is, $([\psi]\phi)^\M\in N(s)$. Moreover, from $\M,s\vDash\psi$ and $\M^{\cap\psi},s\nvDash \phi$, it follows immediately that $\M,s\nvDash[\psi]\phi$.

Conversely, assume that $\M,s\vDash\psi\to W[\psi]\phi$, to prove that $\M,s\vDash[\psi]W\phi$. For this, suppose that $\M,s\vDash\psi$, it suffices to demonstrate that $\M^{\cap\psi},s\vDash W\phi$, which means that $\phi^{\M^{\cap\psi}}\in N^{\cap\psi}(s)$ and $\M^{\cap\psi},s\nvDash\phi$. By assumption and supposition, we derive that $\M,s\vDash W[\psi]\phi$. This entails that $([\psi]\phi)^\M\in N(s)$ and $\M,s\nvDash[\psi]\phi$. From $([\psi]\phi)^\M\in N(s)$ it follows that $([\psi]\phi)^\M\cap \psi^\M\in N^{\cap\psi}(s)$. As $([\psi]\phi)^\M\cap \psi^\M\subseteq \phi^{\M^{\cap\psi}}$, by $(m)$, we gain $\phi^{\M^{\cap\psi}}\in N^{\cap\psi}(s)$. Besides, from $\M,s\nvDash[\psi]\phi$, it follows directly that $\M^{\cap\psi},s\nvDash\phi$, as desired.
\end{proof}

\weg{Similar to the case in relational semantics, we can apply the reduction axioms to Moore sentences. More precisely, we can show that Moore sentences are unsuccessful and self-refuting, and their negations are all successful.
\begin{proposition}
$[\bullet p]\neg\bullet p$ and $[\neg\bullet p]\neg\bullet p$ are provable.
\end{proposition}

\begin{proof}
Similar to the proof of~\cite[Prop.~38, Prop.~39]{Fan:2019}, because we have all necessary axioms.
\end{proof}}

For the sake of simplicity, we use ${\bf M^{\circ[\cdot]}}$ for the system that consists of ${\bf M^\circ}$ plus the above reduction axioms involving $\bullet$, and ${\bf M^{W[\cdot]}}$ for the system that consists of ${\bf M^W}$ plus the above reduction axioms involving $W$.

It is shown in~\cite[Prop.~38]{Fan:2019} that Moore sentences are unsuccessful and self-refuting, that is, $[\bullet p]\neg\bullet p$ is provable in ${\bf K^{\bullet[\cdot]}}$ (namely, the minimal Kripke logic of $\mathcal{L}(\bullet)$ plus the above reduction axioms involving $\bullet$). However, this does not apply to the monotone case.

\begin{proposition}
$[\bullet p]\neg\bullet p$ is {\em not} provable in ${\bf M^{\circ[\cdot]}}$.
\end{proposition}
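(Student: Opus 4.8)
The plan is to argue by soundness. By the theorem just established, the system ${\bf M^{\circ[\cdot]}}$ (that is, ${\bf M^\circ}$ together with $\texttt{AP}$, $\texttt{AN}$, $\texttt{AC}$, $\texttt{AA}$ and $\texttt{A}\bullet$) is sound with respect to the class of monotone neighborhood frames under the intersection semantics, because ${\bf M^\circ}$ is itself sound and strongly complete with respect to $(m)$-frames (Thm.~\ref{thm.em-comp}). Hence it suffices to produce a single monotone pointed model $(\M,s)$ at which $[\bullet p]\neg\bullet p$ fails; non-validity then yields non-provability.

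Unfolding the intersection semantics, $\M,s\nvDash[\bullet p]\neg\bullet p$ amounts to $\M,s\vDash\bullet p$ (so that the announcement is truthful) together with $\M^{\cap\bullet p},s\vDash\bullet p$ (so that, unlike the relational case, $\bullet p$ is \emph{not} refuted by its own announcement). First I would record the structural observation that drives everything: since $(\bullet p)^\M\subseteq p^\M$, the set $X:=(\bullet p)^\M$ satisfies $p^\M\cap X=X$, so in the intersection submodel $\M^{\cap X}$ the proposition $p$ is true throughout the entire domain $X$, i.e. $p^{\M^{\cap X}}=X$. Consequently $\M^{\cap X},s\vDash\bullet p$ holds precisely when $X$ fails to be a neighborhood of $s$ in the submodel, equivalently when no $P\in N(s)$ satisfies $X\subseteq P$.

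With this in hand the construction is immediate: take $S=\{s\}$ with $V(p)=\{s\}$ and $N(s)=\emptyset$ (or, if a less degenerate witness is preferred, any monotone model in which $N(s)$ contains no superset of $(\bullet p)^\M$). This $N$ is vacuously monotone; since $p^\M=\{s\}\notin N(s)$ and $s\in p^\M$ we get $\M,s\vDash\bullet p$, whence $X=(\bullet p)^\M=\{s\}$. In the submodel $N^{\cap X}(s)=\{P\cap X\mid P\in N(s)\}=\emptyset$, so $p^{\M^{\cap X}}=\{s\}\notin N^{\cap X}(s)$ and therefore $\M^{\cap X},s\vDash\bullet p$. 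This gives $\M,s\nvDash[\bullet p]\neg\bullet p$, and soundness finishes the argument.

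I expect the only genuine subtlety to be the observation that the announcement of $\bullet p$ makes $p$ globally true in the surviving submodel; this is exactly why self-refutation, which is forced in the relational setting by the presence of the unit $(n)$, can be blocked here. Since ${\bf M^\circ}$ imposes only monotonicity and not $(n)$, we are free to choose $N(s)$ with no superset of the surviving domain, and that single degree of freedom is what separates the monotone neighborhood case from the relational one.
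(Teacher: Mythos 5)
Your proof is correct, but it takes a genuinely different route from the paper's. The paper argues syntactically first: using $\texttt{AN}$, $\texttt{A}\bullet$, $\texttt{AP}$ and $\texttt{PL}$ it reduces $[\bullet p]\neg\bullet p$ inside ${\bf M^{\circ[\cdot]}}$ to the announcement-free formula $\bullet p\to\neg\bullet(\bullet p\to p)$, and then refutes that static formula on a monotone model, invoking the soundness/completeness of the static system ${\bf M^\circ}$ (Thm.~\ref{thm.em-comp}); no intersection submodel is ever computed. You instead work entirely semantically: you invoke the soundness half of the dynamic completeness theorem for ${\bf M^{\circ[\cdot]}}$ under intersection semantics (legitimately, since ${\bf M^\circ}$ is sound and strongly complete w.r.t.\ $(m)$-frames, which is exactly the hypothesis that theorem requires) and refute $[\bullet p]\neg\bullet p$ directly by computing the submodel induced by announcing $\bullet p$. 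Amusingly, both arguments land on the identical countermodel $S=\{s\}$, $N(s)=\emptyset$, $V(p)=\{s\}$ --- no accident, since by soundness of the reduction axioms the two formulas are equivalent over monotone models. What your route buys is the conceptual explanation of why self-refutation fails: announcing $\bullet p$ makes $p$ true throughout the surviving domain $X=(\bullet p)^\M$, so $\bullet p$ survives exactly when no $P\in N(s)$ contains $X$, a situation that monotonicity alone cannot exclude (whereas the unit property $(n)$ would). What the paper's route buys is a precise syntactic localisation of the failure: it identifies the static principle $\bullet p\to\neg\bullet(\bullet p\to p)$ as the culprit, which feeds directly into the subsequent remark that in ${\bf EN^\circ}$ (where that principle becomes provable) the Moore sentence is self-refuting after all. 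One small caveat on your parenthetical ``less degenerate witness'': requiring merely that $N(s)$ contain no superset of $(\bullet p)^\M$ is not quite enough, since one also needs $\M,s\vDash\bullet p$ to hold in the first place (i.e.\ $s\in p^\M$ and $p^\M\notin N(s)$) for the announcement to be truthful; your actual construction satisfies this, so the main argument stands.
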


\begin{proof}
We have the following proof sequences:
\[
\begin{array}{llll}
[\bullet p]\neg\bullet p&\lra & (\bullet p\to \neg[\bullet p]\bullet p)& \texttt{AN}\\
&\lra & (\bullet p\to \neg(\bullet p\to\bullet [\bullet p]p)) &\texttt{A}\bullet\\
&\lra & (\bullet p\to \neg(\bullet p\to\bullet (\bullet p\to p))) &\texttt{AP}\\
&\lra & (\bullet p\to \neg\bullet (\bullet p\to p)) & \texttt{PL}\\
\end{array}
\]

Thus we only need to show the unprovability of $\bullet p\to \neg\bullet (\bullet p\to p)$ in ${\bf M^\circ}$. By completeness of ${\bf M^\circ}$, it remains to show that this formula is not valid over the class of $(m)$-frames. To see this, just consider an $(m)$-model $\M=\lr{S,N,V}$, where $S=\{s\}$, $N(s)=\emptyset$, and $V(p)=\{s\}$. It is easy to see that $\M,s\vDash p$ and $p^\M\notin N(s)$, thus $\M,s\vDash \bullet p$. Moreover, $\M,s\vDash\bullet p\to p$ and $(\bullet p\to p)^\M\notin N(s)$, and hence $\M,s\vDash\bullet (\bullet p\to p)$, and therefore $\M,s\nvDash\bullet p\to \neg\bullet (\bullet p\to p)$. Also, $\M$ possesses $(m)$. This establishes the required result.
\end{proof}

One may show that $[\bullet p]\neg\bullet p$ is provable in ${\bf EN^\circ}$  plus the reduction axioms for $\bullet$ operator, since in ${\bf EN^\circ}$, $\bullet p\to \neg\bullet (\bullet p\to p)$ is provable, whose proof is similar as in~\cite[Prop.~38]{Fan:2019} (note that $\circ\top$ is interderivable with the rule $\dfrac{\phi}{\circ\phi}$ in the presence of the rule $\texttt{RE}\circ$).

Similar to the case in the minimal Kripke logic for $\mathcal{L}(\bullet)$, in ${\bf M^{\circ[\cdot]}}$, the negations of Moore sentences are all successful formulas.
\begin{proposition}
$[\neg\bullet p]\neg \bullet p$ is provable in ${\bf M^{\circ[\cdot]}}$.
\end{proposition}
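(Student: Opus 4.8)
The plan is to eliminate the announcement operator from $[\neg\bullet p]\neg\bullet p$ by repeated application of the reduction axioms, exactly as in the preceding proposition, and then to prove that the resulting announcement-free formula is a theorem of ${\bf M^\circ}$. Since the completeness of the dynamic system reduces to that of ${\bf M^\circ}$ through the reduction axioms (each formula is provably equivalent to an announcement-free one), it suffices to compute the normal form of $[\neg\bullet p]\neg\bullet p$ and derive it in ${\bf M^\circ}$. No nested announcements occur, so I expect not to need $\texttt{AA}$.

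First I would apply $\texttt{AN}$ (outer announcement $\neg\bullet p$, inner formula $\bullet p$) to get $[\neg\bullet p]\neg\bullet p\lra(\neg\bullet p\to\neg[\neg\bullet p]\bullet p)$. Then $\texttt{A}\bullet$ yields $[\neg\bullet p]\bullet p\lra(\neg\bullet p\to\bullet[\neg\bullet p]p)$, and $\texttt{AP}$ gives $[\neg\bullet p]p\lra(\neg\bullet p\to p)$; replacing the latter inside $\bullet$ by $\texttt{RE}\circ$ (legitimate since $\bullet\psi$ is just $\neg\circ\psi$) produces $[\neg\bullet p]\bullet p\lra(\neg\bullet p\to\bullet(\neg\bullet p\to p))$. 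Substituting back and simplifying propositionally (using $A\to(A\land B)\equiv A\to B$) collapses the whole formula to the announcement-free $\neg\bullet p\to\neg\bullet(\neg\bullet p\to p)$.

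The crucial step is then to simplify the inner disturbance $\neg\bullet p\to p$. Writing $\circ p$ for $\neg\bullet p$, the axiom $\circ\texttt{E}$ is precisely $\bullet p\to p$, i.e. $\neg\circ p\to p$, which is propositionally equivalent to $\circ p\lor p$, hence to $(\circ p\to p)\to p$; together with the trivial $p\to(\circ p\to p)$ this gives $\vdash(\neg\bullet p\to p)\lra p$. Applying $\texttt{RE}\circ$ once more (as $\bullet$-replacement) turns $\neg\bullet(\neg\bullet p\to p)$ into $\neg\bullet p$, so the target becomes $\neg\bullet p\to\neg\bullet p$, a propositional tautology, and we are done.

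The conceptual point to get right — rather than a genuine obstacle — is the contrast with the Moore-sentence case: there the inner formula $\bullet p\to p$ is valid and collapses to $\top$, forcing the normal form $\bullet p\to\circ\top$, which requires $\circ\texttt{N}$ and therefore fails in ${\bf M^\circ}$. Here the inner formula $\neg\bullet p\to p$ instead collapses to $p$ via $\circ\texttt{E}$, so no further neighborhood axiom is needed. In fact monotonicity $\circ\texttt{M}$ is never invoked, so the same argument shows the stronger fact that $[\neg\bullet p]\neg\bullet p$ is already provable in ${\bf E^\circ}$ augmented with the reduction axioms for $\bullet$; I would nonetheless state the result for ${\bf M^{\circ[\cdot]}}$, since monotonicity is precisely what guarantees the soundness of the intersection semantics (and hence of the reduction axioms) in the first place. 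Beyond the careful bookkeeping of the reductions, there is no real difficulty.
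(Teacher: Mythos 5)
Your proposal is correct, and the reduction-axiom phase coincides with the paper's: both of you eliminate announcements via $\texttt{AN}$, $\texttt{A}\bullet$, $\texttt{AP}$ and propositional simplification to reach the announcement-free target $\neg\bullet p\to\neg\bullet(\neg\bullet p\to p)$, i.e.\ $\circ p\to\circ(\circ p\to p)$. Where you genuinely diverge is in the static derivation. The paper proves $\circ p\to\circ(\circ p\to p)$ by a case split on $p$: the case $\circ p\land p$ is handled by applying the derived rule $\texttt{RM}\circ$ to the tautology $p\to(\circ p\to p)$ --- and $\texttt{RM}\circ$ rests on the monotonicity axiom $\circ\texttt{M}$ --- while the case $\circ p\land\neg p$ is dispatched by contraposition as an instance of $\circ\texttt{E}$. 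You instead observe that $(\circ p\to p)\lra p$ is already provable from the single instance $\bullet p\to p$ of $\circ\texttt{E}$ by pure propositional reasoning, and then one application of $\texttt{RE}\circ$ collapses $\circ(\circ p\to p)$ to $\circ p$, reducing the target to a tautology. This is both more elementary and sharper: it shows the formula is provable in ${\bf E^\circ}$ plus the reduction axioms, with no appeal to monotonicity, whereas the paper's derivation genuinely invokes $\circ\texttt{M}$. Your accompanying caveat is also exactly right: the syntactic strengthening does not make the statement about ${\bf M^{\circ[\cdot]}}$ artificial, since monotonicity is what makes the intersection semantics (and hence the reduction axioms) sound, so ${\bf M^{\circ[\cdot]}}$ remains the natural system in which to state the result. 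Your diagnosis of the contrast with the Moore sentence $[\bullet p]\neg\bullet p$ --- where the inner formula collapses to $\top$ and one is forced into $\bullet p\to\circ\top$, requiring $\circ\texttt{N}$ --- likewise matches the paper's discussion of why that formula fails in ${\bf M^{\circ[\cdot]}}$ but is provable once $\circ\texttt{N}$ is added.
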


\begin{proof}
The proof is similar to that of~\cite[Prop.~39]{Fan:2019} except that we are now in the much weaker system. In this system, we have the following proof sequences:
\[
\begin{array}{llll}
[\neg\bullet p]\neg \bullet p&\lra &(\neg \bullet p\to \neg[\neg\bullet p]\bullet p)&\texttt{AN}\\
&\lra & (\neg \bullet p\to \neg (\neg \bullet p\to \bullet [\neg \bullet p]p)) &\texttt{A}\bullet\\
&\lra & (\neg \bullet p\to \neg (\neg \bullet p\to \bullet (\neg \bullet p\to p))) & \texttt{AP}\\
&\lra & (\neg \bullet p\to \neg \bullet (\neg \bullet p\to p)) & \texttt{PL}\\
&\lra & (\circ p\to \circ(\circ p\to p)) &\text{Def.~of~}\circ\\
\end{array}
\]

Notice that $\circ p\to \circ(\circ p\to p)$ is provable in ${\bf M^\circ}$. First, as $\vdash p\to (\circ p\to p)$, by rule $\texttt{RM}\circ$ (Prop.~\ref{prop.derivable-rm}), $\vdash \circ p\land p\to\circ(\circ p\to p)$. Moreover, $\vdash \circ p\land \neg p\to\circ(\circ p\to p)$: to see this, we consider its contraposition, that is, $\bullet (\circ p\to p)\to (\circ p\to p)$, which is just an instance of axiom $\circ\texttt{E}$.
\end{proof}

Interestingly, public announcements cannot change one's false belief about a fact. More precisely, if you have a false belief about $p$ and someone responds with ``you are wrong about $p$'', then you still have the false belief.
\begin{proposition}
$[Wp]Wp$ is provable in ${\bf M^{W[\cdot]}}$.
\end{proposition}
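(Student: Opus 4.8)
The plan is to eliminate the announcement operator in $[Wp]Wp$ by repeatedly applying the reduction axioms, reducing the claim to a purely static formula of $\mathcal{L}(W)$, and then to derive that static formula inside ${\bf M^W}$. First I would apply $\texttt{AW}$ with $\psi=Wp$ and $\phi=p$, which gives $\vdash [Wp]Wp\lra (Wp\to W[Wp]p)$. The inner announcement $[Wp]p$ is then removed by $\texttt{AP}$, yielding $\vdash [Wp]p\lra (Wp\to p)$. Feeding this equivalence into $\texttt{REW}$ produces $\vdash W[Wp]p\lra W(Wp\to p)$, and combining the three steps by propositional reasoning gives the reduction
\[
\vdash [Wp]Wp\lra (Wp\to W(Wp\to p)).
\]
So it suffices to show that $Wp\to W(Wp\to p)$ is provable in ${\bf M^W}$.

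For this static formula I would use the derived rule $\texttt{RMW}$, which is available in ${\bf M^W}$ by Prop.~\ref{prop.derivable-rmw}. Starting from the propositional tautology $\vdash p\to(Wp\to p)$, an application of $\texttt{RMW}$ (with $\phi=p$, $\psi=Wp\to p$) yields $\vdash (Wp\land\neg(Wp\to p))\to W(Wp\to p)$. Now I would simplify the antecedent propositionally: since $\neg(Wp\to p)$ is $Wp\land\neg p$, we have $Wp\land\neg(Wp\to p)\equiv Wp\land\neg p$, so $\vdash (Wp\land\neg p)\to W(Wp\to p)$. Finally, axiom $\texttt{WE}$ gives $\vdash Wp\to\neg p$, hence $\vdash Wp\to (Wp\land\neg p)$, and chaining the two implications delivers $\vdash Wp\to W(Wp\to p)$. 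Together with the reduction this proves $\vdash [Wp]Wp$.

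The conceptual heart of the argument, and the step I expect to be the main obstacle, is the derivation of $Wp\to W(Wp\to p)$: this is exactly where monotonicity is doing the work. Semantically, $Wp$ places $p^\M$ in $N(s)$, and because $p\to(Wp\to p)$ is valid we have $p^\M\subseteq (Wp\to p)^\M$, so $(m)$ lifts the neighborhood up to $(Wp\to p)^\M$; the ``false'' conjunct of $W$ is secured by $\texttt{WE}$ since $Wp$ already forces $\neg p$. Proof-theoretically this monotone lifting is precisely what $\texttt{RMW}$ packages, so the only real care needed is matching the tautology and performing the propositional simplification of the antecedent. Everything else is routine bookkeeping with the reduction axioms, and the completeness of ${\bf M^W}$ over $(m)$-frames could alternatively be invoked to justify the static step semantically if a syntactic derivation were to prove awkward.
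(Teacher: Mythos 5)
Your proposal is correct and follows essentially the same route as the paper's own proof: reduce $[Wp]Wp$ via $\texttt{AW}$ and $\texttt{AP}$ to the static formula $Wp\to W(Wp\to p)$, then derive that in ${\bf M^W}$ from the tautology $p\to(Wp\to p)$ by $\texttt{RMW}$ (Prop.~\ref{prop.derivable-rmw}), propositional simplification of the antecedent, and $\texttt{WE}$. The only (welcome) difference is that you make explicit the appeal to $\texttt{REW}$ when substituting $(Wp\to p)$ for $[Wp]p$ under the $W$ operator, a step the paper leaves implicit.
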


\begin{proof}
We observe the following proof sequences:
\[
\begin{array}{llll}
[Wp]Wp&\lra & (Wp\to W[Wp]p)&\texttt{AW}\\
&\lra & (Wp\to W(Wp\to p))&\texttt{AP}\\
\end{array}
\]

Moreover, $Wp\to W(Wp\to p)$ is provable in ${\bf M^W}$. To see this, note that $\vdash p\to (Wp\to p)$, then by rule $\texttt{RMW}$ (Prop.~\ref{prop.derivable-rmw}), we derive that $\vdash Wp\land\neg (Wp\to p)\to W(Wp\to p)$, that is, $\vdash Wp \land Wp\land\neg p\to W(Wp\to p)$. Now by $\texttt{WE}$, we obtain that $\vdash Wp\to W(Wp\to p)$.
\end{proof}


\section{Conclusion and Future work}\label{sec.conclusion}

In this paper, we investigated logics of unknown truths and false beliefs under neighborhood semantics. More precisely, we compared the relative expressivity of the two logics, proposed notions of $\bullet$-morphisms and $W$-morphisms with applications to frame definability, a general soundness and completeness result and some related results in the literature in a relative easy way, and axiomatized the two logics over various neighborhood frames, and finally, we extended the results to the case of public announcements, where by adopting the intersection semantics we found suitable reduction axioms and thus complete proof systems, which again has good applications to Moore sentence and some others.

An interesting question is to explore the notions of bisimulations for logics of unknown truths and false beliefs, for which notions of $\bullet$-morphisms and $W$-morphisms might give us some inspirations. Moreover, a related research direction would be neighborhood bimodal logics with contingency and accident.


\bibliographystyle{plain}
\bibliography{biblio2019}
\end{document}